\newcommand{\newcom}{\newcommand}
\newcom{\al}{\alpha}
\newcom{\be}{\beta}
\newcom{\eps}{\epsilon}
\newcom{\veps}{\varepsilon}
\newcom{\e}{\varepsilon}
\newcom{\ga}{\gamma}
\newcom{\Ga}{\Gamma}
\newcom{\ka}{\kappa}
\newcom{\Lam}{\Lambda}
\newcom{\lam}{\lambda}
\newcom{\Om}{\Omega}
\newcom{\om}{\omega}
\newcom{\Si}{\Sigma}
\newcom{\si}{\sigma}
\newcom{\tht}{\theta}
\newcom{\dtri}{\nabla}
\newcom{\tri}{\triangle}
\newcom{\oo}{\infty}
\newcom{\vphi}{\varphi}
\newcom{\nna}{\big\langle \nabla' \big\rangle}
\newcom{\aaa}{\mathfrak a}
\newcom{\aab}{\bar{\mathfrak a}}
\newcom{\aat}{\tilde{\mathfrak a}}
\newcom{\cB}{{\mathcal B}}
\newcom{\cC}{{\mathcal C}}
\newcom{\cD}{{\mathcal D}}
\newcom{\cF}{{\mathcal F}}
\newcom{\cH}{{\mathcal H}}
\newcom{\cL}{{\mathcal L}}
\newcom{\cM}{{\mathcal M}}
\newcom{\cN}{{\mathcal N}}
\newcom{\cP}{{\mathcal P}}
\newcom{\cS}{{\mathcal S}}
\newcom{\cQ}{{\mathcal Q}}
\newcom{\cT}{{\mathcal T}}
\newcom{\cY}{{\mathcal Y}}
\newcom{\cZ}{{\mathcal Z}}
\newcom{\R}{\mathbb R}
\newcom{\T}{\mathbb T}
\newcom{\bbT}{{\mathbb{T}}}
\newcom{\BT}{{\mathbb{T}}^2}
\newcom{\Z}{\mathbb Z}
\newcom{\C}{\mathbb C}
\newcom{\E}{\mathbb E}
\newcom{\hha}{\hat{\mathbf h}}
\newcom{\ha}{\hat{h}}
\newcom{\ul}{\underline}
\newcommand{\vc}[1]{{\mathbf #1}}
\newcom{\ve}{\vc{e}}
\newcom{\vN}{\vc{N}}
\newcom{\vn}{\vc{n}}
\newcom{\vG}{\vc{G}}
\newcom{\vF}{\vc{F}}
\newcom{\vf}{\vc{f}}
\newcom{\vg}{\vc{g}}
\newcom{\vq}{\vc{q}}
\newcom{\vu}{\vc{u}}
\newcom{\vv}{\vc{v}}
\newcom{\vw}{\vc{w}}
\newcom{\vb}{\vc{b}}
\newcom{\vh}{\vc{h}}
\newcom{\vz}{\vc{z}}
\newcom{\vup}{\vu^{+}}
\newcom{\vum}{\vu^{-}}
\newcom{\vvp}{\vv^{+}}
\newcom{\vvm}{\vv^{-}}
\newcom{\vbp}{\vb^{+}}
\newcom{\vbm}{\vb^{-}}
\newcom{\vhp}{\vh^{+}}
\newcom{\vhm}{\vh^{-}}
\newcom{\Omp}{{\Om^+}}
\newcom{\Omm}{{\Om^-}}
\newcom{\vupm}{{\vu^{\pm}}}
\newcom{\vvpm}{{\vv^{\pm}}}
\newcom{\vbpm}{{\vb^{\pm}}}
\newcom{\vhpm}{{\vh^{\pm}}}
\newcom{\vwp}{{\vc{w}^+}}
\newcom{\vwm}{{\vc{w}^-}}
\newcom{\vwpm}{{\vc{w}^{\pm}}}
\newcom{\Ompm}{{\Omega^{\pm}}}
\newcom{\vom}{\boldsymbol{\omega}}
\newcom{\vvap}{\vc{\varpi}}
\newcom{\vop}{\vom^{+}}
\newcom{\vnu}{\vc{\nu}}
\newcom{\vopm}{\vom^{\pm}}
\newcom{\vjp}{\vj^+}
\newcom{\vjm}{\vj^-}
\newcom{\vjpm}{\vj^{\pm}}
\newcom{\vj}{\boldsymbol{\xi}}
\newcom{\Ds} {\langle\nabla\rangle^{s-\f12}}
\newcom{\ds}{{\rm d} s}
\newcom{\f}{\frac}
\newcom{\di}{\displaystyle\int}
\newcom{\dl}{\displaystyle\lim}
\newcom{\ov}{\overline}
\newcom{\sset}{\subset}
\newcom{\wt}{\widetilde}
\newcom{\pa}{\partial}
\newcom{\p}{\partial}
\newcom\na{\nabla}
\newcom{\suml}{\sum\limits}
\newcom{\supl}{\sup\limits}
\newcom{\intl}{\int\limits}
\newcom{\infl}{\inf\limits}
\newcom{\disp}{\displaystyle}
\newcom{\non}{\nonumber}
\newcom{\no}{\noindent}
\newcom{\QED}{$\square$}
\def\div{\mathop{\rm div}\nolimits}
\def\curl{\mathop{\rm curl}\nolimits}
\def\eqdefa{\buildrel\hbox{\footnotesize def}\over =}
\newtheorem{athm}{\bf \t}[section]
\newenvironment{thm} [1] {\def\t{#1}\begin{athm} \bf \rm} {\end{athm}}
\newcom{\bthm}{\begin{thm}}\newcom{\ethm}{\end{thm}}
\newtheorem{theorem}{Theorem}[section]
\newtheorem{lemma}{Lemma}[section]
\newtheorem{corollary}{Corollary}[section]
\newtheorem{proposition}{Proposition}[section]
\newcom{\beq}{\begin{equation}}
\newcom{\eeq}{\end{equation}}
\newcom{\ben}{\begin{eqnarray}}
\newcom{\een}{\end{eqnarray}}
\newcom{\beno}{\begin{eqnarray*}}
\newcom{\eeno}{\end{eqnarray*}}
\newcom{\bali}{\begin{aligned}}
\newcom{\eali}{\end{aligned}}
\numberwithin{equation}{section}
\begin{document}

\title[Well-posedness of the free boundary problem in elasticdynamic]{Well-posedness of the free boundary problem in elasticdynamic with mixed stability condition}

\author{Hui Li}
\address{Department of Mathematics, Zhejiang University, Hangzhou 310027, China}
\email{lihui92@zju.edu.cn}

\author{Wei Wang}
\address{Department of Mathematics, Zhejiang University, Hangzhou 310027, China}
\email{wangw07@zju.edu.cn}

\author{Zhifei Zhang}
\address{School of  Mathematical Sciences, Peking University, Beijing 100871, China}
\email{zfzhang@math.pku.edu.cn}

\begin{abstract}
	In this paper, we prove the local well-posedness of the free boundary problem in incompressible elastodynamics under a mixed type stability condition,
i.e., for each point of the free boundary, at least one of the Taylor sign condition $-\pa_np>0$ and the non-collinearity condition holds.
This gives an affirmative answer to a problem raised by Trakhinin in \cite{Tra3}.
\end{abstract}
\maketitle

\section{Introduction}
\subsection{Presentation of the problem.}
In this paper, we consider the incompressible inviscid flow in 3-D elastodynamics:
\begin{equation}\label{eq:elso}
	\left\{
	\begin{array}{l}
		\pa_t \vu +  \vu\cdot\nabla\vu+ \nabla p= \div(\vF\vF^\top), \\
		\pa_t  \vF +  \vu\cdot\nabla \vF = \nabla\vu \vF,\\
		\div \vu = 0, \\
	\end{array}
	\right.
\end{equation}
where $\vu(t,x)=(u_1,u_2,u_3)$ denotes the fluid velocity,
$p(t,x)$ is the pressure, $\vF(t,x)=(F_{ij})_{3\times 3}$ is the deformation tensor, $\vF^\top=(F_{ji})_{3\times 3}$ denotes the transpose of the matrix $\vF$,
$\vF\vF^\top$ is the Cauchy-Green tensor in the case of neo-Hookean elastic materials,
$(\nabla\vu)_{ij}=\pa_ju_i$, $(\nabla\vu \vF)_{ij}=\sum^3_{k=1}\vF_{kj}\partial_k u_i$,
$(\div \vF^\top)_i=\sum^3_{j=1}\pa_jF_{ji}$,
$(\div(\vF\vF^\top))_i=\sum^3_{j,k=1}\pa_j(F_{ik}F_{jk})$.\smallskip

We will study the solution of (\ref{eq:elso}) defined in a time-dependent domain. Precisely, we let
\begin{align*}
	&	\Om=\mathbb{T}^2\times[-1,1]\subset \mathbb{R}^3,\quad  \Gamma_f=\{x\in\Om|x_3=f(t,x'),x'=(x_1,x_2)\in\bbT^2\},\\
	& \Om_f=\{x\in\Om|x_3\le f(t,x'),x'=(x_1,x_2)\in\bbT^2\},\qquad Q_T=\underset{t\in(0,T)}{\bigcup}\{t\}\times\Om_f,
\end{align*}
where $\Gamma_f$ is the free boundary and is assumed to be a graph. The system reads as
\begin{equation}\label{eq:els}
	\left\{
	\begin{array}{ll}
		\pa_t \vu + \vu\cdot\nabla \vu+ \nabla p =  \sum\limits^3_{j=1}(\vF_j\cdot\nabla) \vF_j &\text{ in }\quad Q_T,  \\
		\div \vu = 0,\,\div \vF^\top = 0&\text{ in }\quad  Q_T,  \\
		\pa_t  \vF_j +  \vu\cdot\nabla \vF_j =(\vF_j\cdot\nabla)\vu&\text{ in }\quad Q_T,
	\end{array}
	\right.
\end{equation}
with the boundary conditions on the moving interface $\Gamma_f$:
\begin{align}\label{bc:els}
&	\vu\cdot\vN_f=\pa_tf,\qquad \vF_j\cdot\vN_f=0,\\
& \label{bc:elsp}
	p=0.
\end{align}
Here $\vF_j=(F_{1j},F_{2j},F_{3j})$, $\vN_f=(-\pa_1f,-\pa_2f,1)$ and $\vn_f= {\vN_f}/{|\vN_f|}$ is the outward unit normal vector. On the artificial boundary $\Gamma^-=\bbT^2\times\{-1\}$, we impose the following boundary conditions on $(\vu,\vF)$:
\begin{align}
 	u_3=0,\qquad F_{3j}=0\qquad \text{on}\ \Gamma^-.
\end{align}
The system \eqref{eq:els} is supplemented with the initial data
\begin{align}
	\vu(0,x)=\vu_0(x),\qquad \vF(0,x)=\vF_0(x)\qquad \text{in}\ \Omega_{f_0},
\end{align}
with
\begin{equation}\label{ini:els}
  \left\{
  	\begin{array}{ll}
  		\div \vu_0=0,\ \div\vF_{0j}=0&x\in\Omega_{f_0},\\
  		\vF_{0j}\cdot\vN_{f_0}=0&x\in\Gamma_{f_0},\\
  		u_{03}=0,\ F_{03j}=0&x\in\Gamma^-.
  	\end{array}
  \right.
\end{equation}
Let us remark that the divergence free restriction on $\vF_j$ is automatically satisfied if $\div\vF_{0j}=0$. Indeed, if we apply the divergence operator to the third equation of (\ref{eq:els}), we will deduce the following transport equation
\begin{align*}
	\pa_t\div\vF_j + \vu\cdot\nabla\div\vF_j=0.
\end{align*}
Similar argument can be also applied to yield that $\vF_j\cdot\vN_f = 0$ on $\Gamma_f$ if $\vF_{0j}\cdot\vN_{f_0} = 0$ on $\Gamma_{f_0}$.\smallskip

The main goal of this paper is to study the local well-posedness of the system \eqref{eq:els}-\eqref{ini:els} under some suitable stability conditions imposed on the initial data.

\subsection{Backgrounds}
The free boundary problems for incompressible inviscid flow have received a lot of attention in the past decades. It is well-known that, under the Taylor sign condition
\begin{align*}
   \vn\cdot\nabla p\le-\varepsilon<0\quad \text{ on }\,\,\Gamma_f,
\end{align*}
the water wave problem for the incompressible Euler flow is well-posed \cite{CL, Wu1, Wu2, SZ1, ZZ}. Otherwise, the system could be ill-posed \cite{Eb}, which is known as the Rayleigh-Taylor instability.
In addition, the vortex sheet problem for the incompressible  Euler flow is always ill-posed, which is called the Kelvin-Helmholtz instability\cite{Maj}.
However, the surface tension has been proved that it could stabilize the Kelvin-Helmholtz and Rayleigh-Taylor instability, see \cite{AM, CCS, SZ2}.

 Syrovatskij \cite{Sy,Ax} observed that the presence of strong tangential magnetic fields can stabilize the Kelvin-Helmholtz instability for magnetohydrodynamics.
 There are many important works devoted to the rigorous mathematical justification, see \cite{Tra1, Tra2, Chen, WY} for the compressible case
 and \cite{MTT1, Tra-in, CMST, SWZ} for the incompressible case.  We also refer to  \cite{MTT2, SWZ2,Tra-JDE, ST} for the plasma-vacuum problem in magnetohydrodynamics.
 The effect of the Taylor sign condition in the plasma-vacuum problem has been studied in \cite{HL, Hao, GW}.

There are also several progresses on the free boundary problems for inviscid elastodynamics. Chen-Hu-Wang \cite{CHW} analyzed the 2-D linearized stability and proved the stabilization effect of elasticity on compressible vortex sheets. Recently, Chen-Huang-Wang-Yuan \cite{CHWY} extended the results to the 3-D nonlinear compressible case. In \cite{Tra3}, Trakhinin proved the well-posedness of the fluid-vacuum free boundary problem in compressible elastodynamics under the condition that there are two columns of the $3\times 3$ deformation tensor which are non-collinear at each point of the initial surface. For the incompressible case, Hao-Wang \cite{HW} proved a priori estimates for solutions in Sobolev spaces under the Taylor sign condition. Li-Wang-Zhang \cite{LWZ} proved the stabilization effect of elasticity on both the vortex sheets and fluid-vacuum problem. Gu-Wang \cite{GW} proved the local well-posedness in a domain with two disconnected free boundaries,
where the Taylor sign condition and non-collinear condition hold on each free boundary.

The aim of this paper is to show the local well-posedness for the fluid-vacuum free boundary problem
in incompressible elastodynamics under a mixed type stability condition, i.e., for each point of the free boundary,
one of the Taylor sign condition and the non-collinear condition is satisfied. The most important contribution of this paper is that we derived a special evolution equation for the free interface, in which both effects of those two stability conditions can be reflected, and the combination of these two conditions will ensure this evolution equation to be strictly hyperbolic.

\subsection{Main result}
We define
\begin{align*}
	\Lambda(\vF)(x')\eqdefa\inf_{\phi_1^2+\phi_2^2=1}\sum_{j=1}^3(\underline{F_{1j}}\phi_1+\underline{F_{2j}}\phi_2)^2(x').
\end{align*}
If $\Lambda(\vF)(x')\ge c_0>0$, we say that $\vF$ is non-collinear at $x'$. Here $\underline{F}$ denotes the restriction of $F$ on $\Gamma_f$ and $x'$ denotes $(x_1,x_2)$ the first two components of $x$.

We assume that there exists $c_0>0$ such that
\begin{equation}\label{condition:s2}
  \left\{
    \begin{array}{ll}
      \vN_f\cdot\uline{\nabla p}\le-c_0<0&x'\in\Gamma^1,\\
      \Lambda(\vF)(x')>c_0&x'\in\Gamma^2,
    \end{array}
  \right.
\end{equation}
where $\Gamma^1$ and $\Gamma^2$ are open sets on $\bbT^2$ satisfying $\bbT^2/\Gamma^1\Subset\Gamma^2$. We call \eqref{condition:s2} the mixed type stability condition.

Let $D_{F_j}=\uline{F_{ij}}\partial_i'$. Our main result is stated as follows.
\begin{theorem}\label{thm:main}
	Let $s\ge4$ be an integer and assume that
  \begin{align*}
    D_{F_{0k}}f_0\in H^{s-1/2}(\bbT^2),\quad f_0\in H^s(\bbT^2),\quad\vu_0,\,\vF_0\in H^s(\Om_{f_0}).
  \end{align*}
    Furthermore, for two open sets $\Gamma^1,\Gamma^2\subset\bbT^2$ satisfying $\bbT^2/\Gamma^1\Subset\Gamma^2$, we assume that there exists $c_0>0$ so that
  \begin{itemize}
    \item[1.] $-(1-2c_0)\le f_0\le (1-2c_0)$;
    \item[2.] $-\vN_{f_0}\cdot\uline{\nabla p_0}\ge 2c_0$ on $\Gamma^1$;
    \item[3.] $\Lambda(\vF_0)(x')\ge 2c_0$ on $\Gamma^2$.
  \end{itemize}
  Then there exists $T>0$ such that the system (\ref{eq:els})-(\ref{ini:els}) admits a unique solution $(f, \vu, \vF)$ in $[0,T]$ satisfying
  \begin{itemize}
    \item[1.] $f\in L^\infty([0,T), H^{s}(\bbT^2))$, $D_{F_k}f\in L^\infty([0,T), H^{s- 1/2}(\bbT^2))$;
    \item[2.] $\vu,\,\vF\in L^\infty\big(0,T;H^{s}(\Omega_f)\big)$;
    \item[3.] $-(1-c_0)\le f\le (1-c_0)$;
    \item[4.]$-\vN_{f}\cdot\uline{\nabla p}\ge c_0$ on $\Gamma^1$;
    \item[5.] $\Lambda(\vF)(x')\ge c_0$ on $\Gamma^2$.
  \end{itemize}			
\end{theorem}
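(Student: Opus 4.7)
The plan is to derive a single scalar evolution equation for the interface $f$ that encodes both the Taylor sign condition and the non-collinearity condition in its principal part, and then to run a linear-iteration scheme based on it. I would first flatten $\Om_f$ to the fixed slab $\bbT^2\times[-1,1]$ via a harmonic-extension diffeomorphism associated with $f$, pulling the system, the boundary conditions on $\Ga_f$ and $\Ga^-$, and the stability conditions back to this fixed geometry.

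The key derivation is as follows. Differentiating the kinematic condition $\pa_t f = \vu\cdot\vN_f$ in time and substituting the momentum equation $\pa_t\vu = -\vu\cdot\na\vu - \na p + \sum_j(\vF_j\cdot\na)\vF_j$, I evaluate the resulting identity on $\Ga_f$. Using $p=0$ and the elliptic problem that $p$ satisfies (obtained by taking $\div$ of the momentum equation), the pressure term produces a contribution of the form $a\,G(f)\pa_t f$, where $a=-\uline{\pa_n p}$ and $G(f)$ is a Dirichlet-to-Neumann-type operator of order one. The elastic term, combined with $\vF_j\cdot\vN_f=0$ and the transport law (which makes $\pa_t$ essentially commute with $D_{F_j}$ along the flow), yields a principal contribution $-\sum_j D_{F_j}^2 f$, an operator of order two whose tangential symbol is $\sum_j(\uline{F_{1j}}\xi_1+\uline{F_{2j}}\xi_2)^2 \ge \Lam(\vF)|\xi'|^2$. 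The outcome is
\beno
\pa_t^2 f + a\,G(f)\,\pa_t f - \sum_{j=1}^3 D_{F_j}^2 f = \mathcal{R}[f,\vu,\vF],
\eeno
with $\mathcal{R}$ collecting lower-order terms.

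The mixed hyperbolicity of this equation is then read off via a partition of unity subordinate to $\bbT^2 = \Ga^1\cup\Ga^2$: on $\Ga^1$ the Taylor term is coercive since $a\ge c_0$ and $G\gtrsim|D'|$, while on $\Ga^2$ the elastic term is elliptic by $\Lam(\vF)\ge c_0$. Linearizing the full system around a reference state, I would derive tangential and $D_{F_j}$ commutator estimates of top order, combine them with the standard interior energy identities for $\vu$ and $\vF$ (where the $\pa_t\vF_j$ versus $(\na\vu)\vF_j$ pairing cancels due to the transport structure), and use the scalar equation above to supply boundary coercivity piece by piece. This closes an estimate for $(\vu,\vF)$ in $H^s(\Om_f)$ coupled to $(f,D_{F_k}f)$ in $H^s\times H^{s-1/2}(\bbT^2)$. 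Approximate solutions are then built by a Picard-type iteration (with tangential mollification if needed) from the linearized problem, and conditions 3--5 of the theorem are propagated by continuity in time on a short interval; uniqueness follows by differencing.

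The main obstacle is the derivation of the mixed equation and its associated coercivity. One must organize the many boundary terms arising after differentiation so that the Taylor and non-collinear effects add in the principal symbol rather than interfere, and one must handle the fact that $a\,G$ and $\sum_j D_{F_j}^2$ are of different orders (one versus two). In particular the coercivity cannot be obtained from a single global G\aa rding inequality but must instead be assembled from two local ones tied to $\Ga^1$ and $\Ga^2$, with the commutator errors from the partition absorbed thanks to the strict inclusion $\bbT^2/\Ga^1\Subset\Ga^2$ in the hypothesis.
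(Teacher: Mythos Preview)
Your overall plan---derive a scalar boundary equation that carries both stability mechanisms, then close energy estimates on it coupled to the interior $(\vu,\vF)$ system---is the paper's plan too. But the form of your key equation is wrong, and this matters. Taking $D_t$ of the kinematic condition and substituting the momentum equation gives, at the level of $D_t^2 f$, only the scalar $-\uline{\partial_3 p}$ from the pressure; there is no operator acting on $\partial_t f$. To extract a Dirichlet--Neumann structure one must differentiate once more in $x'$: the paper works with $D_t^2\partial_i' f$, introduces $q_i=\partial_i\mathring p+\partial_3\mathring p\,\mathcal H_f(\partial_i' f)$ (which vanishes on $\Gamma_f$), and obtains
\[
D_t^2\partial_i' f-\sum_{k=1}^3 D_{F_k}^2\partial_i' f+\mathfrak a\,\mathcal N_f(\partial_i' f)=\text{l.o.t.},\qquad \mathfrak a=-\uline{\partial_3 p}.
\]
This is conservative, not damped: testing against $D_t(\partial_i' f)$ produces the time derivative of $|D_t\partial_i' f|^2+\sum_k|D_{F_k}\partial_i' f|^2+(\mathfrak a\,\mathcal N_f\partial_i' f,\partial_i' f)$. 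Your term $a\,G(f)\,\partial_t f$ would instead yield a sign--definite dissipation $(aG\partial_t f,\partial_t f)$, which the Euler--elastic system does not produce and which would not match the interior energy identity; the argument built on it would not close.

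Two further differences in execution. For the mixed coercivity, the paper does not localize via a partition of unity. Instead it replaces $\mathfrak a$ by $\bar{\mathfrak a}=\mathfrak a+\phi\tilde c$, with $\phi$ a smooth cutoff supported in $\Gamma^2$ and $\tilde c$ a constant chosen so that $\bar{\mathfrak a}\ge c_0$ everywhere; the harmonic extension $\tilde{\mathfrak a}$ of $\bar{\mathfrak a}$ then serves as a globally positive weight in the Taylor--sign energy $\int_{\Omega_f}\tilde{\mathfrak a}\,|\nabla\mathcal H_f(\langle\nabla'\rangle^{s-3/2}\partial_i' f)|^2\,dx$. The correction $\bar{\mathfrak a}-\mathfrak a$ is supported in $\Gamma^2$, where non-collinearity gives a full tangential derivative from $\sum_k|D_{F_k}\cdot|$, so the resulting error is absorbed by the $|D_{F_k}\partial_i' f|_{H^{s-3/2}}$ part of the energy. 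Your partition-of-unity idea aims at the same effect but would generate more commutators. For existence, the paper does not run a Picard iteration but an $\varepsilon$-regularization: the pressure boundary condition is changed to $p=-\varepsilon\bar{\mathcal N}_f^{-1}\Delta' f$, which adds $\varepsilon\Delta'\partial_i' f$ to the interface equation; well-posedness of this regularized system is taken from prior work, the mixed stability condition is shown to persist for a time independent of $\varepsilon$, and a uniform energy estimate (using the weight $\tilde{\mathfrak a}$ above) then allows passage to the limit $\varepsilon\to 0$.
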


The rest of this paper is organized as follows. In Section 2, we introduce the harmonic coordinate, Dirichlet-Neumann operator and some basic estimates related to the theorem. In Section 3, we derive an evolution equation for the free interface in which  both effects of those two stability conditions can be reflected. In Section 4, we construct an $\e$-regularized system and construct an approximation sequence to the solution of the original system. In Section 5, we prove the existence and uniqueness of the solution.

\section{Harmonic coordinate and Dirichlet-Neumann operator}
In this section, we recall some facts and well-known results on the harmonic coordinate and Dirichlet-Neumann operators.

We first introduce some notations used throughout this paper.
We use $x=(x_1,x_2,x_3)$ or $y=(y_1,y_2,y_3)$ to denote the coordinates in the fluid region, and use $x'=(x_1,x_2)$  or $y'=(y_1,y_2)$ to denote the natural coordinates on the interface.
In addition, we will use the Einstein summation notation where a summation from 1 to 2 is implied over repeated index,
while a summation from 1 to 3 over repeated index will be explicitly figured out by the symbol $\sum$  (i.e. $a_ib_i=a_1b_1+a_2b_2, \sum_{i=1}^3a_ib_i=a_1b_1+a_2b_2+a_3b_3$).

For a function $g:\Omega_f\to\mathbb R$, we denote $\nabla g=(\pa_1g,\pa_2g,\pa_3g)$, and for a function $\eta:\bbT^2\to \mathbb R$, $\nabla'\eta=(\pa_1\eta,\pa_2\eta)$, it is the same for the operator $\Delta$ and $\Delta'$. For a function $g:\Omega_f\to\mathbb R$, we can define its trace on $\Gamma_f$, which are denoted by $\underline g(x')$. Thus, for $i=1,2$,
\begin{align*}
	\pa_i\underline g(x')=\pa_i g(x',f(x'))+\pa_3g(x',f(x'))\pa_if(x').
\end{align*}
In this paper we do not distinguish $D_t=\pa_t+u_1\pa_1+u_2\pa_2+u_3\pa_3$ on $\Omega_f$ and $D_t=\pa_t+\uline{u_1}\pa_1'+\uline{u_2}\pa_2'$ on $\Gamma_f$. Recalling that $\uline{\vu}\cdot\vN_f=\pa_tf$, for any function $v$ defined on $\Omega_f$, we have
\begin{align*}
	\uline {D_t v}=D_t \uline{v}.
\end{align*}
We denote by $||\cdot||_{H^s(\Omega_f)}$, $|\cdot|_{H^s}$ the Sobolev norm on $\Omega_f$ and $\bbT^2$ respectively.

In the free boundary problem, the functions $(\vu,\vF)$ are defined in a domain changing with time $t$.
To overcome this difficulty, we pull them back to the fixed reference domain $\Om_*$ \cite{LWZ}. Let $\Gamma_*$ be a fixed graph surface given by
\begin{align*}
	\Gamma_*=\{(y_1,y_2,y_3):y_3=f_*(y_1,y_2)\},
\end{align*}
where $f_*$ satisfies $\int_{\bbT^2}f_*(y')dy'=0$. The reference domain $\Omega_*$ is given by
\begin{align*}
	\Omega_*=\{y\in\Om|y_3\le f_*(y_1,y_2)\}.
\end{align*}
We will look for the free boundary which lies in a neighborhood of $f_*$. As a result, we define
\begin{align*}
	\Upsilon(\delta,k)&\eqdefa\Big\{f\in H^k(\mathbb{T}^2): |f-f_*|_{H^k(\mathbb{T}^2)}\le \delta \Big\}.
\end{align*}
For $f\in \Upsilon(\delta,k)$, we can define the graph $\Gamma_f$ by
\begin{align*}
	\Gamma_f\eqdefa\left\{x\in \Om_t| x_3=f(t,x'), \int_{\mathbb{T}^2}f(t,x')dx'=0 \right\}.
\end{align*}
Now we introduce the harmonic coordinate. Given $f\in\Upsilon(\delta,k)$, we define a map $\Phi_f$ from $\Omega_*$ to $\Omega_f$ by the harmonic extension:
\begin{equation}
	\left\{
	\begin{array}{ll}
		\Delta_y \Phi_f=0, &y\in \Omega_*,\\
		\Phi_f(y',f_*(y'))=(y',f(y')),  &y'\in\mathbb{T}^2,\\
		\Phi_f(y',-1)=(y',-1),  &y'\in\mathbb{T}^2.\\
	\end{array}
	\right.
\end{equation}
Given $\Gamma_*$, there exists $\delta_0=\delta_0(|f_*|_{W^{1,\infty}})>0$ so that $\Phi_f$ is a bijection when $\delta\le \delta_0$. Then we can define an inverse map $\Phi_f^{-1}:\Omega_f\to\Omega_*$ such that
\begin{equation}\nonumber
	\Phi_f^{-1}\circ\Phi_f=\Phi_f\circ\Phi_f^{-1}=\mathrm{Id}.
\end{equation}
The following properties come from \cite{SWZ}.
\begin{lemma}\label{lem:basic}
Let $f\in \Upsilon(\delta_0,s-\f12)$ for $s\ge 3$. Then there exists a constant $C$ depending only on $\delta_0$ and  $\|f_*\|_{H^{s-\f12}}$ so that

\begin{itemize}
	\item[1.] If $u\in H^{\sigma}(\Omega_f)$ for $\sigma\in [0,s]$, then
	\begin{align*}
		\|u\circ\Phi_f\|_{H^\sigma(\Omega_*)}\le C\|u\|_{H^\sigma(\Omega_f)}.		
	\end{align*}
	\item[2.] If $u\in H^{\sigma}(\Om_*)$ for $\sigma\in [0,s]$, then
	\begin{align*}
		\|u\circ\Phi_f^{-1}\|_{H^{\sigma}(\Omega_f)}\le C\|u\|_{H^\sigma(\Om_*)}.		
	\end{align*}
	\item[3.] If $u, v\in H^{\sigma}(\Om_*)$ for $\sigma\in [2,s]$, then
	\begin{align*}
		\|uv\|_{H^\sigma(\Omega_f)}\le C\|u\|_{H^\sigma(\Omega_f)}\|v\|_{H^\sigma(\Omega_f)}.
	\end{align*}
\end{itemize}
\end{lemma}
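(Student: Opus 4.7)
The plan is to derive all three estimates from a single underlying regularity statement for the harmonic map $\Phi_f$, combined with standard composition and product calculus in Sobolev spaces.

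First I would establish a bound $\|\Phi_f - \mathrm{Id}\|_{H^s(\Omega_*)} \le C$ with $C = C(\delta_0, |f_*|_{H^{s-1/2}})$. By construction, $\Phi_f - \mathrm{Id}$ is harmonic on $\Omega_*$ with boundary data $(0,0,f-f_*)$ on $\Gamma_*$ and $0$ on $\Gamma^-$. Flattening $\Gamma_*$ by the graph diffeomorphism $(y',y_3)\mapsto(y',y_3-f_*(y'))$ (which is an $H^{s-1/2}$ change of variables controlled by $|f_*|_{H^{s-1/2}}$) reduces this to standard elliptic regularity for a second-order operator with $H^{s-1}$ coefficients; the resulting estimate gives $\|\Phi_f-\mathrm{Id}\|_{H^s(\Omega_*)} \le C|f-f_*|_{H^{s-1/2}(\bbT^2)}\le C\delta_0$. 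Since $s\ge 3$, the embedding $H^s(\Omega_*)\hookrightarrow C^1(\overline{\Om_*})$ then makes $\nabla \Phi_f$ close to the identity in $L^\infty$ when $\delta_0$ is small enough, so the Jacobian $J_{\Phi_f}$ is bounded above and below, and an analogous estimate $\|\Phi_f^{-1}-\mathrm{Id}\|_{H^s(\Om_f)}\le C$ follows by inverting and using the same elliptic regularity for $\Phi_f^{-1}$.

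For item 1, the case $\si=0$ is immediate from the change of variables formula together with the two-sided bound on $J_{\Phi_f}$. For integer $\si=k\in[1,s]$, I would iterate the chain rule to write $\nabla^k(u\circ\Phi_f)$ as a finite sum of products of $(\nabla^j u)\circ\Phi_f$ with tensor products of $\nabla^{\al_1}\Phi_f,\dots,\nabla^{\al_j}\Phi_f$ of total order $k$, and apply Moser-type tame product estimates: the highest-order factor is placed in $L^2$, while the lower-order factors are placed in $L^\infty$ using $H^s\hookrightarrow W^{k-1,\infty}$ for $k\le s$. The resulting bound reads schematically
\begin{align*}
\|u\circ\Phi_f\|_{H^k(\Om_*)}\le C\bigl(1+\|\Phi_f\|_{H^s(\Om_*)}\bigr)^k\|u\|_{H^k(\Om_f)}.
\end{align*}
Noninteger $\si\in[0,s]$ is then handled by real (or complex) interpolation between the integer endpoints. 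Item 2 is the mirror argument applied to $\Phi_f^{-1}$, whose $H^s(\Om_f)$-bound was established in the first step.

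For item 3 (reading the right-hand side on $\Om_f$, as the hypothesis and conclusion force), the classical product estimate on a Lipschitz bounded domain yields, for $\si>3/2$, that $H^\si(\Om_f)$ is a Banach algebra; this is a consequence of Littlewood--Paley decomposition plus $H^\si\hookrightarrow L^\infty$, and in particular holds for $\si\in[2,s]$ in dimension three. Transferring the hypothesis from $\Om_*$ to $\Om_f$ via item 2 and then applying the algebra property on $\Om_f$ yields the stated inequality. The main obstacle is really the first step: the elliptic regularity estimate must be carried out on a domain whose boundary is only $H^{s-1/2}$, so the flattening coefficients are exactly at the threshold, and tame product estimates are needed to keep the constants linear in $|f_*|_{H^{s-1/2}}$. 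Everything after that is standard Sobolev calculus.
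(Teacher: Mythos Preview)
Your outline is correct and is essentially the standard route to such composition estimates. Note, however, that the paper does not actually prove this lemma: it simply states ``The following properties come from \cite{SWZ}'' and cites Sun--Wang--Zhang. So there is no proof in the paper to compare against; what you have written is a faithful sketch of the argument one finds in that reference (and in the water-waves literature more broadly): elliptic regularity for the harmonic extension $\Phi_f$ controlled by the boundary data $f-f_*$, Fa\`a di Bruno plus tame product estimates for the composition, interpolation for noninteger $\sigma$, and the algebra property of $H^\sigma$ for $\sigma>3/2$ in three dimensions. Your reading of item~3 (that the hypothesis should be $u,v\in H^\sigma(\Omega_f)$, matching the right-hand side) is the intended one; the $\Omega_*$ in the hypothesis is a typographical slip in the paper.
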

We will use the Dirichlet-Neumann operator, which maps the Dirichlet boundary value of a harmonic function to its Neumann boundary value. That is to say,
for any $g(x')=g(x_1,x_2)\in H^k(\mathbb{T}^2)$, we denote by $\mathcal{H}_f g$  the harmonic extension to $\Omega_f$:
\begin{equation}
	\left\{
	\begin{array}{ll}
		\Delta \mathcal{H}_f g =0,& x\in \Omega_f,\\
		(\mathcal{H}_f g)(x',f(x'))=g(x'),&  x'\in\mathbb{T}^2,\\
		\mathcal{H}_f g(x',-1)=0,&  x'\in\mathbb{T}^2.
	\end{array}
	\right.
\end{equation}
We define another harmonic extension for different use:
\begin{equation}
	\left\{
	\begin{array}{ll}
		\Delta \bar{\mathcal{H}}_f g =0,& x\in \Omega_f,\\
		(\bar{\mathcal{H}}_f g)(x',f(x'))=g(x'),&  x'\in\mathbb{T}^2,\\
		\partial_3\bar{\mathcal{H}}_f g(x',-1)=0,&  x'\in\mathbb{T}^2.
	\end{array}
	\right.
\end{equation}

Then, we define two kinds of Dirichlet-Neumann operators:
\begin{align}
  \mathcal{N}_fg\overset{def}{=}\vN_f\cdot(\nabla\mathcal{H}_fg)\big|_{\Gamma_f},\quad   \bar{\mathcal{N}}_fg\overset{def}{=}\vN_f\cdot(\nabla\bar{\mathcal{H}}_fg)\big|_{\Gamma_f}.
\end{align}
We will use the following properties from \cite{ABZ, SWZ}.
\begin{lemma}\label{lem:DN}
	It holds that
	\begin{itemize}
		\item[1.] $\mathcal{N}_f$ is a self-adjoint operator:
		\begin{align*}
			(\mathcal{N}_f\psi,\phi)=(\psi,\mathcal{N}_f\phi),\quad\forall \phi,\, \psi\in H^{1/2}(\bbT^2);
		\end{align*}
		\item[2.] $\mathcal{N}_f$ is a positive operator:
		\begin{align*}
			(\mathcal{N}_f\phi,\phi)=\|\na\mathcal{H}_f\phi\|_{L^2(\Omega_f)}^2\ge 0,\quad \forall \phi\in H^{1/2}(\bbT^2);	
		\end{align*}
		Especially, if $\int_{\bbT^2}\phi(x')dx'=0$, there exists $c>0$ depending on $c_0, \|f\|_{W^{1,\infty}}$ such that
		\begin{align*}
			(\mathcal{N}_f\phi,\phi)\ge c\|\mathcal{H}_f\phi\|_{H^1(\Omega_f)}^2\ge c|\phi|_{H^{1/2}}^2.
		\end{align*}
		\item[3.] $\mathcal{N}_f$ is a bijection from $H^{k+1}_0(\bbT^2)$ to $H^{k}_0(\bbT^2)$ for $k\ge 0$, where
		\begin{align*}
			H^{k}_0(\bbT^2)\eqdefa H^k(\bbT^2)\cap\Big\{\phi\in L^2(\bbT^2):\int_{\bbT^2}\phi(x')dx'=0\Big\}.
		\end{align*}
	\end{itemize}
\end{lemma}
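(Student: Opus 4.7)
The plan is to establish the three properties in sequence, relying on Green's identity for Parts 1 and 2, and on Lax--Milgram combined with elliptic regularity for Part 3.

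For self-adjointness (Part 1), I would introduce the bilinear form $a(\phi,\psi)=\int_{\Omega_f}\nabla\mathcal{H}_f\phi\cdot\nabla\mathcal{H}_f\psi\,dx$, which is manifestly symmetric in $\phi$ and $\psi$. Integration by parts, together with $\Delta\mathcal{H}_f\psi=0$, $\mathcal{H}_f\phi|_{\Gamma^-}=0$, and the relations $dS=|\vN_f|\,dx'$, $\vn_f=\vN_f/|\vN_f|$ on $\Gamma_f$, converts $a(\phi,\psi)$ into the boundary pairing $\int_{\bbT^2}\phi\,\mathcal{N}_f\psi\,dx'=(\phi,\mathcal{N}_f\psi)$. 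Symmetry of $a$ in its two arguments then yields $(\mathcal{N}_f\phi,\psi)=(\phi,\mathcal{N}_f\psi)$, and this extends to $H^{1/2}(\bbT^2)$ by density. For positivity (Part 2), the choice $\phi=\psi$ in the same identity gives $(\mathcal{N}_f\phi,\phi)=\|\nabla\mathcal{H}_f\phi\|_{L^2(\Omega_f)}^2\ge 0$. To obtain the improved bound under the mean-zero hypothesis, I would chain Poincar\'e's inequality (available because $\mathcal{H}_f\phi$ vanishes on $\Gamma^-$) with the standard trace inequality $|\phi|_{H^{1/2}}\le C\|\mathcal{H}_f\phi\|_{H^1(\Omega_f)}$.

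For bijectivity (Part 3), injectivity comes for free: $\mathcal{N}_f\phi=0$ combined with the positivity identity forces $\nabla\mathcal{H}_f\phi\equiv 0$, so $\mathcal{H}_f\phi$ is a constant, which the boundary condition on $\Gamma^-$ pins to zero, giving $\phi=0$. For surjectivity, given $g\in H^k_0(\bbT^2)$, I would solve the mixed boundary value problem
\[
\Delta u=0 \ \text{in}\ \Omega_f,\qquad u|_{\Gamma^-}=0,\qquad \vN_f\cdot\nabla u|_{\Gamma_f}=g,
\]
via Lax--Milgram on $V=\{v\in H^1(\Omega_f):v|_{\Gamma^-}=0\}$ with the coercive form $a(u,v)=\int_{\Omega_f}\nabla u\cdot\nabla v\,dx$ (coercivity from Poincar\'e in $V$) and continuous functional $L(v)=\int_{\bbT^2}g\,v|_{\Gamma_f}\,dx'$. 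Testing with $v\in C_c^\infty(\Omega_f)$ shows $u$ is harmonic; the natural boundary condition then reads $\vN_f\cdot\nabla u=g$ on $\Gamma_f$. Setting $\phi=u|_{\Gamma_f}$ gives $u=\mathcal{H}_f\phi$ and $\mathcal{N}_f\phi=g$. Iterated application of standard elliptic regularity for the mixed BVP upgrades $u$ to $H^{k+3/2}(\Omega_f)$, hence $\phi\in H^{k+1}(\bbT^2)$ with the quantitative bound.

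The main obstacle I expect is the compatibility of the mean-zero condition with the mixed BVP. Adding a constant to $\phi$ changes the Dirichlet data on $\Gamma^-$ (breaking $u|_{\Gamma^-}=0$) and simultaneously modifies $\mathcal{N}_f\phi$ by a nonzero amount, so one cannot normalize $\phi$ a posteriori in the naive way. Closing this gap requires carefully tracking the mean-zero hypothesis through the harmonic extension and the trace on $\Gamma_f$ --- precisely the kind of bookkeeping carried out for the analogous Dirichlet--Neumann operators in \cite{ABZ} and \cite{SWZ}, whose framework is being imported by the authors; I would invoke those results to finish.
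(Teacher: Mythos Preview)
The paper does not give its own proof of this lemma; it simply cites \cite{ABZ, SWZ}. Your sketch is the standard argument and is essentially what one finds in those references, so there is no meaningful methodological difference to report.

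A few remarks on Part 3. Your Lax--Milgram argument actually proves more than the stated lemma: because coercivity on $V=\{v\in H^1(\Omega_f):v|_{\Gamma^-}=0\}$ comes from the Dirichlet condition on $\Gamma^-$ (Poincar\'e), no compatibility condition on $g$ is needed, and you obtain that $\mathcal{N}_f:H^{k+1}(\bbT^2)\to H^k(\bbT^2)$ is a bijection outright. The mean-zero restriction is really the natural one for $\bar{\mathcal{N}}_f$ (Neumann on $\Gamma^-$), where $\bar{\mathcal{N}}_f 1=0$ and $\int_{\bbT^2}\bar{\mathcal{N}}_f\phi\,dx'=0$ for every $\phi$ by the divergence theorem; there the quotient by constants is forced. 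Your self-identified ``obstacle'' is therefore slightly misdiagnosed: the issue is not normalizing $\phi$ a posteriori, but whether $\mathcal{N}_f$ maps $H^{k+1}_0$ \emph{into} $H^k_0$ at all, and for the Dirichlet-bottom operator this is not automatic (indeed $\mathcal{N}_f 1\neq 0$, and self-adjointness gives $\int\mathcal{N}_f\phi=(\phi,\mathcal{N}_f 1)$, which need not vanish when $\mathcal{N}_f 1$ is non-constant). Since the paper only invokes $\mathcal{N}_f^{-1}$ as a bounded map $H^{\sigma-1}\to H^\sigma$ (Lemma~\ref{lem:DN-1}) and otherwise uses the mean-zero formulation for $\bar{\mathcal{N}}_f$, your instinct to defer the bookkeeping to \cite{ABZ,SWZ} is the right call and matches what the authors do.
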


\begin{lemma}\label{lem:DN-1}
If $f\in H^{s}(\bbT^2)$ for $s>3$, then it holds that for any $\sigma\in[\frac{1}{2},s]$,
\begin{align}
	|\mathcal{N}_f^{-1}\phi|_{H^\sigma}\le C(|f|_{H^s})|\phi|_{H^{\sigma-1}}.
\end{align}
\end{lemma}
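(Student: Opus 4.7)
\emph{Proof proposal.} My plan is to interpret $\phi := \mathcal{N}_f^{-1}\psi$ as the Dirichlet trace of the solution of a mixed boundary value problem and then invoke elliptic regularity, with the base case handled by coercivity.

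The starting point is the base case $\sigma = 1/2$. Since $\mathcal{N}_f : H^{k+1}_0(\bbT^2) \to H^k_0(\bbT^2)$ is a bijection by Lemma \ref{lem:DN}(3), we have $\int \phi\, dx' = 0$, so the strong coercivity in Lemma \ref{lem:DN}(2) applies: combining it with self-adjointness and duality gives
\begin{align*}
c\,|\phi|_{H^{1/2}}^2 \le (\mathcal{N}_f \phi, \phi) = (\psi, \phi) \le |\psi|_{H^{-1/2}}\,|\phi|_{H^{1/2}},
\end{align*}
which yields $|\phi|_{H^{1/2}} \le C(|f|_{W^{1,\infty}})\,|\psi|_{H^{-1/2}}$.

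For the higher regularity range $\sigma \in (1/2, s]$ I would bootstrap via the mixed boundary value problem satisfied by the harmonic extension. Setting $u := \mathcal{H}_f \phi$, we have $\Delta u = 0$ in $\Omega_f$, $u|_{\Gamma^-} = 0$, and $\vN_f \cdot \nabla u|_{\Gamma_f} = \mathcal{N}_f \phi = \psi$. Pulling this back to the fixed reference domain $\Omega_*$ via the harmonic coordinate $\Phi_f$, the function $v := u \circ \Phi_f$ satisfies a divergence-form uniformly elliptic equation $\partial_i(A_{ij}(y) \partial_j v) = 0$ on $\Omega_*$ with Dirichlet condition on $\Gamma^-$ and a Neumann-type condition on $\Gamma_*$ of the form $B(y') \cdot \nabla v = \psi \circ \pi$, where the coefficients $A, B$ are constructed from $\nabla \Phi_f$ and therefore controlled by $|f|_{H^s}$ via Lemma \ref{lem:basic}. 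Standard elliptic regularity for mixed problems on the fixed, smooth reference domain then gives, for each integer $m$ with $1 \le m \le s$,
\begin{align*}
\|v\|_{H^{m+1/2}(\Omega_*)} \le C(|f|_{H^s})\bigl(|\psi|_{H^{m-1}} + |\phi|_{H^{1/2}}\bigr),
\end{align*}
and the lower-order term on the right is controlled by the base case. Applying the trace theorem and Lemma \ref{lem:basic}(2) transfers this back to $|\phi|_{H^m} = |\underline{u}|_{H^m} \le C\|u\|_{H^{m+1/2}(\Omega_f)} \le C(|f|_{H^s})|\psi|_{H^{m-1}}$. Non-integer $\sigma$ follow by interpolation between consecutive integers.

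The main technical obstacle is the regularity bookkeeping in the bootstrap: carrying out $m$ elliptic regularity steps requires the coefficients $A_{ij}, B$ to lie in $H^{s-1}$ and the product/trace estimates needed to absorb them must be sharp enough to reach $\sigma = s$ (not merely $\sigma = s - \tfrac{1}{2}$). An alternative route, that sidesteps the full flattening argument, is to apply tangential derivatives directly on the boundary and use the commutator identity $\mathcal{N}_f(\partial_i' \phi) = \partial_i' \psi - [\partial_i', \mathcal{N}_f]\phi$, where $[\partial_i', \mathcal{N}_f]$ is a first-order operator whose symbol is controlled by $\nabla' f$; this reduces the problem inductively to the base case, at the cost of proving the requisite commutator bounds. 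Either way, the estimate is standard in the spirit of \cite{ABZ, SWZ}.
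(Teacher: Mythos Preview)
The paper does not give its own proof of this lemma; it simply refers to \cite{SWZ}. Your outline --- coercivity from Lemma~\ref{lem:DN} for the base case $\sigma=\tfrac12$, then elliptic regularity for the mixed Laplace problem $\Delta u=0$, $u|_{\Gamma^-}=0$, $\vN_f\cdot\nabla u|_{\Gamma_f}=\psi$ pulled back to the reference domain, followed by interpolation --- is exactly the standard argument used there, so your proposal is correct and matches the intended approach.
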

\begin{proof}
	The proof can be found in \cite{SWZ}.
\end{proof}
The results in the above two lemmas also hold for $\bar{\mathcal{N}}_f$.

\section{Evolution equation for the free interface}
In this section, we derive the evolution equation for the interface from the original system (\ref{eq:els})-(\ref{ini:els}).
The key ingredient here is that the evolution equation for $\partial_i'f$ could reflect the stabilization mechanism of both two stability conditions explicitly.

Recalling the boundary conditions for $\vu$ and $\vF$,
we have
	\begin{align*}
		D_tf=\uline{u_3}.
	\end{align*}
For any function $g=g(t,x')$, we also have:
	\begin{align}
		[D_t, \partial_i']g=-\partial_i'\uline{u_j}\partial_j'g,\quad
		[D_t, \uline{F_{jk}}\partial_j']g=0,\quad
		[\partial_i', \uline{F_{jk}}\partial_j']g=\partial_i'\uline{F_{jk}}\partial_j'g.
	\end{align}
Then by $D_t^2f=D_t\uline{u_3}$, it follows that
\begin{align*}
	D^2_t\pa'_if&=D_t\pa_i'D_tf+D_t[D_t,\pa_i']f=\pa'_iD_t^2f+[D_t,\pa'_i]D_tf+D_t[D_t,\pa_i']f\\
	&=\pa_i'D_t\uline{u_3}-(\pa'_i\uline{u_j})\pa'_jD_tf-D_t((\pa_i'\uline{u_j})\pa_j'f)\\
	&=\pa_i'D_t\uline{u_3}-(\pa_i'\uline{u_j})\pa_j'D_tf-(\pa_i'\uline{u_j})D_t(\pa_j'f)-(D_t\pa_i'\uline{u_j})\pa_j'f\\
	&=\pa_i'D_t\uline{u_3}-(\pa_i'\uline{u_j})(\pa_j'\uline{u_k})\pa_k'f-2(\pa_i'\uline{u_j})D_t(\pa_j'f)-(\pa_i'D_t\uline{u_j})\pa_j'f+(\pa_i'\uline{u_k})(\pa_k'\uline{u_j})\pa_j'f\\
	&=\pa_i'D_t\uline{u_3}-2(\pa_i'\uline{u_j})D_t\pa_j'f-(\pa_i'D_t\uline{u_j})\pa_j'f
.\end{align*}
Since $\uline{\vF_j}\cdot \vN_f=0$, the first equation of \eqref{eq:els} implies that
\begin{align*}
	\uline{D_tu_i}&=-\uline{\partial_i p}+\sum^3_{j=1}\uline{\vF_j\cdot\nabla F_{ij}}\\
  &=-\uline{\pa_ip}+\sum^3_{j=1}\uline{F_{sj}}\pa_s'\uline{F_{ij}}+\sum^3_{j=1}\uline{\vF_j}\cdot\vN_f\uline{\pa_3F_{ij}}=-\uline{\pa_ip}+\sum^3_{j=1}\uline{F_{sj}}\pa_s'\uline{F_{ij}}.
\end{align*}
 Using the equality above, there holds
\begin{align*}
	&\partial_i'D_t\uline{u_3}-(\partial_i'D_t\uline{u_j})\partial_j'f\\
	=&-\partial_i'\uline{\partial_3p}+\sum^3_{k=1}\pa_i'(\uline{F_{sk}}\pa_s'\uline{F_{3k}})+(\pa_i'\uline{\pa_jp})\pa_j'f-\sum^3_{k=1}\big(\pa_i'(\uline{F_{sk}}\pa_s'\uline{F_{jk}})\big)\pa_j'f\\
  =&-\pa_i'\uline{\pa_3p}+(\partial_i'\uline{\pa_jp})\partial_j'f+\sum^3_{k=1}\Big((\partial_i'\uline{F_{sk}})\partial_s'(\uline{F_{jk}}\pa_j'f)+\uline{F_{sk}}\partial_s'\big((\partial_i'\uline{F_{jk}})\pa_j'f\big)+\uline{F_{sk}}\partial_s'(\uline{F_{jk}}\partial_i'\pa_j'f)\Big)\\
  &-\sum^3_{k=1}\Big((\partial_i'\uline{F_{sk}})(\partial_s'\uline{F_{jk}})\partial_j'f+\uline{F_{sk}}(\partial_i'\partial_s'\uline{F_{jk}})\partial_j'f\Big)\\
  =&-\pa_i'\uline{\pa_3p}+(\partial_i'\uline{\pa_jp})\partial_j'f+\sum^3_{k=1}\Big((\partial_i'\uline{F_{sk}})\uline{F_{jk}}\partial_s'\partial_j'f+\uline{F_{sk}}(\partial_i'\uline{F_{jk}})\partial_s'\partial_j'f+\uline{F_{sk}}\partial_s'(\uline{F_{jk}}\partial_i'\partial_j'f)\Big)\\
  =&-\pa_i'\uline{\pa_3p}+(\partial_i'\uline{\pa_jp})\partial_j'f+\sum^3_{k=1}\Big(2(\partial_i'\uline{F_{sk}})\uline{F_{jk}}\partial_s'\partial_j'f+\uline{F_{sk}}\partial_s'(\uline{F_{jk}}\partial_i'\partial_j'f)\Big),
\end{align*}
where we use $\uline{\vF_j}\cdot \vN_f=0$ again. Recalling that $D_{F_j}=\uline{F_{ij}}\partial_i'$, we arrive at
\begin{align*}
	D_t^2\partial_i' f=&-\partial_i'\uline{\partial_3p}+(\partial_i'\uline{\partial_jp})\partial_j'f+\sum^3_{k=1}\Big(2(\partial_i'\uline{F_{sk}})\uline{F_{jk}}\partial_s'\partial_j'f+\uline{F_{sk}}\partial_s'(\uline{F_{jk}}\partial_i'\partial_j'f)\Big)-2(\pa_i'\uline{u_j})D_t\pa_j'f\\
	=&-\partial_i'\uline{\partial_3p}+(\partial_i'\uline{\partial_jp})\partial_j'f+\sum^3_{k=1}D^2_{F_k}\partial_i'f+\sum^3_{k=1}2(\partial_i'\uline{F_{sk}})D_{F_k}\partial_s'f-2(\pa_i'\uline{u_j})D_t\pa_j'f.
\end{align*}
Now, we {focus on} $-\partial_i'\partial_3p+(\partial_i'\partial_jp)\partial_j'f$.  We write
\begin{align}
	p=p_{\vu, \vu}-\sum^3_{j=1}p_{\vF_j, \vF_j},
\end{align}
where $p_{\vu_1, \vu_2}$ is the solution of the following equation:
\begin{equation}\label{eq:p}
	\left\{
	\begin{array}{ll}
		\Delta p_{\vu_1, \vu_2}= -\mathrm{tr}(\nabla\vu_1\nabla\vu_2)
		&x\in\Omega_f,\\
		p_{\vu_1, \vu_2}=0&x\in\Gamma_f,\\
		\pa_3 p_{\vu_1, \vu_2}=0&x\in\Gamma^-.
	\end{array}\right.
\end{equation}
Define auxiliary functions $q_i\eqdefa\partial_ip+\partial_3p\cH_f(\partial_i'f)$, which satisfies $q_i|_{\Gamma_f}=\partial_ip+\partial_3p\partial_i'f=0$. Then, we have
\begin{align*}
	-\partial_i'\uline{\partial_3p}+(\partial_i'\uline{\partial_jp})\partial_j'f
	=&-\uline{\partial_i\partial_3p}-\uline{\partial_3^2p}\partial_i'f+(\uline{\partial_i\partial_jp})\partial_j'f+\uline{\partial_3\partial_jp}\partial_i'f\partial_j'f\\
	=&-\uline{\partial_3q_i}+\uline{\partial_3p\partial_3\cH_f(\partial_i'f)}+\partial_j'f(\uline{\partial_jq_i}-\uline{\partial_3p\partial_j\cH_f(\partial_i'f)})\\
	=&-\vN_f\cdot\uline{\nabla q_i}+\uline{\partial_3p}\vN_f\cdot\uline{\nabla\cH_f(\partial_i'f)}.
\end{align*}
Finally, we arrive at
\begin{align}\label{eq:els-evo}
	D_t^2\partial_i' f=&\uline{\partial_3p}\vN_f\cdot\uline{\nabla\cH_f(\partial_i'f)}+\sum^3_{k=1}D^2_{F_k}\partial_i'f\\
	&\quad+\sum^3_{k=1}2(\partial_i'\uline{F_{sk}})D_{F_k}\partial_s'f-2(\pa_i'\uline{u_j})D_t\pa_j'f-\vN_f\cdot\uline{\nabla q_i}.\nonumber
\end{align}
From the fact that $p=0$ on $\Gamma_f$, the Taylor sign condition gives
\begin{align*}
    -|\vN_f|^2\uline{\pa_3p}=-\vN_f\cdot\uline{\nabla p}\ge c>0\qquad \text{on}\ \Gamma^1.
\end{align*}
{One can see that the equation \eqref{eq:els-evo} explicitly shows the stabilization mechanism for both two stability conditions.}

As $\Gamma_f$ is a graph, $|\vN_f|_{L^\infty}<\infty$. In the rest of this paper, we write the Taylor sign condition as $-\uline{\pa_3p}\ge c$ just for convenience.

\section{$\varepsilon$-regularized system}
{For the original system (\ref{eq:els})-(\ref{ini:els}), we choose the initial data $(f_0,\vu_0,\vF_0)$ satisfies following conditions:
\begin{enumerate}
\item[C1.] $D_{F_{0k}}f_0\in H^{s-1/2}(\bbT^2)$, $f_0\in H^s(\bbT^2)$, $\vu_0,\ \vF_0\in H^s(\Om_{f_0})$, where $s\ge4$ is an integer;
\item[C2.] For two open set $\Gamma^1,\Gamma^2\subset\bbT^2$ which satisfies $\bbT^2/\Gamma^1\Subset\Gamma^2$, there exists $c_0>0$ so that:\begin{itemize}
   \item[1.]$-(1-2c_0)\le f_0\le (1-2c_0)$;
    \item[2.]$-\vN_{f_0}\cdot\uline{\nabla p_0}\ge 2c_0$ on $\Gamma^1$;
    \item[3.]$\Lambda(\vF_0)(x')\ge 2c_0$ on $\Gamma^2$.
  \end{itemize}
\end{enumerate}
In this section, we introduce a regularized system with a suitable initial data.

Consider the system \eqref{eq:els}-\eqref{ini:els} with the boundary condition for $p$ in \eqref{bc:elsp} replaced by
\begin{align}\label{bc:e-elsp}
  p=-\varepsilon \bar{\mathcal N}_f^{-1}\Delta' f \qquad \text{on}\ \Gamma_f.
\end{align}
This system requires $f$ to have a higher regularity, thus we equip it with the initial data $(f^\e_0,\vu^\e_0,\vF^\e_0)$ as defined below. Let
\begin{align}\label{ini:e-f}
  f^\e_0\eqdefa \eta_\e*f_0,
\end{align}
where $\eta_\varepsilon=\frac{1}{\varepsilon}\eta(\frac{x'}{\sqrt{\varepsilon}})$ is a mollifier. Apparently, it holds that $\e|f^\e_0|^2_{H^{s+1/2}}\le C |f_0|^2_{H^s}$ and $-(1- \frac{3}{2} c_0)\le f^\e_0\le(1-\frac{3}{2}c_0)$ when $\e$ small enough.

We define harmonic coordinate $\Phi_{f^\e_0}:\Omega_{f_0}\to\Omega_{f^\e_0}$ based on $f^\e_0$. Define
\begin{align}\label{ini:e-els}
	\vu^\e_0\eqdefa P_{f^\e_0}^{\div}\big(\vu_0\circ\Phi_{f^\e_0}^{-1}\big),\qquad \vF^\e_0\eqdefa \bar P_{f^\e_0}^{\div}\big(\vF\circ\Phi_{f^\e_0}^{-1}\big),
\end{align}
where $P_{f^\e_0}^{\div}$ and $\bar P_{f^\e_0}^{\div}$ are projection operators which map a vector field
$\Omega_{f^\e_0}$ to its divergence-free part. More precisely,
$P_{f^\e_0}^{\div}\vv=\vv-\nabla\phi$, $\bar P_{f^\e_0}^{\div}\vv=\vv-\nabla\bar\phi$ with
\begin{equation}\nonumber
  \left\{
  	\begin{array}{ll}
  		\Delta\phi=\div\vv\quad&\text{in}\quad \Omega_{f^\e_0},\\
		\phi=0\quad&\text{on}\quad \Gamma_{f^\e_0},\\
		\partial_3\phi=0\quad&\text{on}\quad \Gamma^-,
  	\end{array}
  \right.
\end{equation}
and
\begin{equation}\nonumber
  \left\{
    \begin{array}{ll}
      \Delta\bar\phi=\div\vv\quad&\text{in}\quad \Omega_{f^\e_0},\\
    \vN_{f^\e_0}\cdot\nabla\bar\phi=\vv\cdot(\pa_1'f_0-\pa_1'f^\e_0,\pa_2'f_0-\pa_2'f^\e_0,0)\quad&\text{on}\quad \Gamma_{f^\e_0},\\
    \partial_3\bar\phi=0\quad&\text{on}\quad \Gamma^-.
    \end{array}
  \right.
\end{equation}
Thus, we have $\uline{\vF^\e_{0j}}\cdot\vN^\e=0$ and $(\vu^\e_0,\vF^\e_0)$ satisfies \eqref{ini:els}.
Since $D_{\vF_{0k}}f_0\in H^{s-1/2}(\bbT^2)$, $(\vu^\e_0,\vF^\e_0)$ have the same regularity as $(\vu_0,\vF_0)$.
It is straightforward to show that $(\vu^\e_0\circ\Phi_{f^\e_0},\vF^\e_0\circ\Phi_{f^\e_0})$ converges to $(\vu_0,\vF_0)$ in $H^s(\Omega_{f_0})$ when $\e$ tends to 0 \cite{SWZ}. We define
\begin{align}
  M^s_0&= |f_0|^2_{H^s}+\sum^3_{k=1}|D_{\vF_{0,k}}f_0|^2_{H^{s- 1/2}}+||\vu_0||^2_{H^s(\Omega_{f_0})}+||\vF_0||^2_{H^s(\Omega_{f_0})},\label{eq:energy-ini}\\
  M^s_\e&= \e|f^\e_0|^2_{H^{s+1/2}}+|f^\e_0|^2_{H^{s-1/2}}+||\vu^\e_0||^2_{H^s(\Omega_{f^\e_0})}+||\vF^\e_0||^2_{H^s(\Omega_{f^\e_0})}.\label{eq:energy-ini-e}
\end{align}
We choose $\hat\e$ small enough such that for all $\e\le\hat\e$ there exists a constant $C$ independent of $\e$ satisfying
\begin{align}\label{eq:ini-energy}
  M^s_\e\le CM^s_0.
\end{align}
We call the system \eqref{eq:els}-\eqref{bc:els}, \eqref{bc:e-elsp} with initial data \eqref{ini:e-f}-\eqref{ini:e-els} {\it the $\e$-regularized system}.

For this regularized system, one can obtain (the derivation is similar to (\ref{eq:els-evo}) but more simple; one can also see \cite{LWZ} for the detailed derivation)
\begin{align}\label{eq:efb1}
	\partial_tf^\e =&\theta^\e,\\
\label{eq:efb2}
	\partial_t\theta^\e=&-2(\underline{u^\e_1}\pa_1'\theta^\e+\underline{u^\e_2}\pa_2'\theta^\e)-\underset{s,r=1,2}{\sum}\underline{u^\e_s}\underline{u^\e_r}\pa_s'\pa_r'f^\e\\\nonumber
	&-\vN_{f^\e}\cdot\underline{\nabla (p_{\vu^\e,\vu^\e}-\underset{j}{\sum}p_{\vF_j^\e,\vF_j^\e})}+\underset{j}{\sum}\underset{s,r=1,2}{\sum}\underline{F_{sj}^\e}\underline{F_{rj}^\e}\pa_s'\pa_r'f^\e+\varepsilon\Delta' f^\e.	
\end{align}
Therefore,  we can write
\begin{align*}
	D^2_tf^\e=\sum_j\sum_{s,r=1,2}\underline{F_{sj}^\e F_{rj}^\e}\pa_s'\pa_r'f^\e+\varepsilon\Delta' f^\e+L(\theta^\e,f^\e),
\end{align*}
where $L(\theta^\e,f^\e)$ represents lower order terms.

The following proposition is the main results of this section.
\begin{proposition}\label{pro:app-seq}
    Assume that $(f_0,\vu_0,\vF_0)$ satisfies C1 and C2, then there exists constants $(\bar\e,\bar T)$, such that for each $\e\in(0,\bar\e]$
    and $T\in[0,\bar T]$ the system \eqref{eq:els}-\eqref{bc:els}, \eqref{bc:e-elsp} with initial data \eqref{ini:e-f}-\eqref{ini:e-els} admits a unique solution $(f^\e,\vu^\e,\vF^\e)$ in $[0,T]$ satisfying
  \begin{itemize}
    \item[1.] $ \e|f^\e|^2_{H^{s+1/2}}+|f^\e|^2_{H^{s-1/2}}+||\vu^\e||^2_{H^s(\Omega_{f^\e})}+||\vF^\e||^2_{H^s(\Omega_{f^\e})}\le CM^s_0$;
    \item[3.] $-(1-\frac{1}{2}c_0)\le f^\e\le (1- \frac{1}{2}c_0)$;
    \item[4.]$-\vN_{f^\e}\cdot\uline{\nabla p^\e}\ge \frac{1}{2}c_0$ on $\Gamma^1$;
    \item[5.] $\Lambda(\vF^\e)(x')\ge \frac{1}{2}c_0$ on $\Gamma^2$.
  \end{itemize}
  Here the index $s$ is given in C1, and the constant $C$ is independent of $\e$ and $T$.
\end{proposition}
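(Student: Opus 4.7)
The proof proceeds in two stages. For each fixed $\varepsilon > 0$, the regularization $-\varepsilon \bar{\mathcal N}_f^{-1} \Delta' f$ in the pressure boundary condition contributes the dispersive term $\varepsilon \Delta' f^\varepsilon$ to the interface equation \eqref{eq:efb2}, which is enough to construct a solution by a standard iteration scheme. The subtler task is to establish energy estimates \emph{uniform in} $\varepsilon$, giving the time $\bar T$ of existence and the bounds $1$--$5$ of the proposition independently of $\varepsilon$. This is where the mixed stability condition enters, through the evolution equation \eqref{eq:els-evo} derived in Section~3.

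\textbf{Construction for fixed $\varepsilon$.} I would set up the iteration $(\vu^n, \vF^n, f^n) \mapsto (\vu^{n+1}, \vF^{n+1}, f^{n+1})$: solve the transport equations for $\vF^{n+1}$ along $\vu^n$, the elliptic problem for $p^{n+1}$ with the regularized boundary data $p^{n+1}|_{\Gamma_{f^n}} = -\varepsilon\bar{\mathcal N}_{f^n}^{-1}\Delta' f^n$, the momentum update for $\vu^{n+1}$, and finally the interface update $f^{n+1}$ from \eqref{eq:efb1}--\eqref{eq:efb2}, viewed as a second-order evolution equation whose leading symbol $\tau^2 + \varepsilon|\xi|^2$ is strictly hyperbolic for each fixed $\varepsilon$. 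Energy estimates combined with Lemmas \ref{lem:basic}--\ref{lem:DN-1} produce a contraction on a (possibly $\varepsilon$-dependent) time interval, yielding a unique local solution.

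\textbf{Uniform energy estimates.} Adapting the derivation of \eqref{eq:els-evo} to the regularized system gives
\begin{equation*}
D_t^2 \partial_i' f^\varepsilon = \underline{\partial_3 p^\varepsilon}\,\vN_{f^\varepsilon}\cdot\underline{\nabla \mathcal{H}_{f^\varepsilon}(\partial_i' f^\varepsilon)} + \sum_{k=1}^3 D_{F_k^\varepsilon}^2 \partial_i' f^\varepsilon + \varepsilon \Delta' \partial_i' f^\varepsilon + \mathrm{l.o.t.}
\end{equation*}
Since $\mathbb T^2\setminus\Gamma^1\Subset\Gamma^2$, choose a smooth partition of unity $\chi_1+\chi_2=1$ with $\mathrm{supp}\,\chi_1\subset\Gamma^1$ and $\mathrm{supp}\,\chi_2\subset\Gamma^2$. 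Applying tangential derivatives of total order $s-2$ to the interface equation and performing an $L^2$ energy estimate weighted by $\chi_1+\chi_2$: on $\mathrm{supp}\,\chi_1$ the Taylor sign condition together with Lemma \ref{lem:DN}(2) makes $-\underline{\partial_3 p^\varepsilon}\,\vN_{f^\varepsilon}\cdot\underline{\nabla\mathcal{H}_{f^\varepsilon}(\cdot)}$ positive of principal order one, producing control of the $H^{s-1/2}$ norm of $f^\varepsilon$ localized to $\Gamma^1$; on $\mathrm{supp}\,\chi_2$ the non-collinearity condition makes $-\sum_k D_{F_k^\varepsilon}^2$ elliptic of order two with coercivity constant $c_0$, producing control of the $H^s$ norm localized to $\Gamma^2$. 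The term $\varepsilon\Delta'$ contributes the nonnegative but $\varepsilon$-small piece $\varepsilon|f^\varepsilon|_{H^{s+1/2}}^2$. Combined with standard interior energy estimates for $\vu^\varepsilon,\vF^\varepsilon$ in $H^s(\Omega_{f^\varepsilon})$ (velocity evolution, transport equation for $\vF^\varepsilon$, elliptic control of $\nabla p^\varepsilon$ from \eqref{eq:p}), Gronwall's inequality yields an $\varepsilon$-independent existence time $\bar T$ and the claimed bounds. Continuity in time of the traces then propagates conditions 3--5 with constants at least $c_0/2$.

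\textbf{Main obstacle.} The technical heart of the estimate is the handling of commutators between the cutoffs $\chi_1,\chi_2$ and the two structurally different coercive operators, without any $\varepsilon^{-1}$ loss: the Taylor operator $-\underline{\partial_3 p^\varepsilon}\,\vN_{f^\varepsilon}\cdot\underline{\nabla\mathcal{H}_{f^\varepsilon}(\cdot)}$ is nonlocal, whereas $-\sum_k D_{F_k^\varepsilon}^2$ is second-order and anisotropic, so the two energy identities live naturally in different Sobolev norms. The key structural feature that makes the patching work is that the open overlap $\Gamma^1\cap\Gamma^2$ is nonempty and both stability constants are strictly positive there, so within the transition zone where $\nabla'\chi_i$ is supported, either coercivity mechanism alone can absorb the other's commutator remainder via the other operator's coercivity. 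Once this patching is arranged and the $\varepsilon$-uniform bound \eqref{eq:ini-energy} on the mollified initial data is used, Proposition \ref{pro:app-seq} follows.
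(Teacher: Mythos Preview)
Your strategy differs from the paper's in a way that matters. You propose a partition of unity $\chi_1+\chi_2=1$ subordinate to $\Gamma^1,\Gamma^2$ and run separate localized energy estimates on each piece. The paper does \emph{not} localize; instead it builds a single globally positive weight. Concretely, with $\mathfrak a=-\underline{\partial_3 p}$ (which may be negative on $\Gamma^2$), the paper sets $\bar{\mathfrak a}=\mathfrak a+\phi\,\tilde c$ where $\phi\in C^\infty$ is a cutoff supported in $\Gamma^2$, equal to $1$ on $\mathbb T^2\setminus\Gamma^1$, and $\tilde c$ is a constant chosen so that $\bar{\mathfrak a}\ge c_0$ everywhere; then $\tilde{\mathfrak a}$ is the harmonic extension of $\bar{\mathfrak a}$ to $\Omega_f$. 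The Taylor-type contribution to the energy is the \emph{global} quantity $\int_{\Omega_f}\tilde{\mathfrak a}\,|\nabla\mathcal H_f(\langle\nabla'\rangle^{s-3/2}\partial_i'f)|^2\,dx$, which by the maximum principle is uniformly coercive in $H^{s-1}$. The price is the correction $(\bar{\mathfrak a}-\mathfrak a)=\phi\,\tilde c$, supported in $\Gamma^2$; since non-collinearity there gives $|\nabla' g|\lesssim\sum_k|D_{F_k}g|$ \emph{pointwise}, the time derivative of this correction term is absorbed by the (also global) piece $\sum_k|D_{F_k}\langle\nabla'\rangle^{s-3/2}\partial_i'f|_{L^2}^2$ of the energy.

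Your partition-of-unity route runs into precisely the obstacle you flag: $\mathcal N_f$ is nonlocal, so $\int\chi_1\mathfrak a\,(\mathcal N_f g)\,g\,dx'$ is not coercive even though $\chi_1\mathfrak a\ge 0$---the harmonic extension of $\chi_1\mathfrak a$ is merely nonnegative, not bounded below by a positive constant. Your proposed fix (absorb the defect in the overlap by the other mechanism) is plausible but would need a Gårding-type argument you have not supplied, and the mismatch between the $H^{1/2}$-type gain from the Taylor term and the $H^1$-type gain from $\sum_k D_{F_k}^2$ makes the patching genuinely delicate. The paper's weight modification sidesteps this entirely: no cutoffs ever touch $\mathcal N_f$.

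One further gap: your last sentence (``continuity in time of the traces then propagates conditions 3--5'') hides real work for the Taylor sign. Since $p$ is determined elliptically from $(\vu,\vF,f)$, propagating $-\underline{\partial_3 p}\ge\tfrac12 c_0$ requires an $L^\infty$ bound on $D_t\underline{\partial_3 p}$. The paper obtains this (Lemma~4.1) by deriving and estimating an elliptic problem for $D_t p$, including a careful treatment of the regularized boundary contribution $-\varepsilon D_t\bar{\mathcal N}_f^{-1}\Delta' f$; this step is also what fixes $\bar\varepsilon$ so that the mollified data inherit the stability conditions with constants $\tfrac32 c_0$.
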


Before presenting the proof, we give some results which will be used. Firstly, from \cite[Theorem 1.2]{LWZ}, the $\e$-regularized system is locally well-posed. More precisely, it holds
\begin{proposition}\label{pro:reg-wp}
 Let  $s\ge4$ be an integer.  Assume that
\begin{align*}
	f^\e_0\in H^{s+ 1/2}(\bbT^2),\quad \vu^\e_0,\,\vF^\e_0\in H^s(\Om_{f^\e_0}).
\end{align*}
Furthermore, assume that there exists $\bar c_0\le0$ so that
\begin{align*}
  -(1-2\bar c_0)\le f^\e_0\le (1-2\bar c_0).
\end{align*}
Then there exists $T=T(\e, M^s_\e)>0$ such that, the system \eqref{eq:els}-\eqref{bc:els} and \eqref{bc:e-elsp} with initial data \eqref{ini:e-f}-\eqref{ini:e-els}
admits a unique solution $(f^\e, \vu^\e, \vF^\e)$ on $[0,T]$ satisfying
\begin{itemize}
	\item[1.] $f^\e\in L^\infty([0,T), H^{s+1/2}(\bbT^2))$;
	\item[2.] $\vu^\e,\,\vF^\e\in L^\infty\big(0,T;H^{s}(\Omega_{f^\e})\big)$;
	\item[3.] $-(1-\bar c_0)\le f^\e\le (1-\bar c_0)$.
\end{itemize}
Moreover,  for $t\in [0, T]$ it holds
\begin{align*}
	\e|f^\e|^2_{H^{s+1/2}}+|f^\e|^2_{H^{s-1/2}}+||\vu^\e||^2_{H^s(\Omega_{f^\e})}+||\vF^\e||^2_{H^s(\Omega_{f^\e})}\le 2M^s_\e.
\end{align*}
\end{proposition}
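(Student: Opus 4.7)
The strategy is a continuation argument built on Proposition~\ref{pro:reg-wp}. For each fixed $\e\in(0,\hat\e]$, Proposition~\ref{pro:reg-wp} already supplies a unique solution $(f^\e,\vu^\e,\vF^\e)$ on some interval $[0,T(\e,M^s_\e)]$ with the bound on the left-hand side of condition~1 by $2M^s_\e$. To upgrade this to the $\e$-independent bound $\le CM^s_0$ on the common interval $[0,\bar T]$ and to propagate the geometric and stability conditions~3--5 with halved constants, I would introduce the maximal time $T^\ast_\e\le T(\e,M^s_\e)$ on which condition~1 holds (with some fixed constant $2C$) and conditions~3--5 hold with constants $c_0/2$, prove an $\e$-independent a priori estimate on $[0,T^\ast_\e]$, and then use this estimate together with the blow-up criterion implicit in Proposition~\ref{pro:reg-wp} to conclude $T^\ast_\e\ge\bar T(c_0,M^s_0)$.

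The heart of the argument is the energy estimate on $[0,T^\ast_\e]$. For the interior unknowns $(\vu^\e,\vF^\e)$, the system~\eqref{eq:els} is symmetric hyperbolic, so standard tangential energy estimates in the harmonic coordinate, together with Lemma~\ref{lem:basic} to convert norms between $\Omega_{f^\e}$ and $\Omega_*$, control $\|\vu^\e\|_{H^s(\Omega_{f^\e})}+\|\vF^\e\|_{H^s(\Omega_{f^\e})}$ by the boundary energy plus a polynomial of itself. The boundary energy is driven by the regularized version of~\eqref{eq:els-evo}, schematically
\[
D_t^2\partial_i'f^\e=\uline{\partial_3 p^\e}\,\vN_{f^\e}\!\cdot\uline{\nabla\mathcal{H}_{f^\e}(\partial_i'f^\e)}+\sum_k D_{F^\e_k}^2\partial_i'f^\e+\e\Delta'\partial_i'f^\e+\text{lower order},
\]
so after applying $\langle\nabla'\rangle^{s-1/2}$ and pairing with $D_t\langle\nabla'\rangle^{s-1/2}\partial_i'f^\e$ one obtains three coercive contributions: the Taylor--pressure term, positive-definite on $\Gamma^1$ by Lemma~\ref{lem:DN} together with $-\uline{\partial_3 p^\e}\ge c_0/2$; the tangential elastic term $\sum_k D_{F^\e_k}^2$, tangentially elliptic on $\Gamma^2$ with ellipticity $\ge c_0/2$ from the definition of $\Lambda(\vF^\e)$; and the non-negative viscous dissipation $\e|\nabla'\langle\nabla'\rangle^{s-1/2}f^\e|_{L^2}^2$. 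A smooth partition of unity $1=\chi_1+\chi_2$ with $\mathrm{supp}\,\chi_j\Subset\Gamma^j$, permitted by the strict inclusion $\bbT^2\setminus\Gamma^1\Subset\Gamma^2$, localizes the boundary energy so that the two coercive mechanisms cover all of $\bbT^2$, with the only cost being lower-order commutators. Summing yields $\tfrac{d}{dt}E_s\le P(E_s)$ for a polynomial $P$ whose coefficients depend only on $c_0$ and $\|f_*\|_{H^{s-1/2}}$, and Gronwall delivers $E_s(t)\le CM^s_0$ on an interval $[0,\bar T_1]$ with $\bar T_1=\bar T_1(c_0,M^s_0)$ independent of $\e$.

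Propagating conditions~3--5 on $[0,\bar T_1]$, possibly after shrinking $\bar T$, is a matter of continuity. Condition~3 follows from $\partial_t f^\e=\uline{u^\e_3}$ and Sobolev embedding applied to the uniform $H^s$-bound on $\vu^\e$. Condition~4 is obtained by decomposing $p^\e$ in the spirit of~\eqref{eq:p} (with an extra harmonic piece carrying the boundary datum $-\e\bar{\mathcal{N}}_{f^\e}^{-1}\Delta'f^\e$), differentiating in $t$, and combining elliptic estimates with the uniform control of $E_s$ to obtain an $\e$-uniform Lipschitz bound for $-\vN_{f^\e}\cdot\uline{\nabla p^\e}$ in $L^\infty(\Gamma^1)$. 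Condition~5 is propagated analogously from the transport identity $D_t\uline{F^\e_{ij}}=\uline{F^\e_{kj}\partial_k u^\e_i}$, yielding a uniform Lipschitz-in-time lower bound for $\Lambda(\vF^\e)$. Choosing $\bar T\le\bar T_1$ small enough that the deterioration in conditions~3--5 does not exceed $c_0$ gives $T^\ast_\e\ge\bar T$ for every $\e\le\bar\e$. Finally, since $M^s_\e$ and the distance of $f^\e$ from $\pm 1$ are controlled $\e$-uniformly on $[0,T^\ast_\e]$, the blow-up criterion of Proposition~\ref{pro:reg-wp} lets one reapply that proposition finitely many times to extend the solution from $T(\e,M^s_\e)$ all the way to $\bar T$, and uniqueness is inherited from Proposition~\ref{pro:reg-wp} at each step.

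The main obstacle is building a single energy $E_s$ that reflects \emph{both} stability mechanisms simultaneously: the Taylor-sign contribution is of tangential order zero (a positive multiplier promoted to order one by the Dirichlet--Neumann operator), while the non-collinearity contribution is of order two in the tangential variables, and they are coercive only on their respective overlapping patches of $\bbT^2$. Designing the partition of unity and the associated commutator bookkeeping so that the two coercivities add rather than cancel through cross-terms, while retaining only $\e$-independent quantities on the right-hand side, is the delicate point; this is precisely the strict hyperbolicity of the interface equation highlighted in the introduction.
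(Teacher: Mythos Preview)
You have misidentified the statement. Proposition~\ref{pro:reg-wp} is the \emph{fixed-$\e$} local well-posedness result: given $\e>0$ and data with $f^\e_0\in H^{s+1/2}$, it asserts existence on an interval $T=T(\e,M^s_\e)$ that may depend on $\e$, with no stability condition assumed. Your opening line, ``the strategy is a continuation argument built on Proposition~\ref{pro:reg-wp},'' is circular---you invoke the very result you are asked to prove. What you have actually sketched is a proof of Proposition~\ref{pro:app-seq} (the $\e$-uniform lifespan and bounds), which \emph{uses} Proposition~\ref{pro:reg-wp} as input.

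In the paper, Proposition~\ref{pro:reg-wp} is not proved at all; it is quoted from \cite[Theorem~1.2]{LWZ}. The mechanism there is that the term $\e\Delta' f^\e$ coming from the modified pressure condition~\eqref{bc:e-elsp} renders the interface evolution~\eqref{eq:efb1}--\eqref{eq:efb2} strictly parabolic in $f^\e$, so one can close an iteration scheme in $H^{s+1/2}$ without any Taylor-sign or non-collinearity hypothesis. The mixed stability condition plays no role in this proposition.

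Read as an attempt at Proposition~\ref{pro:app-seq}, your overall architecture (local existence $\to$ $\e$-uniform a~priori estimate $\to$ continuation) matches Section~4.3, and your treatment of conditions~3--5 mirrors Lemma~\ref{lem:tay-con}. The substantive difference is how the two coercive mechanisms are merged. You propose a partition of unity $\chi_1+\chi_2$ and localize the boundary energy to $\Gamma^1,\Gamma^2$. The paper avoids this: it builds a single global weight by setting $\bar{\mathfrak a}=-\uline{\pa_3 p}+\tilde c\,\phi$ with $\phi$ a cutoff supported in $\Gamma^2$, chosen so that $\bar{\mathfrak a}\ge c_0$ on all of $\bbT^2$, and then takes $\tilde{\mathfrak a}=\cH_f\bar{\mathfrak a}$. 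The resulting energy $\int_{\Omega_f}\tilde{\mathfrak a}\,|\nabla\cH_f(\langle\nabla'\rangle^{s-3/2}\partial_i'f)|^2\,dx$ is globally equivalent to $|\partial_i'f|^2_{H^{s-1}}$, and the correction $\bar{\mathfrak a}-\mathfrak a=\tilde c\phi$ is supported where the elastic term $\sum_k D_{F_k}^2$ provides the missing half-derivative. This sidesteps the commutators $[\chi_j,\cN_f]$ and $[\chi_j,D_{F_k}^2]$ that your partition-of-unity route would generate; those commutators are in principle controllable, but the weight construction is cleaner.
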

To extend the solution to a time interval independent of $\e$,  we have to derive a uniform (in $\e$) a priori estimate for the solution $(f^\e, \vu^\e, \vF^\e)$.
We will drop the superscript $\e$ of $(f^\e, \vu^\e, \vF^\e, p^\e)$ in the rest of this section for convenience.

The following estimates will be frequently used in this section. In these estimates, we assume $f\in C^\infty(\bbT^2)$ and $\vu\in C^\infty (\Omega_f)$.
The proofs of these lemmas can be found in Appendix. We also remark that the commutator estimates below are inspired from \cite{SZ1}.
\begin{lemma}\label{lem:com-f-ds}
For any function $a\in H^s(\bbT^2)$ with $s>2$, we have
\begin{equation}
	\big|[a, \langle\nabla'\rangle^s]f\big|_{L^2}\le C|a|_{H^{s}}|f|_{H^{s-1}}.
\end{equation}
Here $\nna^s$ is the $s$-order derivatives on $\bbT^2$ which defined as follows
\begin{align*}
	\widehat{\langle \nabla'\rangle^s f}(\mathbf{k}) = \left( 1 + |\mathbf{k}|^2 \right)^{s/2}\hat{f}(\mathbf{k}), \quad \mathbf{k}=(k_1,k_2), \quad k_1,k_2\in\mathbb{Z}.
\end{align*}
\end{lemma}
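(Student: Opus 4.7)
\textbf{Proof plan for Lemma \ref{lem:com-f-ds}.} I will work on the Fourier side on $\bbT^2$. Writing $a(x')=\sum_{\mathbf{k}}\hat a(\mathbf{k})e^{i\mathbf{k}\cdot x'}$ and similarly for $f$, the Fourier coefficient of $[a,\langle\nabla'\rangle^s]f$ at $\mathbf{k}$ equals
\[
\widehat{[a,\nna^s]f}(\mathbf{k})=\sum_{\mathbf{k}_1+\mathbf{k}_2=\mathbf{k}}\bigl(\langle\mathbf{k}\rangle^s-\langle\mathbf{k}_2\rangle^s\bigr)\hat a(\mathbf{k}_1)\hat f(\mathbf{k}_2),
\]
with $\langle\mathbf{k}\rangle=(1+|\mathbf{k}|^2)^{1/2}$. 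The plan is the usual Kato--Ponce-style splitting: partition the summation region into $A=\{|\mathbf{k}_1|\le|\mathbf{k}_2|/2\}$ (low-high, where $a$ is low-frequency) and $B=\{|\mathbf{k}_1|>|\mathbf{k}_2|/2\}$ (high-high or high-low, where $a$ carries most derivatives), and estimate each contribution in $\ell^2_{\mathbf{k}}$ separately.

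On region $A$, applying the mean value theorem to $t\mapsto\langle\mathbf{k}_2+t\mathbf{k}_1\rangle^s$ yields $|\langle\mathbf{k}\rangle^s-\langle\mathbf{k}_2\rangle^s|\lesssim|\mathbf{k}_1|\langle\mathbf{k}_2\rangle^{s-1}$ because $\langle\mathbf{k}\rangle\simeq\langle\mathbf{k}_2\rangle$ in this regime. Introducing $G_1,G_2$ with $\hat G_1(\mathbf{k}_1)=|\mathbf{k}_1||\hat a(\mathbf{k}_1)|$ and $\hat G_2(\mathbf{k}_2)=\langle\mathbf{k}_2\rangle^{s-1}|\hat f(\mathbf{k}_2)|$, the contribution from $A$ is pointwise dominated in $\ell^2_{\mathbf{k}}$ by $\|G_1G_2\|_{L^2(\bbT^2)}\le\|G_1\|_{L^\infty}\|G_2\|_{L^2}$. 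Plancherel gives $\|G_2\|_{L^2}\le|f|_{H^{s-1}}$, and the Sobolev embedding $H^{s-1}(\bbT^2)\hookrightarrow L^\infty(\bbT^2)$, valid since $s>2$, yields $\|G_1\|_{L^\infty}\lesssim\|G_1\|_{H^{s-1}}\le|a|_{H^s}$.

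On region $B$, we have $\langle\mathbf{k}\rangle,\langle\mathbf{k}_2\rangle\lesssim\langle\mathbf{k}_1\rangle$, so $|\langle\mathbf{k}\rangle^s-\langle\mathbf{k}_2\rangle^s|\lesssim\langle\mathbf{k}_1\rangle^s$. Setting $\hat H_1(\mathbf{k}_1)=\langle\mathbf{k}_1\rangle^s|\hat a(\mathbf{k}_1)|$ and $\hat H_2(\mathbf{k}_2)=|\hat f(\mathbf{k}_2)|$, the same pointwise-in-Fourier majorization together with $\|H_1H_2\|_{L^2}\le\|H_1\|_{L^2}\|H_2\|_{L^\infty}$ gives a bound by $|a|_{H^s}|f|_{L^\infty}\lesssim|a|_{H^s}|f|_{H^{s-1}}$, again by the Sobolev embedding $H^{s-1}(\bbT^2)\hookrightarrow L^\infty(\bbT^2)$. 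Summing the two contributions completes the proof.

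The only subtlety is making sure the mean-value step on region $A$ really produces the factor $|\mathbf{k}_1|\langle\mathbf{k}_2\rangle^{s-1}$ (not $|\mathbf{k}_1|\langle\mathbf{k}\rangle^{s-1}$), which is why the restriction $|\mathbf{k}_1|\le|\mathbf{k}_2|/2$ is imposed so that $\langle\mathbf{k}_2+t\mathbf{k}_1\rangle$ is comparable to $\langle\mathbf{k}_2\rangle$ uniformly in $t\in[0,1]$. Once this frequency-localized comparison is clean, everything else reduces to Plancherel and the two-dimensional Sobolev embedding, both of which use precisely the hypothesis $s>2$.
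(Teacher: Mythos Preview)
Your argument is correct: the Fourier-side splitting into the low--high region $A$ and the high--high/high--low region $B$, together with the mean-value bound on $A$ and the crude bound on $B$, is exactly the standard Kato--Ponce mechanism, and your use of the embedding $H^{s-1}(\bbT^2)\hookrightarrow L^\infty$ (which needs precisely $s>2$) is the right way to close both pieces. The only minor point worth making explicit is that the functions $G_1,G_2,H_1,H_2$ have nonnegative Fourier coefficients, so extending the convolution from the restricted region ($A$ or $B$) to the full lattice only increases the sum; this justifies the pointwise-in-Fourier majorization by $\widehat{G_1G_2}(\mathbf k)$ and $\widehat{H_1H_2}(\mathbf k)$.

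As for comparison with the paper: the paper does not actually supply a proof of this lemma. It is stated in the main text with a reference to the appendix, and then restated in the appendix (Lemma~A.1) without proof, being treated as a standard commutator estimate. Your write-up therefore fills in what the authors regard as routine, and it does so by the expected route.
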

\begin{corollary}\label{cor:com-dt-ds}
		For $s>2$, one has
		\begin{align*}
			\big|[D_t,\langle\nabla'\rangle^s]f\big|_{L^2}\le C\|\vu\|_{H^{s+1/2}(\Omega_f)}|f|_{H^{s}}.
		\end{align*}	
\end{corollary}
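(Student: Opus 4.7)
The plan is to reduce the commutator $[D_t,\langle\nabla'\rangle^s]$ on $\Gamma_f$ to a tangential commutator involving only the boundary trace of $\vu$, apply Lemma 4.1 to that, and then close by the trace theorem.

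First I observe that $\partial_t$ commutes with the purely spatial multiplier $\langle\nabla'\rangle^s$, so since $D_t=\partial_t+\underline{u_1}\partial_1'+\underline{u_2}\partial_2'$ on $\Gamma_f$,
\begin{align*}
[D_t,\langle\nabla'\rangle^s]f=[\underline{u_j}\partial_j',\langle\nabla'\rangle^s]f.
\end{align*}
Using that $\partial_j'$ also commutes with $\langle\nabla'\rangle^s$, a one-line rearrangement (expanding both terms and cancelling $\underline{u_j}\langle\nabla'\rangle^s\partial_j'f$) gives
\begin{align*}
[\underline{u_j}\partial_j',\langle\nabla'\rangle^s]f=[\underline{u_j},\langle\nabla'\rangle^s]\partial_j'f.
\end{align*}
This is the key reformulation: the dangerous operator order in $D_t$ has been turned into a zeroth-order commutator acting on a function whose Sobolev index is now only $s-1$.

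Next I apply Lemma 4.1 with $a=\underline{u_j}\in H^s(\bbT^2)$ (valid because $s\ge 2$ is ensured under $s>2$) to the function $\partial_j'f\in H^{s-1}$, yielding
\begin{align*}
\big|[\underline{u_j},\langle\nabla'\rangle^s]\partial_j'f\big|_{L^2}\le C\,|\underline{u_j}|_{H^s}\,|\partial_j'f|_{H^{s-1}}\le C\,|\underline{u_j}|_{H^s}\,|f|_{H^s}.
\end{align*}
Finally, by the standard trace inequality on $\Omega_f$ (with trace loss of $1/2$ derivative) together with the uniform control of the harmonic coordinate $\Phi_f$ provided by Lemma 2.1, we have $|\underline{u_j}|_{H^s(\bbT^2)}\le C\|\vu\|_{H^{s+1/2}(\Omega_f)}$. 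Summing over $j=1,2$ yields the claimed estimate.

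The only non-mechanical step is the algebraic identity $[\underline{u_j}\partial_j',\langle\nabla'\rangle^s]f=[\underline{u_j},\langle\nabla'\rangle^s]\partial_j'f$; everything else is a direct invocation of Lemma 4.1 and the trace theorem, so I do not anticipate any essential obstacle.
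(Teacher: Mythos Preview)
Your proof is correct and is exactly the argument the paper has in mind: the paper states this result as an immediate corollary of Lemma~4.1 without giving a separate proof, and your reduction $[D_t,\langle\nabla'\rangle^s]f=[\underline{u_j},\langle\nabla'\rangle^s]\partial_j'f$ followed by Lemma~4.1 and the trace estimate $|\underline{u_j}|_{H^s}\le C\|\vu\|_{H^{s+1/2}(\Omega_f)}$ is precisely the intended deduction.
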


\begin{lemma}\label{lem:com-t-DN}
	For any function $g\in H^{s+1}(\bbT^2)$ with $s\ge \frac{3}{2}$, it holds that
	\begin{align*}
		|[\bar{\mathcal N}_f,D_t]g|_{H^{s}}\lesssim |f|^3_{H^{s+1}}||\vu||_{H^{s+3/2}(\Omega_f)}|g|_{H^{s+1}}.
	\end{align*}
\end{lemma}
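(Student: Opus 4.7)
The plan is to reduce the commutator $[\bar{\mathcal N}_f,D_t]g$ to the normal trace of a corrector that solves an elliptic boundary value problem with vanishing Dirichlet data on $\Gamma_f$, plus two strictly lower-order boundary remainders, and then to close the estimate by elliptic regularity together with Lemma~\ref{lem:basic}. Set $u\eqdefa\bar{\mathcal H}_fg$ and extend $D_t$ to the bulk as $\tilde D_t\eqdefa\pa_t+\vu\cdot\nabla$. Using the identity $\uline{\tilde D_tw}=D_t\uline{w}$ (applied componentwise) together with $[\tilde D_t,\pa_j]=-(\pa_ju_i)\pa_i$, a direct computation gives
\begin{align*}
  [\bar{\mathcal N}_f,D_t]g=\vN_f\cdot\uline{\nabla v}-(D_t\vN_f)\cdot\uline{\nabla u}+(\vN_f)_j\uline{(\pa_ju_i)(\pa_iu)},
\end{align*}
where the corrector is defined by $v\eqdefa\bar{\mathcal H}_f(D_tg)-\tilde D_tu$.

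Next I would observe that, since $\uline{\tilde D_tu}=D_t\uline{u}=D_tg$, and since $u_3=\pa_3u=0$ on $\Gamma^-$ reduces $\pa_3\tilde D_tu|_{\Gamma^-}$ to the tangential piece only, the corrector $v$ solves
\begin{align*}
  \Delta v=-[\Delta,\tilde D_t]u=-(\Delta u_i)\pa_iu-2\nabla u_i\cdot\nabla\pa_iu\ \text{ in }\Omega_f,\qquad v|_{\Gamma_f}=0,\qquad \pa_3v|_{\Gamma^-}=-\sum_{i=1}^{2}(\pa_3u_i)(\pa_iu)\big|_{\Gamma^-}.
\end{align*}
Applying standard elliptic regularity for this mixed Dirichlet--Neumann problem, pulled back by $\Phi_f$ to $\Omega_*$ and combined with Lemma~\ref{lem:basic}, yields
\begin{align*}
  \|v\|_{H^{s+3/2}(\Omega_f)}\lesssim C(|f|_{H^{s+1}})\,\|\vu\|_{H^{s+3/2}(\Omega_f)}\,\|u\|_{H^{s+3/2}(\Omega_f)},
\end{align*}
and combining this with the bound $\|u\|_{H^{s+3/2}(\Omega_f)}\lesssim C(|f|_{H^{s+1}})|g|_{H^{s+1}}$ for $\bar{\mathcal H}_f$ and the trace inequality produces the principal contribution $|\vN_f\cdot\uline{\nabla v}|_{H^s}\lesssim |f|^3_{H^{s+1}}\|\vu\|_{H^{s+3/2}(\Omega_f)}|g|_{H^{s+1}}$.

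The two remaining boundary terms are handled by the tame product rule on $\bbT^2$. For $(D_t\vN_f)\cdot\uline{\nabla u}$ I would expand $D_t\vN_f$ in terms of $\nabla'\uline{\vu}$ and $\nabla'f$ via $D_tf=\uline{u_3}$; for the trilinear trace product $(\vN_f)_j\uline{(\pa_ju_i)(\pa_iu)}$ I would invoke the $H^s$ algebra property and the trace theorem. Both contributions are dominated by $C(|f|_{H^{s+1}})\|\vu\|_{H^{s+1}(\Omega_f)}|g|_{H^{s+1}}$, hence are subsumed by the main bound.

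The hard part will be the elliptic estimate for $v$: in contrast to the Dirichlet-type analogue governing $\mathcal N_f$, here the Neumann datum on $\Gamma^-$ does not vanish in general, so one must carry it through the pull-back $\Phi_f^{-1}$ to the fixed domain $\Omega_*$ and carefully track the $f$-dependence of the coefficients of the transformed Laplacian. The chaining of three $f$-dependent estimates, each contributing a factor $C(|f|_{H^{s+1}})$, is precisely what generates the cubic power $|f|^3_{H^{s+1}}$ on the right-hand side of the claimed inequality.
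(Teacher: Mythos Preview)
Your proposal is correct and follows essentially the same route as the paper's proof: both reduce the commutator to the normal trace of the corrector $v=-[D_t,\bar{\mathcal H}_f]g$ (which you package as a single mixed Dirichlet--Neumann problem, while the paper splits it as $\bar\Delta^{-1}(\cdots)+\bar{\bar{\mathcal H}}_fg$) plus lower-order boundary remainders coming from $D_t\vN_f$ and from $[\tilde D_t,\nabla]$. The only cosmetic difference is that the paper invokes an explicit tangential/normal decomposition of $D_t\vN_f$ (Lemma~\ref{lem-basic-n}) to display the remainder terms, whereas you estimate $(D_t\vN_f)\cdot\uline{\nabla u}$ directly via the tame product rule; either approach closes the estimate.
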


\begin{lemma}\label{lem:com-f-DN}
	For any functions $a\in H^{3/2}(\bbT^2)$, $g\in H^{1/2}(\bbT^2)$, it holds that
	\begin{align*}
		|[\cN_f,a]g|_{L^2}\lesssim |f|_{H^3}|a|_{H^{3/2}}|g|_{H^{1/2}}.	
	\end{align*}
\end{lemma}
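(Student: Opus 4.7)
The plan is to reduce the $L^2$ commutator bound to elliptic estimates on $\Om_f$ via a suitable extension of $a$. I would set $\tilde a\eqdefa\cH_f a$, so that $\tilde a|_{\Gamma_f}=a$, $\tilde a|_{\Gamma^-}=0$ and $\Delta\tilde a=0$ in $\Om_f$, and introduce the remainder $w\eqdefa\cH_f(ag)-\tilde a\,\cH_f g$. A direct computation gives $w|_{\partial\Om_f}=0$ and $\Delta w=-2\nabla\tilde a\cdot\nabla\cH_f g$ in $\Om_f$. Applying $\vN_f\cdot\nabla(\cdot)|_{\Gamma_f}$ to the decomposition $\cH_f(ag)=\tilde a\,\cH_f g+w$ and using $\tilde a|_{\Gamma_f}=a$, $\cH_f g|_{\Gamma_f}=g$ yields the key identity
$$[\cN_f,a]g = g\,\cN_f a + \vN_f\cdot\nabla w\big|_{\Gamma_f},$$
which splits the commutator into a ``multiplicative'' piece and an ``elliptic remainder'' piece.

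The first piece is handled directly: by H\"older and the 2D Sobolev embedding $H^{1/2}(\bbT^2)\hookrightarrow L^4(\bbT^2)$,
$$|g\,\cN_f a|_{L^2}\le|g|_{L^4}|\cN_f a|_{L^4}\lesssim|g|_{H^{1/2}}|\cN_f a|_{H^{1/2}}\lesssim C(|f|_{H^3})|g|_{H^{1/2}}|a|_{H^{3/2}},$$
using the standard mapping property $\cN_f:H^{3/2}(\bbT^2)\to H^{1/2}(\bbT^2)$ with constant in terms of $|f|_{H^3}$.

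For the elliptic remainder piece, I would argue by duality. For arbitrary $\psi\in L^2(\bbT^2)$, test with the harmonic extension $\cH_f\psi$ (harmonic, equal to $\psi$ on $\Gamma_f$, vanishing on $\Gamma^-$). Green's identity, combined with $w|_{\partial\Om_f}=0$ and $\Delta\cH_f\psi=0$ (the latter kills the cross gradient term after one more integration by parts), produces
$$\int_{\bbT^2}\vN_f\cdot\nabla w|_{\Gamma_f}\,\psi\,dx' \;=\; \int_{\Om_f}\Delta w\cdot\cH_f\psi\,dx \;=\; -2\int_{\Om_f}\nabla\tilde a\cdot\nabla\cH_f g\cdot\cH_f\psi\,dx.$$
Applying H\"older with exponents $(6,2,3)$ and the 3D embeddings $H^1(\Om_f)\hookrightarrow L^6(\Om_f)$, $H^{1/2}(\Om_f)\hookrightarrow L^3(\Om_f)$, together with the elliptic bounds $\|\cH_f a\|_{H^2(\Om_f)}\lesssim C(|f|_{H^3})|a|_{H^{3/2}}$, $\|\cH_f g\|_{H^1(\Om_f)}\lesssim|g|_{H^{1/2}}$ and $\|\cH_f\psi\|_{H^{1/2}(\Om_f)}\lesssim|\psi|_{L^2}$, bounds the right-hand side by $C(|f|_{H^3})|a|_{H^{3/2}}|g|_{H^{1/2}}|\psi|_{L^2}$. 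Taking the supremum over $|\psi|_{L^2}=1$ controls $\vN_f\cdot\nabla w|_{\Gamma_f}$ in $L^2(\bbT^2)$; combined with the first piece this yields the lemma.

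The main obstacle is the elliptic estimate $\cH_f:H^{3/2}(\bbT^2)\to H^2(\Om_f)$ with constant depending only on $|f|_{H^3}$: this is the threshold at which $\Phi_f\in H^{7/2}(\Om_*)$ makes the coefficients of the pulled-back Laplacian regular enough to sustain $H^2$ regularity for the Dirichlet problem with $H^{3/2}$ boundary data, and it is precisely how the norm $|f|_{H^3}$ (rather than a lower one) enters the statement of the lemma.
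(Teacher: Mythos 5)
Your proof is correct and follows essentially the same route as the paper: the identical decomposition $[\cN_f,a]g=g\,\cN_f a-2\vN_f\cdot\uline{\nabla\Delta^{-1}(\nabla\cH_f a\cdot\nabla\cH_f g)}$ (your $w$ is exactly this Dirichlet corrector), the same $L^4\times L^4$ bound for the multiplicative piece, and the same H\"older/Sobolev estimate with exponents $(6,2,3)$ for the remainder. The only difference is organizational: the paper first invokes the elliptic trace bound $|\vN_f\cdot\uline{\nabla\Delta^{-1}h}|_{L^2}\lesssim|f|_{H^{5/2}}\|h\|_{H^{-1/2}(\Om_f)}$ and then dualizes in the interior, whereas you dualize the boundary term directly against $\cH_f\psi$ via Green's identity — the resulting estimates coincide.
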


\begin{lemma}\label{lem:DN-b}
	For any functions $a \in H^{3/2}(\bbT^2)$, $g\in H^{1/2}(\bbT^2)$, it holds that
	\begin{align*}
		\int_{\bbT^2} a(\cN_fg)gdx'\lesssim |f|_{H^3}|a|_{H^{3/2}}|g|^2_{H^{1/2}}.
	\end{align*}
\end{lemma}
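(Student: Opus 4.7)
The plan is to convert the boundary integral into a volume integral over $\Omega_f$ by using the definition $\cN_fg=\vN_f\cdot\uline{\nabla\cH_fg}$ together with Green's identity, apply Cauchy--Schwarz to split the result into two factors each controlled via Lemma~2.2, and close the argument with a two-dimensional Sobolev multiplier estimate $|ag|_{H^{1/2}}\lesssim|a|_{H^{3/2}}|g|_{H^{1/2}}$. This follows the same philosophy as the earlier Dirichlet--Neumann lemmas in this section: push the analysis into the bulk domain where elliptic estimates are clean.

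Concretely, I would first set $u\eqdefa\cH_f(ag)$, the harmonic extension that equals $ag$ on $\Gamma_f$ and vanishes on $\Gamma^-$. Using $dS=|\vN_f|dx'$ on $\Gamma_f$, the left-hand side rewrites as the surface integral
\begin{align*}
\int_{\bbT^2} a(\cN_fg)g\,dx' = \int_{\Gamma_f} u\,\vn_f\cdot\nabla\cH_fg\,dS.
\end{align*}
Applying the divergence theorem to the vector field $u\nabla\cH_fg$ on $\Omega_f$ and exploiting both $\Delta\cH_fg=0$ in the interior and the vanishing of $u$ on $\Gamma^-$ (so that the lower boundary contribution drops), I obtain the volume identity
\begin{align*}
\int_{\bbT^2} a(\cN_fg)g\,dx' = \int_{\Omega_f}\nabla\cH_f(ag)\cdot\nabla\cH_fg\,dx.
\end{align*}

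Second, Cauchy--Schwarz in $L^2(\Omega_f)$ together with the identity $\|\nabla\cH_fh\|_{L^2(\Omega_f)}^2=(\cN_fh,h)$ from Lemma~2.2 yields
\begin{align*}
\Big|\int_{\bbT^2} a(\cN_fg)g\,dx'\Big| \le (\cN_f(ag),ag)^{1/2}(\cN_fg,g)^{1/2} \lesssim C(|f|_{H^3})\,|ag|_{H^{1/2}}\,|g|_{H^{1/2}},
\end{align*}
where the last inequality uses the boundedness $\cN_f:H^{1/2}(\bbT^2)\to H^{-1/2}(\bbT^2)$, whose operator norm depends on $|f|_{H^3}$ through the coefficients of the elliptic problem defining $\cH_f$ (the same elliptic estimate that is used repeatedly in Section~2).

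The one delicate ingredient, which I would treat with care, is the two-dimensional product estimate $|ag|_{H^{1/2}(\bbT^2)}\lesssim|a|_{H^{3/2}(\bbT^2)}|g|_{H^{1/2}(\bbT^2)}$, since $H^{1/2}(\bbT^2)$ is not itself an algebra. The cleanest route is via the Bony paraproduct decomposition $ag=T_ag+T_ga+R(a,g)$: the low-high paraproduct $T_ag$ is bounded by $|a|_{L^\infty}|g|_{H^{1/2}}$, which is absorbed into $|a|_{H^{3/2}}|g|_{H^{1/2}}$ through the embedding $H^{3/2}(\bbT^2)\hookrightarrow L^\infty(\bbT^2)$ (which holds because $3/2>n/2=1$); the symmetric paraproduct $T_ga$ is controlled by the same quantity since the extra derivative on $a$ exactly compensates the half-derivative loss; and the resonant remainder $R(a,g)$ is handled by dyadic summation via Bernstein's inequality. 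Combining these three ingredients with the volume estimate above yields the lemma, with the $|f|_{H^3}$ dependence entering precisely through the operator norm of $\cN_f$.
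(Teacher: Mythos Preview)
Your argument is correct and somewhat slicker than the paper's, but the route is genuinely different. You extend the \emph{product} $ag$ harmonically and use Cauchy--Schwarz in the bulk to reduce everything to the single product estimate $|ag|_{H^{1/2}}\lesssim|a|_{H^{3/2}}|g|_{H^{1/2}}$, which you then justify via paraproducts. The paper instead extends $a$ alone, writing
\[
\int_{\bbT^2}a(\cN_fg)g\,dx'=\int_{\Omega_f}\tilde a\,|\nabla\cH_fg|^2\,dx+\tfrac12\int_{\bbT^2}(\cN_fa)\,g^2\,dx',
\]
and then bounds the two pieces by $|a|_{L^\infty}|g|_{H^{1/2}}^2$ and $|\cN_fa|_{L^2}|g|_{L^4}^2$ respectively, using only the elementary embeddings $H^{3/2}(\bbT^2)\hookrightarrow L^\infty$ and $H^{1/2}(\bbT^2)\hookrightarrow L^4$. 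Your version is shorter and conceptually cleaner; the paper's is more self-contained in that it avoids the $H^{1/2}$ product rule (which, while standard, sits at the endpoint $s=\min(s_1,s_2)$ and thus needs a bit of care). Both give the same $|f|_{H^3}$ dependence through the mapping bound for $\cN_f$.
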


\begin{lemma}\label{lem:com-t-DN-2}
For any function $a\in H^{5/2}(\bbT^2)$, $g\in H^{1/2}(\bbT^2)$, it holds that
	\begin{align*}
		\int_{\bbT^2}a([D_t,\cN_f]g)gdx'\le C ||a||_{H^{5/2}}|f|^2_{H^4}||\vu||_{H^{4}(\Omega_f)}|g|^2_{H^{1/2}}.
	\end{align*}
	
\end{lemma}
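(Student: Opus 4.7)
The plan is to compute $[D_t,\cN_f]g$ explicitly through the harmonic extension $\phi:=\cH_f g$, convert the pairing with $ag$ into a bulk integral over $\Om_f$, and integrate by parts once, using $\div\vu=0$ to cancel the uncontrollable second-derivative-of-$\vu$ term that would otherwise arise. The underlying difficulty is that $g\in H^{1/2}(\bbT^2)$ only gives $\phi\in H^1(\Om_f)$, so $\na^2\phi$ must not be left in $L^2$.

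Combining $[D_t,\pa_i]=-(\pa_iu_k)\pa_k$ in the bulk with the boundary identity $\uline{D_t v}=D_t\uline{v}$, a direct calculation gives
\begin{align*}
[D_t,\cN_f]g=\underbrace{D_t\vN_f\cdot\uline{\na\phi}-\vN_f\cdot\uline{(\na\vu)^\top\na\phi}}_{R}+\vN_f\cdot\uline{\na\psi},
\end{align*}
where $\psi:=D_t\phi-\cH_f(D_t g)$. Since $\uline{D_t\phi}=D_t g$, and $\phi,u_3$ both vanish on $\Gamma^-$, one has $\psi=0$ on $\pa\Om_f$; using $\Delta\phi=0$ and $\div\vu=0$,
\begin{align*}
\Delta\psi=\Delta(\vu\cdot\na\phi)=(\Delta\vu)\cdot\na\phi+2\na\vu:\na^2\phi\quad\text{in }\Om_f.
\end{align*}

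The remainder $R$ is purely boundary data. From $\phi|_{\Gamma_f}=g$ and the definition of $\cN_f$ one derives
\begin{align*}
\uline{\pa_3\phi}=\frac{\cN_f g+\pa_i'f\cdot\pa_i'g}{|\vN_f|^2},\qquad\uline{\pa_i\phi}=\pa_i'g-\pa_i'f\cdot\uline{\pa_3\phi}\ \ (i=1,2),
\end{align*}
so $\uline{\na\phi}\in H^{-1/2}(\bbT^2)$ with norm $\le C(|f|_{H^3})|g|_{H^{1/2}}$. Since $a\in H^{5/2}\hookrightarrow C^1$ and the coefficients $D_t\vN_f$, $\vN_f\cdot\uline{(\na\vu)^\top}$ are bounded in $H^{3/2}(\bbT^2)\hookrightarrow L^\infty$ by $C|f|_{H^4}\|\vu\|_{H^4(\Om_f)}$, the $H^{1/2}$--$H^{-1/2}$ duality then yields the desired control for $\int_{\bbT^2}a\,R\,g\,dx'$.

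For the main piece, Green's second identity (using that $\cH_f(ag)$ is harmonic and vanishes on $\Gamma^-$, while $\psi$ vanishes on $\pa\Om_f$) gives
\begin{align*}
\int_{\bbT^2}ag\,(\vN_f\cdot\uline{\na\psi})\,dx'=\int_{\Om_f}\cH_f(ag)\,\Delta\psi\,dx.
\end{align*}
The contribution from $(\Delta\vu)\cdot\na\phi$ is bounded by $\|\cH_f(ag)\|_{L^2}\|\Delta\vu\|_{L^\infty}\|\na\phi\|_{L^2}$, which fits the target by Lemma \ref{lem:basic}, Lemma \ref{lem:DN}, and the embedding $H^2(\Om_f)\hookrightarrow L^\infty$. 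The delicate piece is $2\int\cH_f(ag)(\pa_iu_k)(\pa_i\pa_k\phi)\,dx$; integrating by parts in $\pa_k$, the $\Gamma^-$ boundary vanishes since $\cH_f(ag)=0$ there, and the interior term involving $\cH_f(ag)(\pa_k\pa_iu_k)(\pa_i\phi)$ cancels upon summation in $k$ thanks to $\sum_k\pa_k\pa_iu_k=\pa_i(\div\vu)=0$. What remains is a bulk integral bounded by $\|\na\cH_f(ag)\|_{L^2}\|\na\vu\|_{L^\infty}\|\na\phi\|_{L^2}$ and a boundary integral on $\Gamma_f$ of the form $2\int_{\Gamma_f}ag\,(\na\phi)\cdot((\na\vu)^\top\vn_f)\,dS$, treated exactly as $R$ above by $H^{1/2}$--$H^{-1/2}$ duality. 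The main obstacle is precisely the $\na^2\phi$ factor in $\Delta\psi$; $\div\vu=0$ is essential, since it is what kills the spurious second-derivative-of-$\vu$ term produced by the integration by parts, so that exactly one derivative off $\phi$ suffices to close the estimate.
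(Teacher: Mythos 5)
Your proof is correct and follows essentially the same route as the paper's: an explicit formula for $[D_t,\cN_f]g$ via the harmonic extension, Green's identity to move the pairing into the bulk, and a single integration by parts exploiting $\div\vu=0$ to avoid leaving $\nabla^2\cH_fg$ in $L^2$. The only cosmetic differences are that you extend the product $ag$ harmonically instead of multiplying the harmonic extensions $\cH_fa\cdot\cH_fg$, and you bound the resulting boundary terms directly by $H^{1/2}$--$H^{-1/2}$ duality rather than exhibiting their cancellation against the $D_t\vN_f$ and $\vN_f\cdot\uline{\nabla\vu}$ pieces as the paper does.
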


\begin{lemma}\cite{La}\label{lem:com-ds-DN}
  For any function $g\in H^{s-3/2}(\bbT^2)$ with $s\ge3$, 
  it holds that 
  \begin{align*}
    |[\nna^{s-3/2},\cN_f]g|_{L^2}\lesssim |f|^2_{H^{s-1/2}}|g|_{H^{s-3/2}}.
   \end{align*}
\end{lemma}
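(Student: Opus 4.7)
The plan is to exploit the explicit boundary representation $\cN_f g=-\pa_i f\,\underline{\pa_i\phi}+\underline{\pa_3\phi}$ with $\phi=\cH_f g$, combined with the key observation that the flat Dirichlet-Neumann operator $\cN_0=\sqrt{-\Delta'}$ (on the slab $\bbT^2\times[-1,0]$ with the given bottom Neumann/Dirichlet condition) is a tangential Fourier multiplier and so commutes exactly with $\nna^{s-3/2}$. Therefore $[\nna^{s-3/2},\cN_f]=[\nna^{s-3/2},\cN_f-\cN_0]$, and the whole estimate is driven entirely by the $f$-dependent perturbation.

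The key steps I would carry out are: (i) straighten the fluid domain via a diffeomorphism $\Om_*=\bbT^2\times[-1,0]\to\Om_f$ preserving tangential coordinates, so that the transported extension $\varphi=\phi\circ\Phi_f$ satisfies a divergence-form elliptic equation $\mathrm{div}_y(A_f\nabla_y\varphi)=0$ with $A_f-I$ vanishing linearly in $\nabla f$; (ii) split the commutator into a boundary piece in which $\nna^{s-3/2}$ falls on the explicit coefficient $\pa_i f$, handled by Lemma \ref{lem:com-f-ds} and a Kato--Ponce fractional Leibniz estimate to produce bounds of the form $|\pa_i f|_{H^{s-3/2}}|\underline{\nabla\phi}|_{L^\infty}+|\pa_i f|_{L^\infty}|\underline{\nabla\phi}|_{H^{s-3/2}}$, and an interior piece captured by the auxiliary function $w=\nna^{s-3/2}\varphi-\cH_f(\nna^{s-3/2}g)\circ\Phi_f$; (iii) observe that $w$ vanishes on both boundaries of $\Om_*$ and solves $\mathrm{div}_y(A_f\nabla_y w)=\bigl[\nna^{s-3/2},\mathrm{div}_y(A_f\nabla_y\,\cdot\,)\bigr]\varphi$, whose right-hand side carries at least one factor of $\nabla f$ coming from $A_f-I$; standard elliptic regularity in $\Om_*$ then controls $|\underline{\nabla w}|_{H^{1/2}(\bbT^2)}$ by $|f|_{H^{s-1/2}}|g|_{H^{s-3/2}}$, and the trace of $\underline{\nabla w}$ delivers the missing contribution to $[\nna^{s-3/2},\cN_f]g$.

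The main obstacle is obtaining the stated \emph{quadratic} dependence $|f|^2_{H^{s-1/2}}$ rather than a generic $C(|f|_{H^{s-1/2}})$. This sharpness reflects the fact that the principal symbol expansion
\begin{align*}
\sqrt{(1+|\nabla f|^2)|\xi'|^2-(\nabla f\cdot\xi')^2}-|\xi'|=O(|\nabla f|^2)|\xi'|
\end{align*}
vanishes to second order in $\nabla f$, so the linear-in-$\nabla f$ piece of $\cN_f-\cN_0$, after commutation with the constant-coefficient $\nna^{s-3/2}$, either produces a lower-order remainder or cancels against the boundary contribution $-\pa_i f\,\underline{\pa_i\phi}$ modulo smoother pieces. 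Verifying this cancellation rigorously at the fractional order $s-3/2$ is the technical heart of the argument and is where paradifferential bookkeeping (or a careful direct expansion of $A_f$ in powers of $\nabla f$) is needed. Once the cancellation is in place, Moser's inequality on $\bbT^2$ together with $s-3/2>1$ converts the resulting bilinear-in-$\nabla f$ remainder into
\begin{align*}
|\nabla f|^2_{H^{s-3/2}}|g|_{H^{s-3/2}}\lesssim|f|^2_{H^{s-1/2}}|g|_{H^{s-3/2}},
\end{align*}
and the remaining lower-order contributions are absorbed using Lemma \ref{lem:com-f-DN}, Lemma \ref{lem:DN-b} and Sobolev embeddings; the estimate then extends to all $g\in H^{s-3/2}(\bbT^2)$ by density.
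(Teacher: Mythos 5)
The paper offers no proof of this lemma at all---it is quoted directly from Lannes \cite{La}---so there is no in-house argument to measure your outline against; I can only judge it on its own terms. The skeleton is right (straightening the domain, introducing the auxiliary function $w=\nna^{s-3/2}\varphi-\cH_f(\nna^{s-3/2}g)\circ\Phi_f$, and estimating its boundary trace): this is essentially how such commutator estimates are obtained in \cite{La} and in the commutator lemmas proved in the Appendix of this paper. But the proposal is not a proof, because the step you yourself call ``the technical heart''---the cancellation that is supposed to produce the quadratic factor $|f|^2_{H^{s-1/2}}$---is announced rather than carried out, and the mechanism you propose for it does not suffice. The identity $\sqrt{(1+|\nabla f|^2)|\xi'|^2-(\nabla f\cdot\xi')^2}-|\xi'|=O(|\nabla f|^2)|\xi'|$ concerns only the \emph{principal} symbol of $\cN_f$. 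The order-zero part of $\cN_f$ (equivalently, the part of the $A_f-I$ contribution not absorbed by the principal cancellation) is genuinely \emph{linear} in $f$ at leading order, and its commutator with $\nna^{s-3/2}$ is an operator of order $s-5/2$ whose $L^2\to L^2$ bound on $H^{s-3/2}$ data is linear, not quadratic, in $|f|_{H^{s-1/2}}$. So the route you describe cannot yield a purely quadratic right-hand side; what it yields, and what \cite{La} actually proves, is a bound with a constant of the form $C(|f|_{H^{s-1/2}})$, which is all that the energy estimates of Section 4 require. Chasing an exactly homogeneous $|f|^2$ is both unnecessary for the application and, on the evidence of your argument, unattainable.

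There is a second, independent gap in step (iii). The source term $\div\big([\nna^{s-3/2},A_f]\nabla_y\varphi\big)$ lies only in $H^{-1}(\Om_*)$ when $[\nna^{s-3/2},A_f]\nabla_y\varphi\in L^2(\Om_*)$, so standard elliptic regularity gives $w\in H^1(\Om_*)$ only; this controls neither $|\uline{\nabla w}|_{L^2(\bbT^2)}$ nor, as you claim, $|\uline{\nabla w}|_{H^{1/2}(\bbT^2)}$. To recover the conormal trace in $L^2$ of the boundary one must instead use the variational characterization of $\vN_f\cdot\uline{\nabla w}$, pairing against harmonic extensions $\cH_f\phi$ of test functions and estimating the resulting volume integrals by duality---exactly the device used in the proofs of Lemma \ref{lem-com-f-DN} and Lemma \ref{lem-com-t-DN-2}. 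With that modification, and with the target weakened to $C(|f|_{H^{s-1/2}})|g|_{H^{s-3/2}}$, your outline can be completed; as written, both the claimed cancellation and the trace estimate are genuine holes.
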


\subsection{Stability condition of the $\e$-regularized system}
This subsection is devoted to showing that the mixed type stability condition is valid for the initial data of the $\e$-regularized system and it can be preserved in a uniform time.
\begin{lemma}\label{lem:tay-con}
	Giving $(f_0,\vu_0,\vF_0)$ satisfies C1 and C2, we assume that $(f,\vu,\vF)$ is the solution of system \eqref{eq:els}-\eqref{bc:els}, \eqref{bc:e-elsp} with initial data $(f^\e_0,\vu^\e_0,\vF^\e_0)$ defined in \eqref{ini:e-f}-\eqref{ini:e-els}, and satisfies
	\begin{align}\label{eq:e-energy-control}
		\e|f|^2_{H^{s+1/2}}+|f|^2_{H^{s-1/2}}+||\vu||^2_{H^s(\Omega_{f})}+||\vF||^2_{H^s(\Omega_{f})}\le CM^s_0
	\end{align}
	on $[0,T]$. Then, there exist constants $\tilde\e$ and $\widetilde T\le T$ such that for each $\e\in(0,\tilde\e]$ and $t\in[0,\widetilde T]$,
  \begin{itemize}
    \item[1.] $-\uline{\pa_3\nabla p}(t,x')\ge \frac{1}{2}c_0$ on $\Gamma^1$;
    \item[2.] $\Lambda(\vF)(t,x')\ge \frac{1}{2}c_0$ on $\Gamma^2$.
  \end{itemize}
\end{lemma}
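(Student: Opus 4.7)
The plan is a continuity-in-time argument built on \eqref{eq:e-energy-control}: at $t=0$ the regularized data satisfy the stability conditions with strength at least $\tfrac{3}{2}c_0$, while the time derivatives $\pa_t\uline{F_{ij}}$ and $\pa_t\uline{\pa_3 p}$ are uniformly bounded in $L^\infty(\bbT^2)$, so that a short-time integration preserves the conditions with strength $\tfrac12 c_0$. First I would verify that for $\e$ sufficiently small
\begin{equation*}
-\uline{\pa_3 p_0^\e}\ge \tfrac{3}{2}c_0\ \text{on}\ \Gamma^1,\qquad \Lambda(\vF_0^\e)\ge \tfrac{3}{2}c_0\ \text{on}\ \Gamma^2.
\end{equation*}
For $\vF_0^\e$ this follows from \eqref{ini:e-els}, the convergence $\vF_0^\e\circ\Phi_{f_0^\e}\to\vF_0$ in $H^s(\Omega_{f_0})$, the embedding $H^{s-1/2}(\bbT^2)\hookrightarrow C^0$ (valid since $s\ge 4$), and the Lipschitz dependence of $\Lambda$ on $\uline{\vF}$ in the uniform norm. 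For $p_0^\e$, the same convergence combined with elliptic regularity for the initial pressure problem suffices once one notes that the boundary datum $-\e\bar{\mathcal N}_{f_0^\e}^{-1}\Delta' f_0^\e$ is $O(\sqrt{\e})$ in $H^{s-1/2}$ via the bound $\sqrt{\e}\,|f_0^\e|_{H^{s+1/2}}\le C$ from \eqref{eq:ini-energy}.

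The second step is the uniform-in-$\e$ estimate
\begin{equation*}
\|\pa_t\uline{F_{ij}}\|_{L^\infty(\bbT^2)}+\|\pa_t\uline{\pa_3 p}\|_{L^\infty(\bbT^2)}\le C.
\end{equation*}
The bound for $\uline{F_{ij}}$ follows immediately from $D_t F_{ij}=\sum_k F_{kj}\pa_k u_i$ and the embedding $H^s(\Omega_f)\hookrightarrow W^{1,\infty}(\Omega_f)$. For $p$, I would apply elliptic regularity to
\begin{equation*}
\Delta p=-\mathrm{tr}(\na\vu\,\na\vu)+\sum_{j=1}^3\mathrm{tr}(\na\vF_j\,\na\vF_j),\ \ p|_{\Gamma_f}=-\e\bar{\mathcal N}_f^{-1}\Delta' f,\ \ \pa_3 p|_{\Gamma^-}=0,
\end{equation*}
to obtain $\|p\|_{H^s(\Omega_f)}\le C$ (using $\sqrt{\e}\,|f|_{H^{s+1/2}}\le C$ to control the Dirichlet datum in $H^{s-1/2}$). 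Then differentiate this elliptic problem in time, estimate the resulting right-hand side via the dynamic equations for $(\vu,\vF,f)$, and apply Lemma~\ref{lem:com-t-DN} to handle $\pa_t(\bar{\mathcal N}_f^{-1}\Delta' f)$; this delivers $\|\pa_t p\|_{H^{s-1}(\Omega_f)}\le C$ and, via trace and Sobolev embedding ($s-5/2>1$ in two dimensions), the claimed $L^\infty$ bound on $\pa_t\uline{\pa_3 p}$.

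Finally, integrating in time over $[0,\widetilde T]$ gives
\begin{equation*}
|\uline{F_{ij}}(t,\cdot)-\uline{F_{0,ij}^\e}|_{L^\infty}+|\uline{\pa_3 p}(t,\cdot)-\uline{\pa_3 p_0^\e}|_{L^\infty}\le C\widetilde T.
\end{equation*}
Since $\Lambda$ is the squared smallest singular value of the $3\times 2$ matrix $(\uline{F_{ij}})_{i=1,2;\,j=1,2,3}$ and is therefore Lipschitz in $\uline{\vF}$, the first bound transfers directly to $\Lambda(\vF)$; choosing $\widetilde T$ small enough (independent of $\e$) then yields both stability conditions with strength $\tfrac12 c_0$. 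The main obstacle will be the uniform-in-$\e$ estimate for $\pa_t\uline{\pa_3 p}$: the pressure is only implicitly determined by $(\vu,\vF,f)$, and time-differentiating the moving-boundary Dirichlet condition produces $\pa_t(\bar{\mathcal N}_f^{-1}\Delta' f)$, which requires careful treatment of the Dirichlet--Neumann inverse on an evolving surface. In particular, the $\e$ prefactor in the boundary data must absorb the fact that $f$ is only bounded in $H^{s-1/2}$, not in $H^{s+1/2}$, uniformly in $\e$; this is arranged through the factorization $\e=\sqrt{\e}\cdot\sqrt{\e}$ together with the commutator estimate in Lemma~\ref{lem:com-t-DN}.
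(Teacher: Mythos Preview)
Your proposal is correct and follows essentially the same strategy as the paper: verify the strengthened stability conditions at $t=0$ for small $\e$ via convergence of the mollified data and smallness of the $\e$-boundary contribution to the pressure, then obtain a uniform-in-$\e$ $L^\infty$ bound on the time derivative of $\uline{\pa_3 p}$ by deriving and solving an elliptic problem for the time-differentiated pressure (using Lemma~\ref{lem:com-t-DN} for the boundary datum $-\e D_t\bar{\mathcal N}_f^{-1}\Delta' f$), and integrate in time. The only notable difference is that the paper works with the material derivative $D_t$ rather than $\pa_t$, which is better adapted to the moving domain (one has $\uline{D_t v}=D_t\uline{v}$, so no extra chain-rule terms appear when passing to the trace), and obtains the interior equation for $D_t p$ by applying $D_t$ to the momentum equation and then taking the divergence.
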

\begin{proof}
For the $\e$-regularized system, the pressure can be written as
\begin{align*}
	p=\bar p+\mathring p,
\end{align*}
where
\begin{align*}
	\bar p\eqdefa-\varepsilon\bar{\mathcal H}_{f}\bar{\mathcal N}_{f}^{-1}\Delta' {f},\qquad \mathring p\eqdefa p_{\vu,\vu}-\sum^3_{j=1}p_{\vF_j,\vF_j}.
\end{align*}
Recalling that $p_0=p_{\vu_0,\vu_0}-\sum^3_{j=1}p_{\vF_{0j},\vF_{0j}}$ satisfies the Taylor sign condition on $\Gamma_{f_0}$ with $x'\in\Gamma^1$, we choose $\e$ small enough such that
\begin{align*}
	-\uline{\pa_3(p_{\vu^\e_0,\vu^\e_0}-\sum^3_{j=1}p_{\vF^\e_{0j},\vF^\e_{0j}})}>\frac{3}{2}c_0\ \text{on}\ \Gamma^1,\quad \Lambda(\vF^\e_0)>\frac{3}{2}c_0\ \text{on}\ \Gamma^2.
\end{align*}
From Lemma \ref{lem:DN-1}, we know that
\begin{align*}
	|\uline{\pa_3\bar p}(0,\cdot)|_{L^\infty}\le ||\bar p(0,\cdot)||_{H^4(\Om_{f^2_0})}\le C\e |f^\e_0|_{H^{7/2}}|f^\e_0|_{H^{9/2}}\le \sqrt{\e}CM^s_{\e}.
\end{align*}
Taking $\e$ small enough, we have
\begin{align*}
	-\uline{\pa_3p}(0,x')>c_0 \quad \text{on}\ \Gamma^1.
\end{align*}

Similar to \cite{LWZ}, it is direct to show that the non-collinear condition $\Lambda(\vF)>\frac{1}{2}c_0$ will hold in a short time independent of $\e$. So, we only need to focus on the Taylor sign condition.

Firstly, we give an estimate of $\pa_t p$. Applying $D_t$ to the first equation of (\ref{eq:els}), with the help of the third equation of (\ref{eq:els}) we have
\begin{align*}
	D_t^2u_i+D_t\pa_i p&=\sum^3_{j=1}\big(D_t\vF_j\cdot\nabla F_{ij}+\vF_j\cdot \nabla D_t F_{ij}-(\vF_j\cdot\nabla\vu)\cdot\nabla F_{ij}\big)\\
&=\sum^3_{j=1}\vF_j\cdot \nabla (\vF_j\cdot\nabla u_i).
\end{align*}
Taking divergence on both side of the above equation, we have
\begin{align}\label{eq:p-evo}
	\partial_i^2D_tp=&\sum^3_{s=1}((\partial_i^2u_s)\pa_sp+(\partial_iu_s) \partial_i\pa_sp)\\
  \nonumber&-\sum^3_{k=1}\Big((\pa_iD_tu_k)\pa_ku_i+2(\pa_iu_k)\pa_k(D_tu_i)\Big)+\sum^3_{s,k=1}2(\pa_iu_s)(\pa_su_k)(\pa_ku_i)\\
	\nonumber&+\sum^3_{s,j,k=1}\Big((\partial_iF_{kj})(\pa_kF_{sj})\pa_su_i+F_{kj}(\pa_i\pa_kF_{sj})\pa_su_i+2(\pa_iF_{kj})F_{sj}\pa_k\pa_su_i\Big).
\end{align}
Here we used the result that $\div \vu=0$.

Recalling that
\begin{align*}
	\pa_3p=0\quad\text{on}\ \Gamma^-,\quad p=-\varepsilon \bar{\mathcal N}_f^{-1}\Delta' f \qquad \text{on}\ \Gamma_f,
\end{align*}
the equation \eqref{eq:p-evo} is  equipped with the following boundary conditions:
\begin{align*}
	D_tp&=-\varepsilon D_t\bar{\mathcal N}_f^{-1}\Delta' f=-\varepsilon\bar{\mathcal N}_f^{-1}D_t\Delta' f-\varepsilon\bar{\mathcal N}_f^{-1}[\bar{\mathcal N}_f,D_t]\bar{\mathcal N}_f^{-1}\Delta' f\quad&\text{on}\ \Gamma_f,\\
	\pa_3D_tp&=\pa_3u_1\pa_1p+\pa_3u_2\pa_2p\quad&\text{on}\ \Gamma^-.
\end{align*}
When $s\ge1$, we have the following estimate
\begin{align*}
	||D_tp||_{H^{s+2}(\Omega_f)}&\lesssim ||\Delta D_tp||_{H^{s}(\Omega_f)}+|f|_{H^{s+ 3/2}}|\uline{D_tp}|_{H^{s+3/2}(\bbT^2)}+|\pa_3D_tp|_{H^{s+1/2}(\Gamma^-)}.
\end{align*}
Substituting \eqref{eq:p-evo} into it, one can obtain that
\begin{align}\label{ineq:dtp}
&	||D_tp||_{H^{s+2}(\Omega_f)}\\
&\lesssim ||\vu||_{H^{s+3}(\Omega_f)}\big(||p||_{H^{s+3}(\Omega_f)}+||\vF||^2_{H^{s+3}(\Omega_f)}\big)
+||D_t\vu||_{H^{s+2}(\Omega_f)}||\vu||_{H^{s+2}(\Omega_f)}\nonumber\\
	&\quad+||\vu||^3_{H^{s+2}(\Omega_f)}+\varepsilon |f|^2_{H^{s+2}}|D_t\Delta' f|_{H^{s+1/2}}+\varepsilon|f|^2_{H^{s+2}}|[\bar{\mathcal N}_f,D_t]\bar{\mathcal N}^{-1}_f\Delta' f|_{H^{s+ 1/2}}\nonumber\\
	&\lesssim (1+||\vu||_{H^{s+3}(\Omega_f)}+||\vF||_{H^{s+3}(\Omega_f)}+||p||_{H^{s+3}(\Omega_f)}+|f|^4_{H^{s+5/2}}+\e|f|_{H^{s+7/2}})^3,\nonumber
\end{align}
where in the last inequality, we used Lemma \ref{lem:DN-1} and Lemma \ref{lem:com-t-DN}.

Recalling that
\begin{align*}
	p=-\varepsilon\bar{\mathcal H}_f\bar{\mathcal N}_f^{-1}\Delta' f+p_{\vu,\vu}-\sum^3_{j=1}p_{\vF_j,\vF_j},
\end{align*}
and using Lemma \ref{lem:DN-1}, we get
\begin{align}\label{ineq:p}
	||p||_{H^{s}(\Omega_f)}\lesssim||\vu||^2_{H^{s-1}(\Omega_f)}+||\vF||^2_{H^{s-1}(\Omega_f)}+\e |f|^2_{H^{s+1/2}}.
\end{align}
This means that $||D_tp||_{H^{3}(\Omega_f)}$ can be controlled by the initial energy $M^s_{\e}$. Using \eqref{eq:ini-energy},
when $\e\le\hat\e$, we have $|\uline{\pa_3D_tp}|_{L^\infty}\le (1+C{M^s_0}^2)^3$ and immediately
\begin{align*}
	|D_t\uline{\pa_3p}|_{L^\infty}\le \big(1+C{M^s_0}^2\big)^3.
\end{align*}
As a conclusion, it holds for any $x\in\Gamma_f$ that
\begin{align*}
	\pa_3p(t,x)>\pa_3p(0,x)-t(1+C{M^s_0}^2)^3.
\end{align*}
Then there exists a constant $\widetilde T$, which is independent of $\e$,  such that $-\uline{\pa_3p}(t,x')>\frac{1}{2}c_0$ for any $x'\in \Gamma_1$ and $t\in[0,\widetilde T]$.
This finishes the proof of Lemma \ref{lem:tay-con}.
\end{proof}
\subsection{$\e$-independent energy estimate}

Now we derive the $\e$-independent energy estimate for the regularized system.

Similar to \eqref{eq:els-evo}, considering that
\begin{align*}
  -\partial_i'\uline{\partial_3\bar p}+(\partial_i'\uline{\partial_j\bar p})\partial_j'f=&-\vN_f\cdot \partial_i'(\uline{\nabla\bar p})\\
  =&-\partial_i'(\vN_f\cdot\uline{\nabla\bar p})+\partial_i'(\vN_f)\cdot\uline{\nabla\bar p}\\
  =&\varepsilon\partial_i'\Delta' f- \partial_1'\partial_i'f \uline{\partial_1\bar p}-\partial_2'\partial_i'f \uline{\partial_2\bar p},
\end{align*}
we can derived for the $\e$-regularized system that
\begin{align}\label{eq:fe-evo}
	D_t^2\partial_i' f=&\uline{\partial_3p}\vN_f\cdot\uline{\nabla\cH_f(\partial_i'f)}+\sum^3_{k=1}D^2_{F_k}\partial_i'f+\varepsilon\Delta' \partial_i'f+\sum^3_{k=1}2(\partial_i'\uline{F_{sk}})D_{F_k}\partial_s'f\\
	&-2(\pa_i'\uline{u_j})D_t\pa_j'f-\vN_f\cdot\uline{\nabla q_i}- \partial_1'\partial_i'f \uline{\partial_1\bar p}-\partial_2'\partial_i'f \uline{\partial_2\bar p},\nonumber
\end{align}
where $q_i=\partial_i\mathring p+\partial_3\mathring p\cH_f(\partial_i'f)$. Motivated by the above equation, we introduce
\begin{align*}
	E_\varepsilon^s(t)=&|D_t\langle \nabla'\rangle^{s-3/2}\partial_i'f|^2_{L^2}+\sum^3_{k=1}|D_{F_k} \langle \nabla'\rangle^{s-3/2}\partial_i'f|^2_{L^2}+\e|\partial_i'f|^2_{H^{s-1/2}}\\
	&+\int_{\Omega_f}\tilde{\mathfrak a}\big(\nabla\cH_f(\nna^{s-3/2}\partial_i'f)\big)^2dx+|f|^2_{L^2}+|\pa_tf|^2_{L^2}+||\vu||^2_{H^{s}(\Omega_f)}+||\vF||^2_{H^{s}(\Omega_f)},
\end{align*}
where $\tilde{\mathfrak a}$ is a suitably chosen function satisfying
\begin{equation*}
  \left\{
  	\begin{array}{ll}
  		0<c_0\le\tilde{\mathfrak a}\le C&x\in\Omega_f,\\
  		\uline{\tilde{\mathfrak a}}=-\uline{\pa_3p}&x'\in\Gamma^1.
  	\end{array}
  \right.
\end{equation*}
Therefore we have
\begin{align*}
	C||\partial_i'f||^2_{\dot H^{1/2}}\le\int_{\Omega_f}\tilde{\mathfrak a}\big(\nabla\cH_f(\partial_i'f)\big)^2dx\le C||\partial_i'f||^2_{\dot H^{1/2}}.
\end{align*}

The function $\tilde{\mathfrak a}$ is constructed in the following way. Define $\mathfrak a=-\uline{\pa_3p}$.
Recalling the proof of Lemma \ref{lem:tay-con}, \eqref{ineq:dtp} and \eqref{ineq:p} shows that for the solutions constructed in Proposition \ref{pro:reg-wp}, $\uline{-\pa_3p}$ have a uniform bound when $(t,x')\in([0,\tilde T],\bbT^2)$. Therefore, we can choose constant $\tilde c=\tilde c(M^s_0)$ such that $\tilde c-\uline{\pa_3p}\ge c_0$ for all $x'\in\bbT^2$. Then we let $\bar {\mathfrak a}= \mathfrak a+\phi \tilde c $, where $\phi \in C^\infty(\bbT^2)$ is a cutoff function satisfying
\begin{equation}
  \left\{
  	\begin{array}{ll}
  		0\le\phi\le1&x'\in \bbT^2,\\
	  	\phi=0 & x'\in \bbT^2/\Gamma^2,\\
	  	\phi=1 & x'\in \bbT^2/\Gamma^1.
  	\end{array}
  \right.
\end{equation}
If the Taylor sign condition holds on $\Gamma^1$, it follows that $\bar {\mathfrak a}\ge c_0 $ for $(t,x')\in [0,\tilde T]\times \bbT^2$. We choose $\tilde{\mathfrak a}$ as the solution of following equation:
\begin{equation}
  \left\{
  	\begin{array}{ll}
  		\Delta \tilde{\mathfrak a}=0&x\in\Omega_f,\\
  		\tilde{\mathfrak a}=\bar {\mathfrak a}&x\in\Ga_f,\\
  		\tilde{\mathfrak a}=c_0&x\in\Ga^-.
  	\end{array}
  \right.
\end{equation}
The maximum principle yields that
\begin{align*}
	c_0\le\tilde{\mathfrak a}\le\tilde c \qquad \text{on}\ \Omega_f.
\end{align*}
Thus, the constructed $\tilde{\mathfrak a}$ meets our requirement.

The uniform a priori estimate is stated as follows:
\begin{proposition}\label{pro:e-energy}
	Giving $(f_0,\vu_0,\vF_0)$ satisfies C1 and C2, we assume  that $(f,\vu,\vF)$ is the solution of system \eqref{eq:els}-\eqref{bc:els}, \eqref{bc:e-elsp} with initial data $(f^\e_0,\vu^\e_0,\vF^\e_0)$ defined in \eqref{ini:e-f}-\eqref{ini:e-els}, and satisfying stability condition \eqref{condition:s2} on $[0,T]$, then it holds that
	\begin{align*}
		\sup_{t\in[0,T]}E_\e^s(t)\le CE_\e^s(0)e^{CT},
	\end{align*}
where $C$ is a constant independent of $\e$.
\end{proposition}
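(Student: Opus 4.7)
The strategy is a high-order energy estimate obtained by testing a commuted version of the interface evolution equation \eqref{eq:fe-evo} against $D_t g$, with $g:=\nna^{s-3/2}\pa_i'f$, and combining it with a standard $H^s$ estimate for the interior system \eqref{eq:els}. The functional $E_\varepsilon^s$ is designed precisely so that the three leading stabilizing operators in \eqref{eq:fe-evo}---the Taylor term $\uline{\pa_3 p}\,\vN_f\cdot\uline{\nabla\cH_f(\cdot)} = -\mathfrak{a}\,\cN_f(\cdot)$, the elasticity term $\sum_k D_{F_k}^2$, and the regularization $\varepsilon\Delta'$---produce, after pairing with $D_t g$, the time derivatives of $\int_{\Omega_f}\aat|\nabla\cH_f g|^2\,dx$, $\sum_k |D_{F_k}g|_{L^2}^2$, and $\varepsilon|\nabla' g|_{L^2}^2$, respectively. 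My first step is to apply $\nna^{s-3/2}$ to \eqref{eq:fe-evo} and rearrange it into the master equation
\begin{equation*}
D_t^2 g + \aat\,\cN_f g - \sum_{k=1}^3 D_{F_k}^2 g - \varepsilon\Delta' g = R,
\end{equation*}
where $R$ collects all commutators of $\nna^{s-3/2}$ with $D_t$, $\cN_f$, $D_{F_k}$ and multiplication operators, together with the genuinely lower-order summands of \eqref{eq:fe-evo}. Estimates on $R$ follow from Lemma \ref{lem:com-f-ds}, Corollary \ref{cor:com-dt-ds}, and Lemmas \ref{lem:com-f-DN}, \ref{lem:com-t-DN}, \ref{lem:com-ds-DN}, while the $\bar p$-contributions are controlled via Lemma \ref{lem:DN-1} using the $\varepsilon|f|_{H^{s-1/2}}^2$ slot of $E_\varepsilon^s$; the replacement of $\mathfrak a$ by $\aat$ only produces an error supported away from $\Gamma^1$ that is absorbed by positivity of $\cN_f$ (Lemma \ref{lem:DN}) and the cutoff construction of Section 4.1.

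Multiplying the master equation by $D_t g$ and integrating over $\bbT^2$ yields the interface energy identity. The first term gives $\tfrac12 \tfrac{d}{dt}|D_t g|_{L^2}^2$ modulo an $|\nabla'\uline{\vu}|_{L^\infty}|D_t g|_{L^2}^2$ remainder; the elasticity term, together with the identity $[D_t,D_{F_k}]=0$ derived at the opening of Section 3 and integration by parts on the closed surface $\bbT^2$, gives $-\tfrac12\tfrac{d}{dt}\sum_k |D_{F_k}g|_{L^2}^2$ plus a remainder involving $\sum_{j=1}^2 \pa_j'\uline{F_{jk}}$ which by $\div\vF_k=0$ and $\vF_k\cdot\vN_f=0$ reduces to an expression bounded by $\|\vF\|_{H^s(\Omega_f)}$; and the regularization yields $-\tfrac{\varepsilon}{2}\tfrac{d}{dt}|\nabla' g|_{L^2}^2$ plus $\varepsilon$-remainders. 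The delicate piece is the Taylor term, reshaped as
\begin{equation*}
\int_{\bbT^2}\aat\,(\cN_f g)(D_t g)\,dx' = \tfrac12\tfrac{d}{dt}\int_{\Omega_f}\aat|\nabla\cH_f g|^2\,dx + \text{l.o.t.},
\end{equation*}
where the reconstruction relies on self-adjointness and positivity of $\cN_f$ (Lemma \ref{lem:DN}), the commutators $[\cN_f,\aat]$ and $[\cN_f,D_t]$ bounded through Lemmas \ref{lem:com-t-DN}--\ref{lem:com-t-DN-2} and \ref{lem:DN-b}, and an $L^\infty$ bound on $\pa_t\aat$ extracted from the pressure evolution equation \eqref{eq:p-evo} together with \eqref{ineq:dtp}--\eqref{ineq:p}, exactly as in the proof of Lemma \ref{lem:tay-con}.

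The interior piece $\|\vu\|_{H^s(\Omega_f)}^2 + \|\vF\|_{H^s(\Omega_f)}^2$ is estimated by a standard $H^s$ energy argument applied to \eqref{eq:els}: applying $\pa^\alpha$ for $|\alpha|\le s$, pairing with $\pa^\alpha\vu$ and $\pa^\alpha\vF_j$, and exploiting the symmetric cancellation between $\sum_j\vF_j\cdot\nabla\vF_j$ in the momentum equation and $\vF_j\cdot\nabla\vu$ in the transport equation. The pressure is controlled via \eqref{ineq:p} and the modified boundary condition \eqref{bc:e-elsp} through Lemma \ref{lem:DN-1}, so the $\varepsilon$-dependence is absorbed into the $\varepsilon|f|_{H^{s-1/2}}^2$ slot of the energy. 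Assembling the interface and interior identities gives $\tfrac{d}{dt}E_\varepsilon^s \le C\,E_\varepsilon^s$ with $C$ independent of $\varepsilon\in(0,\bar\varepsilon]$, and Gr\"onwall closes the estimate. The \textbf{main obstacle} is the careful accounting of all boundary commutators in the reshaping of the Taylor term: one must simultaneously track the motion of $\Gamma_f$, the time derivative of $\aat$, and the top-order commutator of $\nna^{s-3/2}$ with $\cN_f$, and show that every $\varepsilon$-dependent contribution arising from $\bar p = -\varepsilon\bar{\cN}_f^{-1}\Delta' f$ can be absorbed by the $\varepsilon|f|_{H^{s-1/2}}^2$ piece of $E_\varepsilon^s$, so that no negative power of $\varepsilon$ enters the Gr\"onwall constant.
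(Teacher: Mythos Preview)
Your treatment of the interface energy is essentially the paper's: test the commuted version of \eqref{eq:fe-evo} against $D_t g$ and identify the three coercive pieces. One imprecision: the error from replacing $\aaa$ by $\aab$ (and then $\aat$) is not ``absorbed by positivity of $\cN_f$''. The difference $\aab-\aaa=\phi\tilde c$ is supported in $\Gamma^2$, and when its time derivative produces the top-order term $\cN_f\big((\aab-\aaa)\nna^{s-3/2}\pa_i'f\big)$, the required $H^1$ control of $(\aab-\aaa)\nna^{s-3/2}\pa_i'f$ comes from the \emph{non-collinearity} condition on $\Gamma^2$, which makes $\sum_k|D_{F_k}g|_{L^2}$ dominate $|\phi\,\nabla'g|_{L^2}$. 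This is precisely the step where the two stability mechanisms interact, and it should be stated as such.

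The genuine gap is the interior estimate. A direct $H^s$ energy argument on \eqref{eq:els}---differentiating $|\alpha|=s$ times and pairing---forces control of $\|\nabla p\|_{H^s}=\|p\|_{H^{s+1}}$. Running \eqref{ineq:p} at level $s+1$ yields $\varepsilon|f|_{H^{s+3/2}}^2$, which is \emph{not} in $E_\varepsilon^s$; the energy only carries $\varepsilon|\pa_i'f|_{H^{s-1/2}}^2\sim\varepsilon|f|_{H^{s+1/2}}^2$. Integrating the pressure term by parts does not help either: the resulting boundary integral contains $\underline{\pa^\alpha p}$, and since $\underline p=-\varepsilon\bar{\cN}_f^{-1}\Delta'f$ you are again one half-derivative short. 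The paper avoids this entirely by never propagating $\|\vu\|_{H^s},\|\vF\|_{H^s}$ at top order. Instead it invokes the div--curl estimate of Lemma~\ref{lem:ell-est}: $\|\vu\|_{H^s}$ is recovered \emph{statically} from $\|\curl\vu\|_{H^{s-1}}$ (which obeys a pressure-free transport equation), $\div\vu=0$, and the boundary datum $\pa_i'\underline{\vu}\cdot\vN_f=D_t\pa_i'f$, which is already the first slot of the interface energy. Likewise $\|\vF_k\|_{H^s}$ is recovered from $\|\curl\vF_k\|_{H^{s-1}}$ and $\pa_i'\underline{\vF_k}\cdot\vN_f=D_{F_k}\pa_i'f$. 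Only the $H^{s-1}$ norms are estimated by a direct energy argument, and there \eqref{ineq:p} at level $s$ is enough. You should replace your interior step by this div--curl route; otherwise the Gr\"onwall constant will contain a negative power of~$\varepsilon$.
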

We first introduce an elliptic estimate for a vector field.
\begin{lemma}\label{lem:ell-est} For any vector field $\vv$ defined on $\Omega_f$, we have
\begin{align*}
	||\vv||_{H^{s}(\Omega_f)}\le& C(|f|_{H^{s-1/2}})\Big(||\nabla\times\vv||_{H^{s-1}(\Omega_f)}+||\div\vv||_{H^{s-1}(\Omega_f)}\\
	&+\sum_{i=1,2}|\partial_i'\vv\cdot\vN_f|_{H^{s-3/2}}+||\vv||_{H^{s-1}(\Omega_f)}\Big).
\end{align*}
\end{lemma}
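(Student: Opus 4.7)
The plan is to combine the pointwise reduction from normal to tangential derivatives (via the div-curl structure) with an induction on $s$ to control the purely tangential derivatives on $\Omega_f$.

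\textbf{Step 1: Reduction to tangential derivatives.} From
\begin{align*}
  \partial_3 v_3 &= \div\vv-\partial_1 v_1-\partial_2 v_2,\\
  \partial_3 v_1 &= \partial_1 v_3-(\nabla\times\vv)_2,\\
  \partial_3 v_2 &= \partial_2 v_3+(\nabla\times\vv)_1,
\end{align*}
any mixed derivative $\partial_3^{k}\partial_1^{a}\partial_2^{b}\vv$ with $k\ge 1$ and $k+a+b\le s$ can be rewritten as a sum of tangential derivatives of $\vv$ of the same total order but with strictly smaller $k$, plus derivatives of $\div\vv$ and $\nabla\times\vv$ of order $\le s-1$. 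Iterating, it suffices to control $\sum_{\alpha_1+\alpha_2=s}\|\partial_1^{\alpha_1}\partial_2^{\alpha_2}\vv\|_{L^2(\Omega_f)}$ by the right-hand side of the lemma.

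\textbf{Step 2: Tangential estimate by induction on $s$.} The base case $s=1$ follows from the Hodge identity
\begin{align*}
  \int_{\Omega_f}|\nabla\vv|^2\,dx=\int_{\Omega_f}(|\div\vv|^2+|\nabla\times\vv|^2)\,dx+\text{boundary terms},
\end{align*}
where the boundary contributions split into pieces on $\Gamma_f$ and $\Gamma^-$. A key observation is that on the flat bottom $\Gamma^-$, whose outward normal is $-\ve_3$, the integrand arising when integrating pure tangential derivatives by parts involves $\vn\cdot\ve_i$ for $i=1,2$, which vanishes identically. On $\Gamma_f$, the boundary term decomposes into $\sum_i|\partial_i'\vv\cdot\vN_f|_{H^{-1/2}(\bbT^2)}$ plus commutator terms involving $\partial_i'\vN_f$ paired with $\vv$, which are absorbed into $C(|f|_{H^{1/2}})\|\vv\|_{L^2}$.

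For the inductive step, applying $\partial_i'$ (for $i=1,2$) to $\vv$ yields
\begin{align*}
  \div(\partial_i'\vv)&=\partial_i'\div\vv,\quad \nabla\times(\partial_i'\vv)=\partial_i'(\nabla\times\vv),\\
  (\partial_i'\vv)\cdot\vN_f&=\partial_i'(\vv\cdot\vN_f)-\vv\cdot\partial_i'\vN_f\quad\text{on }\Gamma_f.
\end{align*}
Invoking the inductive hypothesis at order $s-1$ for $\partial_i'\vv$ gives control of $\|\partial_i'\vv\|_{H^{s-1}(\Omega_f)}$, and hence of all purely tangential derivatives of $\vv$ of order $s$. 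The boundary term $\sum_j|\partial_j'(\partial_i'\vv)\cdot\vN_f|_{H^{s-5/2}}$ produced by the induction is converted, using the commutator identity
\begin{align*}
  \partial_j'((\partial_i'\vv)\cdot\vN_f)=\partial_i'((\partial_j'\vv)\cdot\vN_f)+(\partial_j'\vv)\cdot\partial_i'\vN_f-(\partial_i'\vv)\cdot\partial_j'\vN_f,
\end{align*}
into the desired $\sum_j|\partial_j'\vv\cdot\vN_f|_{H^{s-3/2}}$ plus remainders bounded by $C(|f|_{H^{s-1/2}})\|\vv\|_{H^{s-1}(\Omega_f)}$.

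\textbf{Main obstacle.} The main technical difficulty is tracking the commutators between the tangential derivatives $\partial_i'$ and the normal $\vN_f$: each factor $\partial_i'\vN_f$ involves two derivatives of $f$, so for the remainders to fit into $C(|f|_{H^{s-1/2}})\|\vv\|_{H^{s-1}(\Omega_f)}$ one needs Kato--Ponce-type product estimates on $\bbT^2$ combined with the trace theorem, with careful bookkeeping at the endpoint $s-3/2$. An alternative is to pull back to the fixed reference domain $\Omega_*$ via the harmonic coordinate $\Phi_f$ (Lemma \ref{lem:basic}) and apply standard elliptic estimates on $\Omega_*$, shifting the complication from the moving domain onto the coefficients, but the commutator combinatorics remain essentially the same.
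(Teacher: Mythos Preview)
Your proposal takes a genuinely different route from the paper, and as written it contains a real gap in the inductive step.

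\textbf{The gap.} In Step 2 you apply the inductive hypothesis to ``$\partial_i'\vv$'', but $\partial_i'$ is a boundary operator while the lemma needs an interior vector field. The only sensible candidate is $\partial_i\vv$ (horizontal Cartesian derivative in $\Omega_f$), and the boundary term required by the hypothesis is then $|(\partial_j'\underline{\partial_i\vv})\cdot\vN_f|_{H^{s-5/2}}$. Now $\underline{\partial_i\vv}=\partial_i'\underline{\vv}-\partial_i'f\,\underline{\partial_3\vv}$, so
\[
(\partial_j'\underline{\partial_i\vv})\cdot\vN_f=(\partial_j'\partial_i'\underline{\vv})\cdot\vN_f-\partial_j'\big(\partial_i'f\,\underline{\partial_3\vv}\big)\cdot\vN_f.
\]
Your commutator identity handles only the first piece. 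The second piece contains $\partial_i'f\,(\partial_j'\underline{\partial_3\vv})\cdot\vN_f$, which forces $\underline{\partial_3\vv}$ into $H^{s-3/2}$; via the trace theorem this is $\|\vv\|_{H^s}$, exactly the quantity being estimated. The algebraic div--curl relations of Step 1 do not rescue you here: solving for $\underline{\partial_3\vv}$ from $\underline{\partial_1\vv},\underline{\partial_2\vv}$ and div/curl again feeds back through $\underline{\partial_i\vv}=\partial_i'\underline{\vv}-\partial_i'f\,\underline{\partial_3\vv}$ and you end up needing $|\underline{\vv}|_{H^{s-1/2}}$. So the induction does not close without an additional idea, and your ``Main obstacle'' paragraph flags the right spot but does not resolve it.

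\textbf{What the paper does instead.} The paper avoids induction entirely. It invokes elliptic regularity for $\Delta\vv=-\nabla\times(\nabla\times\vv)+\nabla\div\vv$, which reduces matters to controlling the Neumann trace $|\vN_f\cdot\underline{\nabla\vv}|_{H^{s-3/2}}$. Writing $\vw=(\underline{\partial_1\vv}\cdot\vN_f,\underline{\partial_2\vv}\cdot\vN_f,\underline{\partial_3\vv}\cdot\vN_f)$, the tangential components $\tau_i\cdot\vw$ are \emph{exactly} the prescribed data $(\partial_i'\underline{\vv})\cdot\vN_f$. For the remaining normal component $\vN_f\cdot\vw=\vN_f\otimes\vN_f:\underline{\nabla\vv}$, the paper uses the frame identity $(1+|\nabla'f|^2)\mathbf{I}=\sum c_{ij}\tau_i\otimes\tau_j+\vN_f\otimes\vN_f$ to trade $(\vN_f\otimes\vN_f):\nabla^2\vv$ against $\Delta\vv$ and tangential second derivatives, and then recognises the resulting boundary expression as the Dirichlet--Neumann operator acting on $\vN_f\cdot\vw$ (up to controlled remainders). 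Inverting $\cN_f$ recovers the missing half-derivative on $\vN_f\cdot\vw$. This D--N trick is precisely the device that replaces the circularity you encountered.
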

\begin{proof} The proof can be found in \cite{SZ2}. We represent it here for completeness. From the fact that
	\begin{align*}
		\Delta \vv=-\nabla\times(\nabla\times\vv)+\nabla(\div\vv),
	\end{align*}
	it suffices to prove (let $L_0=|f|_{H^{s-1/2}}$)
	\begin{align}\label{eq:elp-bd-est}
		|\vN_f\cdot\uline{\nabla\vv}|_{H^{s-3/2}}\le C(L_0)\big(||\nabla\times\vv||_{H^{s-1}(\Omega_f)}+||\div\vv||_{H^{s-1}(\Omega_f)}+\sum_{i=1,2}|\partial_i'\uline{\vv}\cdot\vN_f|_{H^{s-3/2}}\big).
	\end{align}
Define
	\begin{align*}
		\vw(\vv)=(\uline{\partial_1\vv}\cdot\vN_f, \uline{\partial_2\vv}\cdot\vN_f,\uline{\partial_3\vv}\cdot\vN_f).
	\end{align*}
Then \eqref{eq:elp-bd-est} is equivalent to
	\begin{align}\label{eq:esti-w}
		|\vw|_{H^{s-3/2}}\le C(L_0)\big(||\nabla\times\vv||_{H^{s-1}(\Omega_f)}+||\div\vv||_{H^{s-1}(\Omega_f)}+\sum_{i=1,2}|\partial_i'\uline{\vv}\cdot\vN_f|_{H^{s-3/2}}\big),
	\end{align}
	since  one has
	\begin{align*}
		|\vN_f\cdot\uline{\nabla\vv}-\vw|_{H^{s-3/2}}=|\vN_f\times(\uline{\nabla\times\vv})|_{H^{s-3/2}}\le C(L_0)(||\nabla\times\vv||_{H^{s-1}(\Omega_f)}).
	\end{align*}
	Direct calculations give that
	\begin{align}\label{eq:tangent}
		\tau_i\cdot\vw=\uline{\partial_i\vv}\cdot\vN_f+\uline{\partial_3\vv}\cdot\vN_f\partial_i'f=\partial_i'\uline{\vv}\cdot\vN_f,
	\end{align}
	where $\tau_1=(1,0,\partial_1f)$ and $\tau_2=(0,1,\partial_2f)$.

	To estimate $\vN_f\cdot\vw$, we write
	\begin{align}\label{eq:identity}
		(1+|\nabla' f|^2)\mathbf{I}=&(1+(\partial_2'f)^2)\tau_1\otimes\tau_1-\partial_1'f\partial_2'f(\tau_1\otimes\tau_2+\mathbf{\tau}_2\otimes\tau_1)\\
		&+(1+(\partial_1'f)^2)\tau_2\otimes\tau_2+\vN_f\otimes\vN_f.\nonumber		
	\end{align}
Since
	\begin{align*}
		\partial_i'\partial_j'\uline{\vv}=\partial_i'(\uline{\partial_j\vv}+\uline{\partial_3\vv}\partial_j'f)=\uline{\big(  (\tau_i\otimes\tau_j):\nabla^2\big)\vv}+\uline{\partial_3\vv}\partial_i'\partial_j'f,
	\end{align*}
one has
	\begin{align*}
		\uline{\big((\tau_i\otimes\tau_j):\nabla^2\big)\vv}\cdot\vN_f=&~\partial_i'\partial_j'\uline{\vv}\cdot\vN_f-\uline{\partial_3\vv}\cdot\vN_f\partial_i'\partial_j'f\\
		=&~\partial_i'(\partial_j'\uline{\vv}\cdot\vN_f)-\partial_j'\uline{\vv}\cdot\partial_i'\vN_f-\uline{\partial_3\vv}\cdot\vN_f\partial_i'\partial_j'f.
	\end{align*}
	Therefore, together with (\ref{eq:identity}), one can get
	\begin{align*}
		&\Big|\uline{\Big(\vN_f\otimes\vN_f:\nabla^2\Big)\vv}\cdot\vN_f-\uline{\Big((1+|\nabla' f|^2)\mathbf{I}:\nabla^2\Big)\vv}\cdot\vN_f\Big|_{H^{s-5/2}} \\
		&\le C(L_0)\big(|\partial_j'\uline{\vv}\cdot\vN_f|_{H^{s-3/2}}+||\vv||_{H^{s-1}(\Omega_f)}\big).
	\end{align*}
	On the other hand
	\begin{align*}
		\Big|\uline{\Big((1+|\nabla' f|^2)\mathbf{I}:\nabla^2\Big)\vv}\cdot\vN_f\Big|_{H^{s-5/2}}=&~\big|(1+|\nabla' f|^2)\uline{\Delta\vv}\cdot\vN_f\big|_{H^{s-5/2}}\nonumber\\
		\le&~ C(L_0)\big(||\nabla\times\vv||_{H^{s-1}(\Omega_f)}+||\div\vv||_{H^{s-1}(\Omega_f)}\big),
	\end{align*}
	which implies
	\begin{multline}\nonumber
		\Big|\uline{\Big(\vN_f\otimes\vN_f:\nabla^2\Big)\vv}\cdot\vN_f\Big|_{H^{s-5/2}}\le C(L_0)\Big(|\partial_j'\uline{\vv}\cdot\vN_f|_{H^{s-3/2}}+||\vv||_{H^{s-1}(\Omega_f)}\\
		+||\nabla\times\vv||_{H^{s-1}(\Omega_f)}+||\div\vv||_{H^{s-1}(\Omega_f)}\Big).
	\end{multline}
	We rewrite
	\begin{align}\nonumber
		\uline{\Big(\vN_f\otimes\vN_f:\nabla^2\Big)\vv}\cdot\vN_f=&~\vN_f\cdot\uline{\nabla(\vN_{f\cH}\otimes\vN_{f\cH}:\nabla\vv)}-\uline{(\vN_f\cdot\nabla)(\vN_{f\cH}\otimes\vN_{f\cH})}:\uline{\nabla\vv}\\
		\nonumber=&~\vN_f\cdot\uline{\nabla\cH_f(\vN_f\otimes\vN_f:\nabla\vv)}-\uline{(\vN_f\cdot\nabla)(\vN_{f\cH}\otimes\vN_{f\cH})}:\uline{\nabla\vv}\\
		\nonumber&+\vN_f\cdot\uline{\nabla\big(\vN_{f\cH}\otimes\vN_{f\cH}:\nabla\vv-
				\cH_f(\vN_f\otimes\vN_f:\nabla\vv)\big)},
	\end{align}
	where $\vN_{f\cH}$ denotes the harmonic extension of $\vN_f$, then we have
	\begin{align*}
		\big|\uline{(\vN_f\cdot\nabla)(\vN_{f\cH}\otimes\vN_{f\cH}):\nabla\vv}\big|_{H^{s-5/2}}\le C(L_0)||\vv||_{H^{s-1}(\Omega_f)}.
	\end{align*}
	For $Q=\vN_{f\cH}\otimes\vN_{f\cH}:\nabla\vv-\cH_f(\vN_f\otimes\vN_f:\nabla\vv)$, one has
	\begin{align*}
		Q|_{\Gamma_f}=Q|_{\Gamma_-}=0,\quad ||\Delta Q ||_{H^{s-3}(\Omega_f)}\le C(L_0)\Big(||\nabla\times \vv||_{H^{s-1}(\Omega_f)}+||\div \vv||_{H^{s-1}(\Omega_f)}\Big),
	\end{align*}
	which implies
	\begin{align*}
		|\vN_f\cdot\uline{\nabla\cH_f(\vN_f\otimes\vN_f:\nabla\vv)}|_{H^{s-5/2}}\le &C(L_0)\Big(|\partial_j'\uline{\vv}\cdot\vN_f|_{H^{s-3/2}}+||\vv||_{H^{s-1}(\Omega_f)}\\
		&+||\nabla\times\vv||_{H^{s-1}(\Omega_f)}+||\div\vv||_{H^{s-1}(\Omega_f)}\Big).		
	\end{align*}
	Noticing that $\vN_f\otimes\vN_f:\uline{\nabla\vv}=\vN_f\cdot\vw$, we have
	\begin{align*}
		|\vN_f\cdot\vw|_{H^{s-3/2}}\le &C(L_0)\Big(|\partial_j'\uline{\vv}\cdot\vN_f|_{H^{s-3/2}}+||\vv||_{H^{s-1}(\Omega_f)}\\
		&+||\nabla\times\vv||_{H^{s-1}(\Omega_f)}+||\div\vv||_{H^{s-1}(\Omega_f)}\Big).
	\end{align*}
	Together with (\ref{eq:tangent}), we obtain (\ref{eq:esti-w}), which finishes the proof of this lemma.
\end{proof}
Now, we are in the position to prove Proposition \ref{pro:e-energy}.
\begin{proof}
{\it Estimate of $f$.}
	Using \eqref{eq:fe-evo}, direct calculation shows that
	\begin{align*}
		&\frac{d}{dt}\frac{1}{2}|D_t \nna^{s-3/2}\partial_i'f|^2_{L^2}\\
		=&\int_{\bbT^2}(D_t\nna^{s-3/2} \partial_i'f)(D_t^2\nna^{s-3/2} \partial_i'f)-\frac{1}{2}\uline{u_j}\partial_j'(D_t\nna^{s-3/2} \partial_i'f)^2dx'\\
		=&\int_{\bbT^2}(D_t\nna^{s-3/2} \partial_i'f)(\nna^{s-3/2}D_t^2 \partial_i'f)dx'\\
		&+\int_{\bbT^2}(D_t\nna^{s-3/2} \partial_i'f)([D_t,\nna^{s-3/2}]D_t \partial_i'f+D_t[D_t,\nna^{s-3/2}] \partial_i'f)dx'\\
		&+\int_{\bbT^2}\frac{1}{2}\partial_j'\underline{u_j}(D_t\nna^{s-3/2} \partial_i'f)^2dx'\\
		=&\int_{\bbT^2}(D_t\nna^{s-3/2} \partial_i'f)(\nna^{s-3/2}D_t^2 \partial_i'f)dx'\\
		&+\int_{\bbT^2}(D_t\nna^{s-3/2} \partial_i'f)([D_t,\nna^{s-3/2}]D_t \partial_i'f+[\uline{u_j},\nna^{s-3/2}] D_t\partial_j'\partial_i'f)dx'\\
		&+\int_{\bbT^2}(D_t\nna^{s-3/2} \partial_i'f)([\uline{D_tu_j},\nna^{s-3/2}] \partial_j'\partial_i'f)+\frac{1}{2}\partial_j'\underline{u_j}(D_t\nna^{s-3/2} \partial_i'f)^2dx'\\
		\le& \int_{\bbT^2}(D_t\nna^{s-3/2} \partial_i'f)(\nna^{s-3/2}D_t^2 \partial_i'f)dx'\\
    &+C||\vu||^2_{H^{s-1}(\Omega_f)}|D_t \nna^{s-3/2}\partial_i'f|^2_{L^2}+C||D_t\vu||_{H^{s-1}(\Omega_f)}|D_t \nna^{s-3/2}\partial_i'f|_{L^2}|\partial_i'f|_{H^{s-3/2}}.
	\end{align*}
	From the first equation of \eqref{eq:els}, using \eqref{ineq:dtp} and \eqref{ineq:p}, we immediately have $||D_t\vu||_{H^{s-1}(\Omega_f)}\le C E^s_\e$. Next we consider the estimate of
	\begin{align*}
		&\int_{\bbT^2}(D_t\nna^{s-3/2} \partial_i'f)(\nna^{s-3/2}D_t^2 \partial_i'f)dx'\\
		=&\int_{\bbT^2}(D_t\nna^{s-3/2} \partial_i'f)(\nna^{s-3/2}\uline{\partial_3p}\vN_f\cdot\nabla\cH_f(\partial_i'f))dx'\\
		&+\int_{\bbT^2}(D_t\nna^{s-3/2} \partial_i'f)(\nna^{s-3/2}\sum^3_{k=1}D_{\vF_k}^2\partial_s'f)dx'\\
		&-\int_{\bbT^2}(D_t\nna^{s-3/2} \partial_i'f)\big(\nna^{s-3/2}(\vN_f\cdot\uline{\nabla q_i})\big)dx'\\
		&+\int_{\bbT^2}(D_t\nna^{s-3/2} \partial_i'f)\nna^{s-3/2}\big((\partial_i'\uline{F_{sk}})D_{\vF_k}\partial_s'f-2(\pa_i'\uline{u_j})D_t\pa_j'f\big)dx'\\
		&+\int_{\bbT^2}(D_t\nna^{s-3/2} \partial_i'f)\nna^{s-3/2}( \varepsilon\Delta' \partial_i'f- \partial_1'\partial_i'f \uline{\partial_1\bar p}-\partial_2'\partial_i'f \uline{\partial_2\bar p})dx'\\
		\triangleq&I_1+I_2+I_3+I_4+I_5.
	\end{align*}
\textit{ Estimate of $I_1$}.
Recalling the definition of $\aaa$, we rewrite
\begin{align*}
	I_1=&-\int_{\bbT^2}(D_t\nna^{s-3/2} \partial_i'f)\big(\nna^{s-3/2}\aaa\cN_f(\partial_i'f)\big)dx'\\
	=&-\int_{\bbT^2}\aaa(D_t\nna^{s-3/2} \partial_i'f)(\cN_f\nna^{s-3/2}(\partial_i'f))dx'\\
&-\int_{\bbT^2}\aaa(D_t\nna^{s-3/2} \partial_i'f)([\nna^{s-3/2},\cN_f](\partial_i'f))dx'\\
	&-\int_{\bbT^2}(D_t\nna^{s-3/2} \partial_i'f)([\nna^{s-3/2},\aaa]\cN_f(\partial_i'f))dx'.
\end{align*}
The first term is the principle one. To estimate it, we calculate
\begin{align*}
	&\frac{d}{dt}\int_{\bbT^2}\aaa(\cN_f\nna^{s-3/2}\partial_i'f)(\nna^{s-3/2}\partial_i'f)dx'\\
	=&\int_{\bbT^2}D_t \big[\aaa(\cN_f\nna^{s-3/2}\partial_i'f)(\nna^{s-3/2}\partial_i'f)\big]-\uline{u_j}\partial_j'\big[\aaa(\cN_f\nna^{s-3/2}\partial_i'f)(\nna^{s-3/2}\partial_i'f)\big]dx'\\
	=&\int_{\bbT^2}D_t(\aaa)(\cN_f\nna^{s-3/2}\partial_i'f)(\nna^{s-3/2}\partial_i'f)+\aaa  (D_t\cN_f\nna^{s-3/2}\partial_i'f)(\nna^{s-3/2}\partial_i'f)dx'\\
	&+\int_{\bbT^2}\aaa(\cN_f\nna^{s-3/2}\partial_i'f)D_t\nna^{s-3/2}\partial_i'fdx'+\int_{\bbT^2}(\partial_j'\uline{u_j})\aaa(\cN_f\nna^{s-3/2}\partial_i'f)(\nna^{s-3/2}\partial_i'f)dx'\\
  =&2\int_{\bbT^2}\aaa (D_t\nna^{s-3/2}\partial_i'f)(\cN_f\nna^{s-3/2}\partial_i'f)dx'\\
  &+\int_{\bbT^2}D_t(\nna^{s-3/2}\partial_i'f)([\cN_f,\aaa]\nna^{s-3/2}\partial_i'f)+\aaa([D_t,\cN_f]\nna^{s-3/2}\partial_i'f)(\nna^{s-3/2}\partial_i'f)dx'\\
  &+\int_{\bbT^2}D_t(\aaa)(\cN_f\nna^{s-3/2}\partial_i'f)(\nna^{s-3/2}\partial_i'f)+(\partial_j'\uline{u_j})\aaa(\cN_f\nna^{s-3/2}\partial_i'f)(\nna^{s-3/2}\partial_i'f)dx'.
\end{align*}
Where we use the following result. Since $\cN_f$ is a self-adjoint operator, we have
\begin{align*}
	&\int_{\bbT^2}\aaa  (D_t\cN_f\nna^{s-3/2}\partial_i'f)(\nna^{s-3/2}\partial_i'f)dx'\\
  =&\int_{\bbT^2}\aaa (D_t\nna^{s-3/2}\partial_i'f)(\cN_f\nna^{s-3/2}\partial_i'f)dx'+\int_{\bbT^2}(D_t\nna^{s-3/2}\partial_i'f)([\cN_f,\aaa]\nna^{s-3/2}\partial_i'f)dx'\\
	&+\int_{\bbT^2}\aaa([D_t,\cN_f]\nna^{s-3/2}\partial_i'f)(\nna^{s-3/2}\partial_i'f)dx'.
\end{align*}

We write
\begin{align*}
  \int_{\bbT^2}\aaa(\cN_f\nna^{s-3/2}\partial_i'f)(\nna^{s-3/2}\partial_i'f)dx'
  =&\int_{\bbT^2}\aab(\cN_f\nna^{s-3/2}\partial_i'f)(\nna^{s-3/2}\partial_i'f)dx'\\
  &-\int_{\bbT^2}(\aab-\mathfrak a)(\cN_f\nna^{s-3/2}\partial_i'f)(\nna^{s-3/2}\partial_i'f)dx'.
\end{align*}
Recalling the definition of $\aat$ and using integration by parts, one can get
\begin{align*}
  &\int_{\bbT^2}\aab(\nna^{s-3/2} \partial_i'f)(\cN_f\nna^{s-3/2} \partial_i'f)dx'\\
  =&\int_{\Omega_f}\tilde{\mathfrak a}(\nabla\cH_f(\nna^{s-3/2} \partial_i'f))^2dx +\frac{1}{2}\int_{\bbT^2}\vN_f\cdot \uline{\nabla \tilde{\mathfrak a}}(\nna^{s-3/2} \partial_i'f)^2dx'.
\end{align*}
Summarizing the above results gives that
\begin{align*}
	I_1=&-\frac{d}{dt}\frac{1}{2}\int_{\Omega_f}\tilde{\mathfrak a}\big(\nabla\cH(\nna^{s-3/2}\partial_i'f)\big)^2dx\\
	&-\frac{d}{dt}\frac{1}{4}\int_{\bbT^2}\vN_f\cdot \uline{\nabla \tilde{\mathfrak a}}(\nna^{s-3/2} \partial_i'f)^2dx'\\
  &+\frac{1}{2}\frac{d}{dt}\int_{\bbT^2}(\aab-\mathfrak a)(\cN_f\nna^{s-3/2}\partial_i'f)(\nna^{s-3/2}\partial_i'f)dx'\\
	&+\frac{1}{2}\int_{\bbT^2}D_t(\nna^{s-3/2}\partial_i'f)([\cN_f,\aaa]\nna^{s-3/2}\partial_i'f)dx'\\
	&+\frac{1}{2}\int_{\bbT^2}\aaa([D_t,\cN_f]\nna^{s-3/2}\partial_i'f)(\nna^{s-3/2}\partial_i'f)dx'\\
	&+\frac{1}{2}\int_{\bbT^2}D_t(\aaa)(\cN_f\nna^{s-3/2}\partial_i'f)(\nna^{s-3/2}\partial_i'f)dx'\\
	&+\frac{1}{2}\int_{\bbT^2}(\partial_j'\uline{u_j})\aaa(\cN_f\nna^{s-3/2}\partial_i'f)(\nna^{s-3/2}\partial_i'f)dx'\\
	&-\int_{\bbT^2}\aaa(\nna^{s-3/2}D_t \partial_i'f)([\nna^{s-3/2},\cN_f](\partial_i'f))dx'\\
  &-\int_{\bbT^2}(\nna^{s-3/2}D_t \partial_i'f)([\nna^{s-3/2},\aaa]\cN_f(\partial_i'f))dx'.
\end{align*}
We will control  all the terms on the right hand side except the first one by using the energy.

From Lemma \ref{lem:com-f-DN}, Lemma \ref{lem:com-t-DN-2} and Lemma \ref{lem:com-ds-DN} and the commutator estimates for the Dirichlet-Neumann operators,
we have \begin{align*}
	\int_{\bbT^2}D_t(\nna^{s-3/2}\partial_i'f)([\cN_f,\mathfrak a]\nna^{s-3/2}\partial_i'f)dx'
	\lesssim |D_t(\nna^{s-3/2}\partial_i'f|_{L^2}|f|_{H^3}|\aaa|_{H^{3/2}}|\partial_i'f|_{H^{s-1}},\\
	\int_{\bbT^2}\mathfrak a([D_t,\cN_f]\nna^{s-3/2}\partial_i'f)(\nna^{s-3/2}\partial_i'f)dx'
	\lesssim|\aaa|_{H^{5/2}}|f|^3_{H^{s-1/2}}||\vu||_{H^{4}(\Omega_f)}|\partial_i'f|^2_{H^{s-1}},\\
	\int_{\bbT^2}\aaa(D_t\nna^{s-3/2} \partial_i'f)([\nna^{s-3/2},\cN_f](\partial_i'f))dx'
	\lesssim|\aaa|_{L^\infty}|f|^2_{H^{s-1/2}}|D_t(\nna^{s-3/2}\partial_i'f|_{L^2}|\partial_i'f|_{H^{s}}.
\end{align*}
  Moreover, by Lemma \ref{lem:DN-b} we can get
\begin{align*}
	\int_{\bbT^2}D_t(\mathfrak a)(\cN_f\nna^{s-3/2}\partial_i'f)(\nna^{s-3/2}\partial_i'f)dx'&\lesssim |f|_{H^3}|D_t\aaa|_{H^{3/2}}|\partial_i'f|^2_{H^{s-1}},\\
	\int_{\bbT^2}(\partial_j'u_j)\mathfrak a(\cN_f\nna^{s-3/2}\partial_i'f)(\nna^{s-3/2}\partial_i'f)dx'&\lesssim|f|_{H^3}|\aaa|_{H^{3/2}}||\vu||_{H^{3}(\Omega_f)}|\partial_i'f|^2_{H^{s-1}}.
\end{align*}
Likewise, using Lemma \ref{lem:DN} and Lemma \ref{lem:com-f-ds}, one has
\begin{align*}
	\int_{\bbT^2}(D_t\nna^{s-3/2} \partial_i'f)([\nna^{s-3/2},\aaa]\cN_f(\partial_i'f))dx'\lesssim|D_t(\nna^{s-3/2}\partial_i'f|_{L^2}|\aaa|_{H^{s-3/2}}|\partial_i'f|_{H^{s-3/2}}.
\end{align*}
Recalling the definition of $\bar{\mathfrak a}$, $\bar{\mathfrak a}-\mathfrak a$ is a function independent of $t$ with support on $\Gamma^2$, we can use $|D_{F_k}\partial_i'f|^2_{H^{s-3/2}}$ to control
\begin{align*}
	&\frac{d}{dt}\frac{1}{2}\int_{\bbT^2}(\bar{\mathfrak a}-\mathfrak a)(\cN_f\nna^{s-3/2}\partial_i'f)(\nna^{s-3/2}\partial_i'f)dx'\\
	=&\frac{1}{2}\int_{\bbT^2}(\bar{\mathfrak a}-\mathfrak a)(D_t\cN_f\nna^{s-3/2}\partial_i'f)(\nna^{s-3/2}\partial_i'f)\\
	&\qquad\qquad+(\bar{\mathfrak a}-\mathfrak a)(\cN_f\nna^{s-3/2}\partial_i'f)(D_t\nna^{s-3/2}\partial_i'f)dx'\\
	&-\frac{1}{2}\int_{\bbT^2}(\bar{\mathfrak a}-\mathfrak a)\uline{u_j}\partial_j'\big((\cN_f\nna^{s-3/2}\partial_i'f)(\nna^{s-3/2}\partial_i'f)\big)dx'\\
	=&\frac{1}{2}\int_{\bbT^2}(\bar{\mathfrak a}-\mathfrak a)([D_t,\cN_f]\nna^{s-3/2}\partial_i'f)(\nna^{s-3/2}\partial_i'f)\\
	&\qquad\qquad+(D_t\nna^{s-3/2}\partial_i'f)\cN_f\big((\bar{\mathfrak a}-\mathfrak a)\nna^{s-3/2}\partial_i'f\big)dx'\\
	&+\frac{1}{2}\int_{\bbT^2}([(\bar{\mathfrak a}-\mathfrak a),\cN_f]\nna^{s-3/2}\partial_i'f)(D_t\nna^{s-3/2}\partial_i'f)\\
	&\qquad\qquad+\cN_f\big((\bar{\mathfrak a}-\mathfrak a)\nna^{s-3/2}\partial_i'f)(D_t\nna^{s-3/2}\partial_i'f\big)
	dx'\\
	&+\frac{1}{2}\int_{\bbT^2}\big(\partial_j'(\uline{u_j}(\bar{\mathfrak a}-\mathfrak a))\big)(\cN_f\nna^{s-3/2}\partial_i'f)(\nna^{s-3/2}\partial_i'f)dx'\\
	\lesssim&|\bar{\mathfrak a}-\mathfrak a|_{H^{3/2}}|f|_{H^4}||\vu||_{H^{4}(\Omega_f)}|\partial_i'f|^2_{H^{s-1}}+ |D_t\nna^{s-3/2}\partial_i'f|_{L^2}|(\bar{\mathfrak a}-\mathfrak a)\nna^{s-3/2}\partial_i'f|_{H^1}\\
	&+|\bar{\mathfrak a}-\mathfrak a|_{H^{3/2}}|f|_{H^3}|\partial_i'f|_{H^{s-1}}|D_t\nna^{s-3/2}\partial_i'f|_{L^2}+|\partial_j'(\uline{u_j}(\bar{\mathfrak a}-\mathfrak a))|_{H^{3/2}}|\partial_i'f|^2_{H^{s-1}}\\
	\lesssim&\big(1+|\bar{\mathfrak a}-\mathfrak a|_{H^{3/2}}+||\vu||_{H^{4}(\Omega_f)}+|f|_{H^4}\big)^3\big(|\partial_i'f|_{H^{s-1}}+|D_{F_k}\partial_i'f|_{H^{s-3/2}}+|D_t\nna^{s-3/2}\partial_i'f|_{L^2}\big)^2,
\end{align*}
where we have used Lemma \ref{lem:DN} and Lemma \ref{lem:com-f-DN}-\ref{lem:com-t-DN-2}. In a similar way, we have
\begin{align*}
	&\frac{d}{dt}\frac{1}{2}\int_{\bbT^2}\vN_f\cdot \uline{\nabla \tilde{\mathfrak a}}(\nna^{s-3/2} \partial_i'f)^2dx'\\
	=&\frac{1}{2}\int_{\bbT^2}D_t(\vN_f\cdot \uline{\nabla \tilde{\mathfrak a}})(\nna^{s-3/2} \partial_i'f)^2
	+2\vN_f\cdot \uline{\nabla \tilde{\mathfrak a}}(\nna^{s-3/2} \partial_i'f)(D_t\nna^{s-3/2} \partial_i'f)dx'\\
	&+\frac{1}{2} \int_{\bbT^2}(\partial_j'\uline{u_j})\vN\cdot \uline{\nabla \tilde{\mathfrak a}}(\nna^{s-3/2} \partial_i'f)^2dx'\\
	\lesssim&|D_t(\vN_f\cdot \uline{\nabla \tilde{\mathfrak a}})|_{L^2}|\nna^{s-3/2} \partial_i'f|^2_{L^4}\\
  &+|\vN_f\cdot \uline{\nabla \tilde{\mathfrak a}}|_{L^\infty}|D_t\nna^{s-3/2} \partial_i'f|_{L^2}|\partial_i'f|_{H^{s-3/2}}+|(\partial_j'\uline{u_j})\vN\cdot \uline{\nabla \tilde{\mathfrak a}}|_{L^\infty}|\partial_i'f|^2_{H^{s-3/2}}\\
	\lesssim&||\vu||^2_{H^{3}}|f|^2_{H^4}\big(|D_t\aab|_{H^{3/2}(\Omega_f)}+|\aab|_{H^{5/2}}\big)\big(|\partial_i'f|_{H^{s-1}}+|D_t\nna^{s-3/2} \partial_i'f|_{L^2}\big)^2.
\end{align*}
Recalling the definition of $\aab$ and $\aat$,  \eqref{ineq:dtp} and \eqref{ineq:p} ensure that all the norms related to these functions in our proof can be controlled by energy $E^s_\e(t)$.

Combining above estimates, it follows that
\begin{align*}
	I_1+\frac{d}{dt}\frac{1}{2}\int_{\Omega_f}\tilde{\mathfrak a}\big(\nabla\cH(\nna^{s-3/2}\partial_i'f)\big)^2dx\le P(E^s_\e(t)),
\end{align*}
where $P$ is a polynomial.

\vspace{10pt}

\textit{ Estimate of $I_2$}.
Using the fact that $[D_t,D_{\vF_k}]=0$, we have
\begin{align*}
	I_2=&\int_{\bbT^2}(D_t\nna^{s-3/2} \partial_i'f)(\nna^{s-3/2}\sum^3_{k=1}D_{\vF_k}^2\partial_s'f)dx'\\
	=&\sum^3_{k=1}\int_{\bbT^2}(D_t\nna^{s-3/2} \partial_i'f)([\nna^{s-3/2},D_{\vF_k}]D_{\vF_k}\partial_s'f+D_{\vF_k}[\nna^{s-3/2},D_{\vF_k}]\partial_s'f)dx'\\
	&+\sum^3_{k=1}\int_{\bbT^2}(D_t\nna^{s-3/2} \partial_i'f)(D_{\vF_k}D_{\vF_k}\nna^{s-3/2}\partial_s'f)dx'\\
	=&-\sum^3_{k=1}\int_{\bbT^2}D_t(D_{\vF_k}\nna^{s-3/2} \partial_i'f)(\nna^{s-3/2}D_{\vF_k}\partial_s'f)dx'\\
	&-\sum^3_{k=1}\int_{\bbT^2}(\partial_r'\uline{F_{rk}})(D_t\nna^{s-3/2} \partial_i'f)(D_{\vF_k}\nna^{s-3/2}\partial_i'f)dx'\\
	&\quad+\sum^3_{k=1}\int_{\bbT^2}(D_t\nna^{s-3/2} \partial_i'f)([\nna^{s-3/2},D_{\vF_k}]D_{\vF_k}\partial_s'f+D_{\vF_k}[\nna^{s-3/2},D_{\vF_k}]\partial_s'f)dx'\\
	\triangleq&I_{21}+I_{22}+I_{23}.
\end{align*}

Lemma \ref{lem:com-f-ds}  gives that
\begin{align*}
	I_{22}+I_{23}\lesssim&|D_{F_k}\nna^{s-3/2}\partial_i'f|^2_{L^2}|\vF|_{H^s(\Omega_f)}|D_t \nna^{s-3/2}\partial_i'f|^2_{L^2}.
\end{align*}
Since
\begin{align*}
	I_{21}+\frac{d}{dt}\frac{1}{2}|D_{F_k}\nna^{s-3/2}\partial_i'f|^2_{L^2}=&-\int_{\bbT^2}\uline{u_j} \partial_j'(\nna^{s-3/2}D_{F_k}\partial_i'f)^2dx'\\
	\le&|\nabla u|_{L^\infty(\Omega_f)}|D_{F_k}\nna^{s-3/2}\partial_i'f|^2_{L^2},
\end{align*}
we obtain the estimate of $I_2$:
\begin{align*}
	I_2+\frac{d}{dt}\frac{1}{2}|D_{F_k}\nna^{s-3/2}\partial_i'f|^2_{L^2}\lesssim P(E^s_\e(t)).
\end{align*}

\vspace{6pt}

\textit{ Estimate of $I_3$, $I_4$ and $I_5$}.
Recalling that $q_i=\partial_i\mathring p+\partial_3\mathring p\cH_f(\partial_i'f)$, $q_i|_{\Gamma_f}=\partial_i\mathring p+\partial_3\mathring p\partial_i'f=0$, from the definition of $\mathring p$ and $\cH_f$, we have
\begin{align*}
	\pa_3q_i|_{\Gamma^-}&=\big(\pa_i\pa_3\mathring p+\pa_{33}\mathring p\cH_f(\partial_i'f)+\partial_3\mathring p\partial_3\cH_f(\partial_i'f)\big)_{\Gamma^-}=0,\\
	\Delta q_i&=\partial_i\Delta \mathring p+\partial_3\Delta \mathring p\cH_f(\partial_i'f)+2\partial_3\nabla \mathring p\cdot\nabla\cH_f(\partial_i'f),
\end{align*}
which implies
\begin{align*}
	||q_i||_{H^{s}(\Omega_f)}&\le ||\partial_i\Delta \mathring p+\partial_3\Delta \mathring p\cH_f(\partial_i'f)+2\partial_3\nabla \mathring p\cdot\nabla\cH_f(\partial_i'f)||_{H^{s-2}(\Omega_f)}\\
	&\lesssim ||\mathring p||_{H^{s+2}(\Omega_f)}+||\mathring p||_{H^{s+1}(\Omega_f)}|\partial_i'f|_{H^{s-5/2}}+||\mathring p||_{H^{s+1}(\Omega_f)}|\partial_i'f|_{H^{s-3/2}}.
\end{align*}
Accordingly, it holds that
\begin{align*}
	I_3&=-\int_{\bbT^2}(D_t\nna^{s-3/2} \partial_i'f)\big(\nna^{s-3/2}(\vN_f\cdot\uline{\nabla q_i})\big)dx'\\
	&\le|D_t \nna^{s-3/2}\partial_i'f|_{L^2}||\vN\cdot\uline{\nabla q_i}|_{H^{s-3/2}}\le P(E^s_\e(t)).
\end{align*}
The term $I_4$ can be estimated in a similar way.  For $I_5$, we have
\begin{align*}
	I_5=\int_{\bbT^2}(D_t\nna^{s-3/2} \partial_i'f)\nna^{s-3/2}( \varepsilon\Delta' \partial_i'f- \partial_1'\partial_i'f \uline{\partial_1\bar p}-\partial_2'\partial_i'f \uline{\partial_2\bar p})dx'.
\end{align*}
Recalling the definition of $\bar p$, we have
\begin{align*}
	&\int_{\bbT^2}(D_t\nna^{s-3/2} \partial_i'f)\nna^{s-3/2} (\varepsilon\Delta' \partial_i'f)dx'\\
	=&\varepsilon\int_{\bbT^2}(D_t\nna^{s-3/2} \partial_i'f)\big(\Delta'\nna^{s-3/2} \partial_i'f\big)dx'\\
	=&-\varepsilon\int_{\bbT^2}\nabla'(D_t\nna^{s-3/2} \partial_i'f)\cdot\big(\nabla'\nna^{s-3/2}  \partial_i'f\big)dx'\\
	=&-\frac{\varepsilon}{2}\pa_t\int_{\bbT^2}(\nabla'\nna^{s-3/2} \partial_i'f)\cdot\big(\nabla'\nna^{s-3/2} \partial_i'f\big)dx'\\
&	-\varepsilon\int_{\bbT^2}([ \nabla', \uline{u_j}])\nna^{s-3/2}\partial_j'\partial_i'f\cdot\big(\nabla'\nna^{s-3/2} \partial_i'f\big)dx'\\
	&+\frac{\varepsilon}{2}\int_{\bbT^2}(\pa_j'\uline{u_j})(\nna^{s-3/2} \nabla'\partial_i'f)^2dx'.
\end{align*}
Using Lemma \ref{lem:DN-1}, we have
\begin{align*}
	\int_{\bbT^2}(D_t\nna^{s-3/2} \partial_i'f)\nna^{s-3/2}(\partial_j'\partial_i'f \partial_j\bar p)dx'
	\lesssim&|D_t \nna^{s-3/2}\partial_i'f|_{L^2}|\partial_j'\partial_i'f|_{H^{s-3/2}}|\uline{\partial_j\bar p}|_{H^{s-3/2}}\\
	\lesssim&|D_t \partial_i'f|_{H^{s-3/2}}|\partial_i'f|_{H^{s-1/2}}|\varepsilon \bar{\mathcal N}^{-1}\Delta' f|_{H^{s-3/2}}\\
	\lesssim&|D_t \partial_i'f|_{H^{s-3/2}}|f|_{H^{s-3/2}}\varepsilon|f|^2_{H^{s+1/2}}.
\end{align*}
Combining these estimates, we have
\begin{align*}
	I_5+\frac{d}{dt}\frac{1}{2}\varepsilon |\partial_i'f|^2_{H^{s+1}}\le P(E^s_\e(t)).
\end{align*}

{\bf Estimate of $(\vu,\vF)$}.
Recalling the boundary condition on $\Gamma_f$
\begin{align*}
	\vu\cdot\vN_f=\pa_t f,\quad\vF_j\cdot\vN_f=0,
\end{align*}
we have
\begin{align*}
	D_t\partial_i'f&~=\partial_i'D_tf-\partial_i'\uline{u_j}\partial_j'f=\partial_i'\uline{u_3}-\partial_i'\uline{u_j}\partial_j'f=\partial_i'\uline{\vu}\cdot\vN_f,\\
	\uline{F_{sk}}\partial_i'\partial_s'f&~=\partial_i'(\uline{\vF_k}\cdot\vN_f)-\uline{\vF_k}\cdot \partial_i'\vN_f=\partial_i'\uline{\vF_k}\cdot\vN_f.
\end{align*}
From Lemma \ref{lem:ell-est}, it holds that
\begin{align*}
	C||\vu||_{H^{s}(\Omega_f)}\le \big(||\nabla\times\vu||_{H^{s-1}(\Omega_f)}+\sum_{i=1,2}|D_t\partial_i'f|_{H^{s-3/2}(\bbT^2)}+||\vu||_{H^{s-1}(\Omega_f)}\big)\le CE^s_\e(t),\\
	C||\vF_k||_{H^{s}(\Omega_f)}\le\big(||\nabla\times\vF_k||_{H^{s-1}(\Omega_f)}+\sum_{i=1,2}|D_{\vF_k}\partial_i'f|_{H^{s-3/2}(\bbT^2)}+||\vF_k||_{H^{s-1}(\Omega)}\big)\le CE^s_\e(t).
\end{align*}
Then it suffices to prove  that
\begin{align*}
	||\nabla\times\vu||^2_{H^{s-1}(\Omega_f)}+||\vu||^2_{H^{s-1}(\Omega_f)}+||\nabla\times\vF_k||^2_{H^{s-1}(\Omega_f)}+||\vF||^2_{H^{s-1}(\Omega_f)}\le \int^T_0P(E^s_\e(t))dt.
\end{align*}
Similar to  \cite[Proposition 4.4]{LWZ}, it is direct to obtain that
\begin{align*}
	\frac{d}{dt}\Big(||\nabla\times\vu||^2_{H^{s-1}(\Omega_f)}+||\nabla\times\vF_k||^2_{H^{s-1}(\Omega_f)}\Big)\lesssim P(E^s_\e(t))(||\nabla\times\vu||^2_{H^{s-1}(\Omega_f)}+||\nabla\times\vF_k||^2_{H^{s-1}(\Omega_f)}).
\end{align*}
With the help of evolution equations of $\vu$ and $\vF$ in \eqref{eq:els}, we have
\begin{align*}
	&\frac{d}{dt}(||\nabla^{s-1}\vu||^2_{L^2(\Omega_f)}+\sum^3_{j=1}||\nabla^{s-1}\vF_j||^2_{L^2(\Omega_f)})\\
	\lesssim& \int_{\Omega_f}\na^{s-1}(\sum^3_{j=1}\vF_j\cdot\nabla \vF_j-\nabla p)\cdot \na^{s-1}\vu dx
	+\sum^3_{j=1}\int_{\Omega_f}\na^{s-1}(\vF_j\cdot\nabla \vu)\cdot \na^{s-1}\vF_j dx\\
	&\qquad+||\vu||^3_{H^{s-1}(\Omega_f)}+\sum^3_{j=1}||\vF_j||^2_{H^{s-1}(\Omega_f)}||\vu||_{H^{s-1}(\Omega_f)}\\
	\lesssim&-\sum^3_{j=1}\int_{\Omega_f}(\nabla\cdot\vF_j)\na^{s-1}\vu\cdot \na^{s-1}\vF_j dx\\
	&\qquad+||p||_{H^{s}(\Omega_f)}||\vu||_{H^{s-1}(\Omega_f)}+||\vu||^3_{H^{s-1}(\Omega_f)}+\sum^3_{j=1}||\vF_j||^2_{H^{s-1}(\Omega_f)}||\vu||_{H^{s-1}(\Omega_f)}\\
	\lesssim& \Big(\sum^3_{j=1}||\vF_j||_{H^{s-1}(\Omega_f)}+||\vu||_{H^{s-1}(\Omega_f)}+||p||_{H^{s}(\Omega_f)}\Big)^3.
\end{align*}
From \eqref{ineq:p} we have $||p||_{H^{s}(\Omega_f)}\lesssim E^s_\e(t)$, which implies
\begin{align*}
	\frac{d}{dt}(||\vu||_{H^{s}(\Omega_f)}+||\vF_j||_{H^{s}(\Omega_f)})\le P(E^s_\e(t)).
\end{align*}

{\bf Completing the proof}. Combining all these estimates above, we have
\begin{align*}
	\frac{d}{dt}E^s_\e(t)\le P(E^s_\e(t)).
\end{align*}
The proposition follows from Gronwall's inequality.
\end{proof}
\subsection{Approximation sequence to the solution of the original system}
Now, we give the proof of Proposition \ref{pro:app-seq}.
\begin{proof}
  Choose $\bar\e=\min(\hat\e,\tilde\e)$, where $\hat\e $ and $\tilde\e$ are given in \eqref{eq:ini-energy} and Lemma \ref{lem:tay-con} separately. For each $\e\le\bar \e$, we have
  \begin{align*}
    \e|f^\e_0|^2_{H^{s+1/2}}+|f^\e_0|^2_{H^{s-1/2}}+||\vu^\e_0||^2_{H^s(\Omega_{f^\e_0})}+||\vF^\e_0||^2_{H^s(\Omega_{f^\e_0})}\le C M^s_0.
  \end{align*}
  Using Proposition \ref{pro:reg-wp}, we solve $(f,\vu,\vF)$ on $[0,T(\e,(2C+\bar C)M^s_0)]$ which satisfies
  \begin{align*}
   \sup_{t\in[0,T]} \Big(\e|f|^2_{H^{s+1/2}}+|f|^2_{H^{s-1/2}}+||\vu||^2_{H^s(\Omega_{f})}+||\vF||^2_{H^s(\Omega_{f})}\Big)\le 2(2C+\bar C)M^s_0,
  \end{align*}
  where $\bar C$ is given in Proposition \ref{pro:e-energy}. Lemma \ref{lem:tay-con} shows that for $0\le t\le \hat T$, $(f,\vu,\vF)$ satisfies the mixed type stability condition \eqref{condition:s2}, where $\hat T=\min(\widetilde T,T)$. Proposition \ref{pro:e-energy} shows that there exists a constant $\bar C$ such that
  \begin{align*}
    \sup_{t\in[0,\hat T]}E_\e^s(t)\le \bar CE_\e^s(0)e^{\bar C\hat T}.
  \end{align*}
If $\bar CE_\e^s(0)e^{\bar C\hat T}\le(2C+\bar C)M^s_0$, then $(f,\vu,\vF)(\hat T)$ still satisfies the assumption of Proposition \ref{pro:reg-wp}.
Therefore, we can extend $(f,\vu,\vF)$ to $[0,\hat T+T]$. The above steps can be repeated until the energy is larger than $(2C+\bar C)M^s_0$.
Since $\bar C$ is independent of $\e$,  we can extend $(f,\vu,\vF)$ to a lifespan $\bar T$ independent of $\e$. This finishes the proof of Proposition \ref{pro:app-seq}.
\end{proof}
\section{Well-posedness of the original system}
\subsection{Taking the limit $\e\to0$}
To prove existences of solutions of the original system (\ref{eq:els})-(\ref{ini:els}), we consider the limit $\e\to0$.
For the solution $(f^\e,\vu^\e,\vF^\e)$ of each $\e$-regularized system given in Proposition \ref{pro:app-seq}, we pull $(\vu^\e,\vF^\e)$ back to a fixed domain.

We define
\begin{align*}
	\tilde\vu^\e=\vu^\e\circ\Phi_{f^\e},\quad \tilde\vF^\e=\vF^\e\circ\Phi_{f^\e},
\end{align*}
where $\Phi_{f^\e}:\Omega_{f_0}\to\Omega_{f^\e}$ is the harmonic coordinate defined in Section 2. Then by Lemma \ref{lem:basic}, the following energy
\begin{align*}
	\tilde E^s_\varepsilon(t)=&|D_t \nna^{s-3/2}\partial_i'f^\e|^2_{L^2}+\sum^3_{k=1}|D_{F^\e_k}\nna^{s-3/2} \partial_i'f^\e|^2_{L^2}+\varepsilon |\partial_i'f^\e|^2_{H^{s-1/2}}+|\partial_i'f^\e|^2_{H^{s-1}}\\
	&+|f^\e|^2_{L^2(\mathbb T^2)}+|\pa_tf^\e|^2_{L^2(\mathbb T^2)}+||\tilde\vu^\e||^2_{H^{s}(\Omega_{f_0})}+||\tilde\vF^\e||^2_{H^{s}(\Omega_{f_0})}
\end{align*}
also have an $\e$-independent upper bound on $[0,T]$.

Taking $\e\to0$, there exists a subsequence of $(f^\e,\tilde\vu^\e,\tilde\vF^\e)$ which converges weakly to a limit which we denote by $(f,\tilde\vu,\tilde\vF)$ satisfying
\begin{align*}
	D_tf\in L^2([0,T],H^{s-1/2}(\bbT^2)),&f\in L^2([0,T],H^{s}(\bbT^2)), D_{\tilde\vF_k}f\in L^2([0,T],H^{s-1/2}(\bbT^2)),\\
	\int_{\bbT^2}fdx'=0,\ \tilde\vu\in L^2([0,T],&H^{s}(\Omega_{f_0}),\tilde\vF\in L^2([0,T],H^{s}(\Omega_{f_0}).
\end{align*}
In addition, this convergence will be strong in spaces with low regularity. For
\begin{align*}
	\vu=\tilde\vu\circ\Phi_f^{-1},\quad \vF=\tilde\vF\circ\Phi_f^{-1},	
\end{align*}
standard arguments show that $(f,\vu,\vF)$ solves (\ref{eq:els})-(\ref{ini:els}).

Define the energy
\begin{align*}
	E^s(t)=&|D_t \nna^{s-3/2}\partial_i'f|^2_{L^2}+\sum^3_{k=1}|D_{F_k} \nna^{s-3/2}\partial_i'f|^2_{L^2}+\int_{\Omega_f}\tilde{\mathfrak a}\big(\nabla\cH_f(\nna^{s-3/2}\partial_i'f)\big)^2dx\\
	&+|f|^2_{L^2}+|\pa_tf|^2_{L^2}+||\vu||^2_{H^{s}(\Omega_f)}+||\vF||^2_{H^{s}(\Omega_f)}.
\end{align*}
Similar to Proposition \ref{pro:e-energy}, one can also have
\begin{align*}
	\sup_{t\in[0,T]}E^s(f,\vu,\vF)(t)\le2CM^s_0,
\end{align*}
where $T$ is the $\e$-independent lifespan obtained in Section 4.

\subsection{Uniqueness}
Assume that $(f^A,\vu^A,\vF^A)$ and $(f^B,\vu^B,\vF^B)$ are two solutions to (\ref{eq:els})-(\ref{ini:els}) on $[0,T]$. We denote the difference $(f^A-f^B,\tilde\vu^A-\tilde\vu^B,\tilde\vF^A-\tilde\vF^B)$ by $(f^D,\tilde\vu^D,\tilde\vF^D)$.

For the difference functions, we introduce the following energy
\begin{align*}
	E^s_D(t)=&|D_{t}^A \nna^{s-5/2}\partial_i'f^D|^2_{L^2}+\sum^3_{k=1}|D_{F^A_k} \nna^{s-5/2}\partial_i'f^D|^2_{L^2}+\int_{\Om_{f^A}}\tilde{\mathfrak a}^A\big(\nabla\cH_{f^A}(\nna^{s-5/2}\partial_i'f^D)\big)^2dx\\
	&+|f^D|^2_{L^2}+|\pa_tf^D|^2_{L^2}+||\tilde\vu^D||^2_{H^{s-1}(\Omega_{f_0})}+||\tilde\vF^D||^2_{H^{s-1}(\Omega_{f_0})},
\end{align*}
where $D_{t}^A$ means material derivation generated from $\vu^A$, and $\tilde{\mathfrak a}^A$ is defined by $(f^A,\vu^A,\vF^A)$ in the same way as $\tilde{\mathfrak a}$. Apparently $E^s_D(0)=0$. We will prove that $E^s_D(t)\equiv0$.

First of all, by elliptic estimates we know that
\begin{align*}
	||\Phi_{f^A}-\Phi_{f^B}||_{H^{s-1/2}(\Om_{f_0})}\lesssim |f^A-f^B|_{H^{s-1}}\lesssim E^s_D(t).
\end{align*}
Recall that
\begin{align}
	\pa_t^2\partial_i' f+\uline{u_j}\pa_t\pa_j' \partial_i'f+D^2_{u}\partial_i'f-\sum^3_{k=1}D^2_{F_k} \partial_i'f-\uline{\pa_3p}\cN_f(\partial_i'f)=g,
\end{align}
where $g$ is the lower order term
\begin{align*}
	g=-\pa_t\uline{u_j}\pa_j' \partial_i'f+\sum^3_{k=1}2(\partial_i'F_{sk})D_{\vF_k}\partial_s'f-2(\pa_i'u_j)D_t\pa_j'f+2(\pa_i'u_j)(\pa_j'u_s)\pa_s'f-\vN_f\cdot\nabla q_i.
\end{align*}
For the two evolution equations of $f^A$ and $f^B$, we use $g^A$ and $g^B$ to denote their lower order terms separately. Subtracting these two equations, we have
\begin{align}\nonumber
	&\pa_t^2\partial_i' (f^D)+\uline{u^A_j}\pa_t\pa_j' \partial_i'f^D+D^2_{u^A}\partial_i'f^D-\sum^3_{k=1}D^2_{F^A_k} \partial_i'f^D-\uline{\pa_3p^A}\cN_{f^A}(\partial_i'f^D)\\ \nonumber
	=&-(\uline{u^A_j}-\uline{u^B_j})\pa_t\pa_j'\pa_i'f^B-(D^2_{u^A}-D^2_{u^B})\pa_i'f^B+\sum^3_{k=1}(D^2_{F^A_k}-D^2_{F^B_k})\partial_i'f^B\\
	&+(\uline{\pa_3p^A}\cN_{f^A}\partial_i'f^B-\uline{\pa_3p^B}\cN_{f^B}\partial_i'f^B)+g^A-g^B. \label{eq:diff}
\end{align}
The {most difficult term} in (\ref{eq:diff}) is $\uline{\pa_3p^A}\cN_{f^A}\partial_i'f^B-\uline{\pa_3p^B}\cN_{f^B}\partial_i'f^B$, which can be written as
\begin{align*}
&	\uline{\pa_3p^A}\cN_{f^A}\partial_i'f^B-\uline{\pa_3p^B}\cN_{f^B}\partial_i'f^B\\
&=(\uline{\pa_3p^A}-\uline{\pa_3p^B})\vN_{f^A}\cH_{f^A}\partial_i'f^B
+(\uline{\pa_3p^B}(\vN_{f^A}-\vN_{f^B})\cH_{f^A}\partial_i'f^B)\\
&\quad+(\uline{\pa_3p^B}\vN_{f^B}(\cH_{f^A}-\cH_{f^B})\partial_i'f^B).
\end{align*}
{The first two terms can be estimated in standard methods, while the last term can be treat in the following way.}

For each function $\psi$ defined on $\bbT^2$, we define
\begin{align*}
	\widehat{\cH}_{f^B}\psi=\cH_{f^B}\psi\circ\Phi_{f^B}\circ\Phi_{f^A}^{-1}.
\end{align*}
Then we have
\begin{equation}
  \left\{
  	\begin{array}{ll}
  		\Delta(\cH_{f^A}\psi-\widehat{\cH}_{f^B}\psi)=-\Delta\widehat{\cH}_{f^B}\psi&x\in\Om_{f^A},\\
  		\cH_{f^A}\psi-\widehat{\cH}_{f^B}\psi(x',f^A(x'))=0&x'\in\bbT^2,\\
		\cH_{f^A}\psi-\widehat{\cH}_{f^B}\psi(x',-1)=0&x'\in\bbT^2.
  	\end{array}
  \right.
\end{equation}
By the definition of harmonic coordinate, we know that
\begin{align*}
	\pa_3^2&\widehat{\cH}_{f^B}\psi=(\pa_3^2\cH_{f^B}\psi)\circ\Phi_{f^B}\circ\Phi_{f^A}^{-1}\Big((\pa_3\Phi_{f^B}^{(3)})\circ\Phi_{f^A}^{-1}\frac{1}{(\pa_3\Phi_{f^A}^{(3)})\circ\Phi_{f^A}^{-1}}\Big)^2\\
	&+(\pa_3\cH_{f^B}\psi)\circ\Phi_{f^B}\circ\Phi_{f^A}^{-1}\frac{(\pa_3^2\Phi_{f^B}^{(3)})\circ\Phi_{f^A}^{-1}(\pa_3\Phi_{f^A}^{(3)})\circ\Phi_{f^A}^{-1}-(\pa_3^2\Phi_{f^A}^{(3)})\circ\Phi_{f^A}^{-1}(\pa_3\Phi_{f^B}^{(3)})\circ\Phi_{f^A}^{-1}}{((\pa_3\Phi_{f^A}^{(3)})\circ\Phi_{f^A}^{-1})^3}.
\end{align*}
The equations for $\pa_1^2\widehat{\cH}_{f^B}\psi$ and $\pa_2^2\widehat{\cH}_{f^B}\psi$ are similar. From $(\Delta\cH_{f^B}\psi)\circ\Phi_{f^B}\circ\Phi_{f^A}^{-1}=0$, we know that
\begin{align*}
	\Delta\widehat{\cH}_{f^B}\psi=C(\nabla (\Phi_{f^A}-\Phi_{f^B}),\nabla^2(\Phi_{f^A}-\Phi_{f^B}))\circ\Phi_{f^A}^{-1}.
\end{align*}
Then the right hand side in (\ref{eq:diff}) can be controlled by $E^s_D$.

Similar to Proposition \ref{pro:reg-wp}, we can prove
\begin{align*}
	\frac{d}{dt}\Big(|D_{t}^A \partial_i'f^D|^2_{H^{s-1}(\mathbb T^2)}+\sum^3_{k=1}|D_{F^A_k} \partial_i'f^D|^2_{H^{s-1}(\mathbb T^2)}+\int_{\Om_{f^A}}\tilde{\mathfrak a}^A\big(\nabla\cH_{f^A}(\nna^{s-1}\partial_i'f^D)\big)^2dx\Big)\lesssim E^s_D(t).
\end{align*}

Now we show that
\begin{align*}
	\frac{d}{dt}(||\tilde\vu^D||^2_{H^{s-1}(\Omega_{f_0})}+||\tilde\vF^D||^2_{H^{s-1}(\Omega_{f_0})})\lesssim E^s_D.
\end{align*}
For a vector field $\tilde\vv$ defined on $\Omega_{f_0}$, we define
\begin{align*}
	\curl_C \tilde \vv&=(\curl(\tilde\vv\circ\Phi_{f^C}^{-1}))\circ\Phi_{f^C},\\
	\div_C \tilde \vv&=(\div(\tilde\vv\circ\Phi_{f^C}^{-1}))\circ\Phi_{f^C}.
\end{align*}
It is clear that
 $$C||\vu^A-\tilde\vu^B\circ\Phi_{f^A}^{-1}||^2_{H^{s-1}(\Omega_{f^A})}\le||\tilde\vu^D||^2_{H^{s-1}(\Omega_{f_0})}\le C||\vu^A-\tilde\vu^B\circ\Phi_{f^A}^{-1}||^2_{H^{s-1}(\Omega_{f^A})}.$$
Thus, we only need to estimate $||\vu^A-\tilde\vu^B\circ\Phi_{f^A}^{-1}||^2_{H^{s+1/2}(\Omega_{f^A})}$. Recalling Lemma \ref{lem:ell-est}, we know that
\begin{align*}
	&||\vu^A-\tilde\vu^B\circ\Phi_{f^A}^{-1}||_{H^{s-1}(\Omega_{f^A})}\\
	\le& C(|f^A|_{H^{s-3/2}})\Big(||\curl\vu^A-\curl\tilde\vu^B\circ\Phi_{f^A}^{-1}||_{H^{s-2}(\Omega_{f^A})}+||\div\vu^A-\div\tilde\vu^B\circ\Phi_{f^A}^{-1}||_{H^{s-2}(\Omega_{f^A})}\\
	&+\sum_{i=1,2}|\partial_i'\uline{\vu^A}\cdot\vN_{f^A}-\partial_i'\uline{\tilde\vu^B\circ\Phi_{f^A}^{-1}}\cdot\vN_{f^A}|_{H^{s-5/2}(\bbT^2)}+||\vu^A-\tilde\vu^B\circ\Phi_{f^A}^{-1}||_{H^{s-2}(\Omega_{f^A})}\Big).
\end{align*}
For the terms on the right hand side, we have
\begin{align*}
	&||\curl\vu^A-\curl\tilde\vu^B\circ\Phi_{f^A}^{-1}||_{H^{s-2}(\Omega_{f^A})}\\
	\lesssim&||\curl_A\vu^A-\curl_B\vu^B||_{H^{s-2}(\Om_{f_0})}+||(\curl_B-\curl_A)\vu^B||_{H^{s-2}(\Om_{f_0})}\\
	\lesssim &||\curl_A\vu^A-\curl_B\vu^B||_{H^{s-2}(\Om_{f_0})}+||\Phi_{f^A}-\Phi_{f^B}||_{H^{s-1}(\Om_{f_0})},\\
	&||\div\vu^A-\div\tilde\vu^B\circ\Phi_{f^A}^{-1}||_{H^{s-2}(\Omega_{f^A})}\\
	\lesssim&||\div_A\vu^A-\div_B\vu^B||_{H^{s-2}(\Om_{f_0})}+||(\div_B-\div_A)\vu^B||_{H^{s-2}(\Om_{f_0})}\\
	\lesssim&||\Phi_{f^A}-\Phi_{f^B}||_{H^{s-1}(\Om_{f_0})},\\
	&|\partial_i'\uline{\vu^A}\cdot\vN_{f^A}-\partial_i'\uline{\tilde\vu^B\circ\Phi_{f^A}^{-1}}\cdot\vN_{f^A}|_{H^{s-5/2}(\bbT^2)}\\
	\lesssim&|\partial_i'\uline{\tilde\vu^A}\cdot\vN_{f^A}-\partial_i'\uline{\tilde\vu^B}\cdot\vN_{f^B}|_{H^{s-5/2}(\bbT^2)}+	|\partial_i'\uline{\tilde\vu^B}\cdot(\vN_{f^A}-\vN_{f^B})|_{H^{s-5/2}(\bbT^2)}\\
	\lesssim&|D_{t}^A\partial_i'f^D|_{H^{s-5/2}(\bbT^2)}+|D_{u^D}\partial_i'f^B|_{H^{s-5/2}(\bbT^2)}+|f^D|_{H^{s-3/2}(\bbT^2)},
\end{align*}
where $D_u=\uline{u_1}\pa_1'+\uline{u_2}\pa_2'$.

As a result, it holds that
\begin{align*}
	&||\tilde\vu^D||^2_{H^{s-1}(\Omega_{f_0})}\\
  \lesssim &|D_{t}^A \pa_if^D|_{H^{s-3/2}(\mathbb T^2)}+|f^D|_{H^{s-3/2}(\mathbb T^2)}+||\curl_A\vu^A-\curl_B\vu^B||_{H^{s-2}(\Om_{f_0})}+||\tilde\vu^D||^2_{H^{s-2}(\Omega_{f_0})}.
\end{align*}
We have similar estimates for $||\tilde\vF^D||^2_{H^{s-1}(\Omega_{f_0})}$.

Then using the method in Proposition \ref{pro:e-energy}, it follows that
\begin{align*}
	&\frac{d}{dt}\big(||\curl_A\vu^A-\curl_B\vu^B||_{H^{s-2}(\Om_{f_0})}+||\tilde\vu^D||^2_{H^{s-2}(\Omega_{f_0})}\\
	&\qquad\quad+\sum^3_{j=1}||\curl_A\vF_j^A-\curl_B\vF_j^B||_{H^{s-2}(\Om_{f_0})}+\sum^3_{j=1}||\tilde\vF_j^D||^2_{H^{s-2}(\Omega_{f_0})}\big)\lesssim E^s_D(t).
\end{align*}
Thus, finally we have
\begin{align*}
	\frac{d}{dt}E^s_D(t)\le C(M^s_0)E^s_D(t),
\end{align*}
which finishes the proof of uniqueness.

\begin{appendix}
\section{Estimates related to the D-N operator}

\begin{lemma}\label{lem-com-f-ds}
For any function $a\in H^s(\bbT^2)$ with $s>2$, we have
\begin{equation}
	\big|[a, \langle\nabla'\rangle^s]f\big|_{L^2}\le C|a|_{H^{s}}|f|_{H^{s-1}}.
\end{equation}
Here $\nna^s$ is the $s$-order derivatives on $\bbT^2$ which is defined as follows
\begin{align*}
	\widehat{\langle \nabla'\rangle^s f}(\mathbf{k}) = \left( 1 + |\mathbf{k}|^2 \right)^{\frac{s}{2}}\hat{f}(\mathbf{k}), \quad \mathbf{k}=(k_1,k_2), \quad k_1,k_2\in\mathbb{Z}.
\end{align*}
\end{lemma}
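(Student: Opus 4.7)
The plan is to work directly in Fourier series on $\bbT^2$. Writing $\sigma_s(k)=\langle k\rangle^s$, we have
\[
\widehat{[a,\langle\nabla'\rangle^s]f}(k)=\sum_{j\in\Z^2}\hat a(k-j)\hat f(j)\bigl[\sigma_s(j)-\sigma_s(k)\bigr],
\]
so the task reduces to bounding the $\ell^2_k$ norm of this convolution-type sum by $|a|_{H^s}|f|_{H^{s-1}}$. I would split the $j$-sum into three frequency regimes depending on which of $|j|$, $|k-j|$ dominates, and treat each separately.

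In the \emph{high$-$high} regime where $|j|$ and $|k|$ are comparable, i.e.\ $|k-j|\le\tfrac12\max(|j|,|k|)$, the key input is the mean value bound
\[
|\sigma_s(j)-\sigma_s(k)|\;\lesssim\;|k-j|\,\max(\langle j\rangle,\langle k\rangle)^{s-1}\;\lesssim\;\langle k-j\rangle\,\langle j\rangle^{s-1}.
\]
Substituting, this piece is majorised by the convolution $\bigl(\langle\cdot\rangle|\hat a|\bigr)*\bigl(\langle\cdot\rangle^{s-1}|\hat f|\bigr)$, whose $\ell^2$ norm Young's inequality controls by $\|\langle\cdot\rangle\hat a\|_{\ell^1}\,|f|_{H^{s-1}}$. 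Here the hypothesis $s>2$ enters: Cauchy--Schwarz gives
\[
\|\langle k\rangle\hat a(k)\|_{\ell^1_k}\le\bigl\|\langle k\rangle^{1-s}\bigr\|_{\ell^2(\Z^2)}|a|_{H^s},
\]
and the first factor is finite precisely because $2(s-1)>2$.

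In the \emph{low$-$high} regime $|j|\le\tfrac12|k-j|$ (so $\langle k\rangle\sim\langle k-j\rangle$), the derivatives should be pushed onto $a$: use the crude bound $|\sigma_s(j)-\sigma_s(k)|\lesssim\langle k-j\rangle^s$ to reduce to estimating $\bigl(\langle\cdot\rangle^s|\hat a|\bigr)*|\hat f|$ in $\ell^2$, which is controlled by $|a|_{H^s}\|\hat f\|_{\ell^1}\lesssim|a|_{H^s}|f|_{H^{s-1}}$, again using $s-1>1$ for the Cauchy--Schwarz summability $\|\langle\cdot\rangle^{1-s}\|_{\ell^2}<\infty$. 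The symmetric \emph{high$-$low} regime $|j|\ge 2|k-j|$ with $|k|\lesssim|j|$ is handled in the same way by splitting $\langle k\rangle^s\lesssim\langle j\rangle^s+\langle k-j\rangle^s$ and distributing accordingly.

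The only place where something might go wrong is the balancing of derivatives in the resonant (high$-$high) piece, where one naively loses one extra derivative on $f$; but since we only need the $H^{s-1}$ norm of $f$ (with the full $s$ derivatives on $a$), the mean value trick above is exactly calibrated. Everything else is bookkeeping: summing the three regime contributions yields the claimed inequality. An alternative, cleaner write-up would use Bony's paraproduct decomposition $af=T_af+T_fa+R(a,f)$ together with the standard paraproduct commutator $[\langle\nabla'\rangle^s,T_a]$ estimate, but the Fourier-splitting route above is self-contained and avoids introducing additional machinery.
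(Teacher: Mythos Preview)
Your Fourier-space argument is correct and is the standard route to Kato--Ponce type commutator bounds. One minor point of bookkeeping: your three regimes as written do not quite cover all of $\Z^2\times\Z^2$. The ``high--low'' region $|j|\ge 2|k-j|$ is already contained in your ``high--high'' region $|k-j|\le\tfrac12\max(|j|,|k|)$, while the diagonal case $|j|\sim|k-j|$ with $|k-j|>\tfrac12\max(|j|,|k|)$ falls through the cracks. This is harmless, since in that diagonal region $\langle j\rangle\sim\langle k-j\rangle$ and $\langle k\rangle\lesssim\langle k-j\rangle$, so the crude bound $|\sigma_s(j)-\sigma_s(k)|\lesssim\langle k-j\rangle^s$ again applies and the low--high estimate goes through verbatim. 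In a clean write-up you really only need two cases: $|k-j|\le\tfrac12|j|$ (mean-value gain) and $|k-j|>\tfrac12|j|$ (crude bound, all derivatives on $a$).

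As for comparison with the paper: the paper does \emph{not} actually supply a proof of this lemma. It is stated both in Section~4 (as Lemma~\ref{lem:com-f-ds}) and restated in the Appendix (as Lemma~\ref{lem-com-f-ds}), but no argument is given in either place; the authors evidently regard it as a known product/commutator estimate. Your self-contained Fourier argument therefore fills a genuine gap in the exposition. The alternative you mention---Bony paraproducts plus the standard $[\langle\nabla'\rangle^s,T_a]$ commutator bound---would give the same result with the same threshold $s>2$ (needed for $H^s(\bbT^2)\hookrightarrow W^{1,\infty}$ and $H^{s-1}(\bbT^2)\hookrightarrow L^\infty$), and is what one typically cites in the literature; either presentation is acceptable here.
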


	\begin{corollary}\label{cor-com-dt-ds}
		For $s>2$, we have
		\begin{align*}
			\big|[D_t,\langle\nabla'\rangle^s]f\big|_{L^2}\le C\|\vu\|_{H^{s+1/2}(\Omega_f)}|f|_{H^{s}}.
		\end{align*}	
	\end{corollary}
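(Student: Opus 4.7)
The plan is to reduce the corollary to Lemma \ref{lem-com-f-ds} by peeling off the time derivative and using the trace theorem.

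First, since $\nna^s$ acts only on the spatial variables $(x_1,x_2)$, it commutes with $\pa_t$, so
\[
[D_t,\nna^s]f=[\uline{u_j}\pa_j',\nna^s]f.
\]
Next, since $\nna^s$ commutes with each $\pa_j'$ (both are Fourier multipliers on $\bbT^2$), I can move the derivative through:
\[
[\uline{u_j}\pa_j',\nna^s]f=\uline{u_j}\nna^s\pa_j'f-\nna^s(\uline{u_j}\pa_j'f)=[\uline{u_j},\nna^s]\pa_j'f.
\]
Applying Lemma \ref{lem-com-f-ds} with $a=\uline{u_j}$ (and using $s>2$) then gives
\[
\big|[\uline{u_j},\nna^s]\pa_j'f\big|_{L^2}\le C|\uline{u_j}|_{H^s}|\pa_j'f|_{H^{s-1}}\le C|\uline{u_j}|_{H^s}|f|_{H^s}.
\]

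Finally, I would invoke the standard trace inequality on $\Omega_f$, namely $|\uline{u_j}|_{H^s(\bbT^2)}\le C\|u_j\|_{H^{s+1/2}(\Omega_f)}$, where the constant depends on $|f|_{W^{1,\infty}}$ but is uniform under the standing regularity assumptions (this is implicit in Lemma \ref{lem:basic} and the harmonic-coordinate setup of Section~2). Summing over $j=1,2$ yields
\[
\big|[D_t,\nna^s]f\big|_{L^2}\le C\|\vu\|_{H^{s+1/2}(\Omega_f)}|f|_{H^s},
\]
which is the desired bound. There is no real obstacle here; the only point of care is verifying that the trace estimate is available in the precise form we need, but this is standard given the regularity of the free boundary $\Gamma_f$.
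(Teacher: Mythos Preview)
Your proof is correct and follows exactly the intended route: the paper states this as a corollary of Lemma~\ref{lem-com-f-ds} without giving an explicit proof, and the reduction you carry out---peeling off $\pa_t$, commuting $\nna^s$ with $\pa_j'$, applying Lemma~\ref{lem-com-f-ds} with $a=\uline{u_j}$, and closing with the trace inequality $|\uline{u_j}|_{H^s}\le C\|u_j\|_{H^{s+1/2}(\Omega_f)}$---is precisely what the label ``Corollary'' signals.
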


\begin{lemma}\label{lem-basic-n}
Let $D_t=\pa_t+\uline{u_1}\pa_1'+\uline{u_2}\pa_2'$, then
	\begin{align*}
		D_t\vN_f=&\frac{-\pa_1'\underline{\vu}\cdot\vN_f-(\pa_2'f)^2\pa_1'\underline{\vu}\cdot\vN_f+\pa_1'f\pa_2'f\pa_2'\uline{\vu}\cdot\vN_f}{1+(\pa_1'f)^2+(\pa_2'f)^2}\tau_1\\
		&+\frac{-\pa_2'\underline{\vu}\cdot\vN_f-(\pa_1'f)^2\pa_2'\underline{\vu}\cdot\vN_f+\pa_1'f\pa_2'f\pa_1'\uline{\vu}\cdot\vN_f}{1+(\pa_1'f)^2+(\pa_2'f)^2}\tau_2\\
		&+\frac{\pa_1'f\pa_1'\underline{\vu}\cdot\vN_f+\pa_2'f\pa_2'\underline{\vu}\cdot\vN_f}{1+(\pa_1'f)^2+(\pa_2'f)^2}\vN_f,
	\end{align*}
	where
	\begin{align*}
		\vN_f=(-\pa_1'f,-\pa_2'f,1),\quad\tau_1=(1,0,\pa_1'f),\quad\tau_2=(0,1,\pa_2'f).		
	\end{align*}
\end{lemma}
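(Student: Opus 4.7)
The plan is to reduce the identity to two ingredients: a kinematic simplification of $D_t\partial_i'f$ coming from the boundary condition $\underline{\vu}\cdot\vN_f=\partial_tf$, and a purely algebraic decomposition of the resulting tangent vector in the (non-orthogonal) basis $\{\tau_1,\tau_2,\vN_f\}$ of $\mathbb{R}^3$.

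First I would observe that, because $D_t$ acts only in the variables $(t,x_1,x_2)$ on the interface and $\vN_f=(-\partial_1'f,-\partial_2'f,1)$, one has the trivial identity
\begin{equation*}
D_t\vN_f=\bigl(-D_t\partial_1'f,\;-D_t\partial_2'f,\;0\bigr).
\end{equation*}
Next I would rewrite $D_t\partial_i'f$ using the commutator rule $[D_t,\partial_i']g=-\partial_i'\underline{u_j}\partial_j'g$ recorded earlier in the paper together with the boundary relation $D_tf=\underline{u_3}$ (which is the kinematic condition $\underline{\vu}\cdot\vN_f=\partial_tf$ written in material form). This gives
\begin{equation*}
D_t\partial_i'f=\partial_i'D_tf+[D_t,\partial_i']f=\partial_i'\underline{u_3}-\partial_i'\underline{u_j}\partial_j'f=\partial_i'\underline{\vu}\cdot\vN_f,
\end{equation*}
so that $D_t\vN_f=(-\partial_1'\underline{\vu}\cdot\vN_f,\,-\partial_2'\underline{\vu}\cdot\vN_f,\,0)$.

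The remaining step is to expand this vector as $D_t\vN_f=A\tau_1+B\tau_2+C\vN_f$. I would take inner products with the three basis vectors and use the elementary identities $\tau_i\cdot\vN_f=0$, $|\tau_1|^2=1+(\partial_1'f)^2$, $|\tau_2|^2=1+(\partial_2'f)^2$, $\tau_1\cdot\tau_2=\partial_1'f\partial_2'f$, and $|\vN_f|^2=1+(\partial_1'f)^2+(\partial_2'f)^2$. Taking the $\vN_f$-inner product immediately gives $C|\vN_f|^2=\partial_1'f\,\partial_1'\underline{\vu}\cdot\vN_f+\partial_2'f\,\partial_2'\underline{\vu}\cdot\vN_f$, which is the stated $\vN_f$-component. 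Taking the $\tau_i$-inner products yields a $2\times 2$ system for $(A,B)$ whose Gram determinant is
\begin{equation*}
|\tau_1|^2|\tau_2|^2-(\tau_1\cdot\tau_2)^2=1+(\partial_1'f)^2+(\partial_2'f)^2=|\vN_f|^2,
\end{equation*}
so Cramer's rule produces exactly the numerators displayed in the statement.

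There is no real obstacle here: the only slightly delicate point is keeping track of signs in the Cramer inversion and recognizing that the determinant of the Gram matrix of $\{\tau_1,\tau_2\}$ equals $|\vN_f|^2$, which lets the single denominator $1+(\partial_1'f)^2+(\partial_2'f)^2$ appear uniformly in all three components. Everything else is a direct algebraic substitution.
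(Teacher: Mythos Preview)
Your proposal is correct and follows essentially the same route as the paper: both arguments compute $D_t\vN_f\cdot\tau_i=-\partial_i'\underline{\vu}\cdot\vN_f$ (you make the kinematic identity $D_t\partial_i'f=\partial_i'\underline{\vu}\cdot\vN_f$ explicit, the paper leaves it as ``direct calculation''), read off the normal component from $D_t\vN_f\cdot\vN_f$, and then invert the same $2\times 2$ Gram system with determinant $|\vN_f|^2$ to obtain $A,B$. There is no substantive difference in method.
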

\begin{proof}
Let $\vn_f=\frac{\vN_f}{|\vN_f|}$ and write $D_t\vN_f=A\tau_1+B\tau_2+(D_t\vN_f\cdot\vn_f)\vn_f$, where $A$, $B$ are undetermined coefficients. Direct calculation shows that
	\begin{align*}
	 	D_t\vN_f\cdot\vn_f=\frac{\pa_1'f\pa_1'\uline\vu\cdot\vN_f+\pa_2'f\pa_2'\uline\vu\cdot\vN_f}{\sqrt{1+(\pa_1'f)^2+(\pa_2'f)^2}}.
	\end{align*}
	Thanks to  $\vN_f\cdot\tau_1=\vN_f\cdot\tau_2=0$, it follows that
	\begin{align*}
		A(1+\pa_1'f^2)+B(\pa_1'f\pa_2'f)=D_t\vN_f\cdot\tau_1=-\pa_1'\uline\vu\cdot\vN_f.
	\end{align*}
	Similarly, we have
	\begin{align*}
		A(\pa_1'f\pa_2'f)+B(1+\pa_2'f^2)=D_t\vN_f\cdot\tau_2=-\pa_2'\uline\vu\cdot\vN_f.
	\end{align*}
From above equations, we can solve
	\begin{align*}
		A&=\frac{\big(-\pa_1'\underline{\vu}\cdot\vN_f-(\pa_2'f)^2\pa_1'\underline{\vu}\cdot\vN_f+\pa_1'f\pa_2'f\pa_2'\uline{\vu}\cdot\vN_f\big)}{1+(\pa_1'f)^2+(\pa_2'f)^2},\\
		B&=\frac{\big(-\pa_2'\underline{\vu}\cdot\vN_f-(\pa_1'f)^2\pa_2'\underline{\vu}\cdot\vN_f+\pa_1'f\pa_2'f\pa_1'\uline{\vu}\cdot\vN_f\big)}{1+(\pa_1'f)^2+(\pa_2'f)^2}.
	\end{align*}
\end{proof}

\begin{lemma}\label{lem-com-t-DN}
	For any function $g\in H^{s+1}(\bbT^2)$ with $s\ge \frac{3}{2}$, it holds that
	\begin{align*}
		|[\bar{\mathcal N}_f,D_t]g|_{H^{s}}\lesssim |f|^3_{H^{s+1}}||\vu||_{H^{s+3/2}(\Omega_f)}|g|_{H^{s+1}}.
	\end{align*}
\end{lemma}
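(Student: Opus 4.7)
The plan is to realize $[\bar{\mathcal N}_f, D_t]g$ as (essentially) the normal trace on $\Gamma_f$ of an explicit auxiliary function that solves an inhomogeneous elliptic problem with zero Dirichlet data on $\Gamma_f$, and then apply interior elliptic regularity. First, introduce the two harmonic extensions $\bar h\eqdefa\bar{\mathcal H}_f g$ and $\bar h_t\eqdefa\bar{\mathcal H}_f(D_t g)$. Choose an extension $\tilde{\vu}$ of $\underline{\vu}$ into $\Omega_f$ — concretely, the componentwise harmonic extension with $\tilde u_3|_{\Gamma^-}=0$ — so that $\tilde{\vu}|_{\Gamma_f}=\underline{\vu}$ and $\Delta\tilde{\vu}=0$. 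Then define the bulk material derivative $\mathcal D_t\eqdefa\partial_t+\tilde{\vu}\cdot\nabla$ and the error $w\eqdefa\mathcal D_t\bar h-\bar h_t$. Because $\tilde{\vu}|_{\Gamma_f}=\underline{\vu}$ and $\underline{\vu}\cdot\vN_f=\partial_tf$, a chain-rule computation gives $\underline{\mathcal D_t\bar h}=D_t\underline{\bar h}=D_tg$, so $w|_{\Gamma_f}=0$.

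Next, unwinding $D_t(\vN_f\cdot\underline{\nabla\bar h})$ in the coordinates $(\partial_t,\partial_1',\partial_2')$ and comparing with $\vN_f\cdot\underline{\nabla\mathcal D_t\bar h}$ yields the identity
\begin{align*}
  [\bar{\mathcal N}_f,D_t]g=-\vN_f\cdot\underline{\nabla w}+\vN_f\cdot\underline{(\nabla\tilde u_j)\partial_j\bar h}-(D_t\vN_f)\cdot\underline{\nabla\bar h}.
\end{align*}
The last two terms are pure boundary expressions and I would estimate them by the trace theorem, the Sobolev product rule on $\bbT^2$, Lemma A.2 for $D_t\vN_f$, and the elliptic bound $\|\bar h\|_{H^{s+3/2}(\Omega_f)}\le C(|f|_{H^{s+1}})|g|_{H^{s+1}}$; each $\vN_f$ and each $D_t\vN_f$ contributes a factor of $|f|_{H^{s+1}}$, while $\nabla\tilde{\vu}$ at $\Gamma_f$ brings one factor of $\|\vu\|_{H^{s+3/2}(\Omega_f)}$, accounting for the advertised bound with the required powers.

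For the main term $\vN_f\cdot\underline{\nabla w}$ I would apply the standard elliptic estimate on $\Omega_f$ with Dirichlet data on $\Gamma_f$ and Neumann data on $\Gamma^-$:
\begin{align*}
  \|w\|_{H^{s+3/2}(\Omega_f)}\le C(|f|_{H^{s+1}})\bigl(\|\Delta w\|_{H^{s-1/2}(\Omega_f)}+|\partial_3w|_{H^s(\Gamma^-)}\bigr).
\end{align*}
Since $\tilde{\vu}$ is harmonic, $\Delta w=[\Delta,\mathcal D_t]\bar h=2\nabla\tilde{\vu}:\nabla^2\bar h$, and the product estimate controls this in $H^{s-1/2}$ by $\|\tilde{\vu}\|_{H^{s+3/2}}\|\bar h\|_{H^{s+3/2}}$. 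For the bottom trace, the identities $\partial_3\bar h|_{\Gamma^-}=\partial_3\bar h_t|_{\Gamma^-}=0$ force $\partial_t\partial_3\bar h$ and all tangential derivatives of $\partial_3\bar h$ to vanish on $\Gamma^-$, so $\partial_3w|_{\Gamma^-}$ reduces to $(\partial_3\tilde u_j)\partial_j\bar h|_{\Gamma^-}$, which is controlled by the same norms. Taking the trace on $\Gamma_f$ and combining with the two boundary terms finishes the estimate.

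The main obstacle I anticipate is the careful bookkeeping of the factor $|f|^3_{H^{s+1}}$: one power comes from the elliptic constant $C(|f|_{H^{s+1}})$ for $w$, one from the bulk norm of the harmonic extension $\tilde{\vu}$ (hence from the geometry of $\Omega_f$), and one from either $\vN_f$ or $D_t\vN_f$ that appears multiplicatively. A subsidiary technical point is ensuring that the boundary condition $\tilde u_3|_{\Gamma^-}=0$ is compatible enough to keep $\partial_3w|_{\Gamma^-}$ controllable in $H^s(\Gamma^-)$ without losing derivatives in $\vu$; if needed, one can instead take $\tilde{\vu}$ with homogeneous Neumann condition at $\Gamma^-$ and absorb the small mismatch into a harmless remainder.
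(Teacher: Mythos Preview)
Your approach is correct and structurally the same as the paper's: both compute the commutator by introducing a bulk material derivative, identifying the error $w=\mathcal D_t\bar{\mathcal H}_fg-\bar{\mathcal H}_f(D_tg)$ as the solution of an inhomogeneous elliptic problem with zero Dirichlet data on $\Gamma_f$, and then reading off $[\bar{\mathcal N}_f,D_t]g$ as $\vN_f\cdot\underline{\nabla w}$ plus explicit boundary terms coming from $D_t\vN_f$ (handled via Lemma~\ref{lem-basic-n}) and from $[\nabla,\mathcal D_t]$.

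The only substantive difference is the choice of bulk extension of the transport field. The paper uses the physical velocity $\vu$ itself, which is already defined on $\Omega_f$ and satisfies $u_3|_{\Gamma^-}=0$; this costs an extra $\Delta\vu\cdot\nabla\bar{\mathcal H}_fg$ term in the source for $w$ but avoids any auxiliary construction. You instead build a harmonic extension $\tilde{\vu}$ of the trace $\underline{\vu}$, which kills the $\Delta\tilde{\vu}$ term at the price of introducing $\tilde{\vu}$ and checking that $\|\tilde{\vu}\|_{H^{s+3/2}(\Omega_f)}\lesssim\|\vu\|_{H^{s+3/2}(\Omega_f)}$. Both routes give the same estimate; the paper's is marginally more direct since $\vu$ is already there, while yours yields a slightly shorter source term. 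Your handling of the bottom boundary (forcing $\tilde u_3|_{\Gamma^-}=0$ so that $\partial_3w|_{\Gamma^-}=(\partial_3\tilde u_j)\partial_j\bar h$) exactly parallels the paper's computation that $\partial_3 D_t\bar{\mathcal H}_fg|_{\Gamma^-}=\partial_3u_1\partial_1\bar{\mathcal H}_fg+\partial_3u_2\partial_2\bar{\mathcal H}_fg$, so the concern you flag there is not a real obstacle.
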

\begin{proof}
We start to analyze of $[D_t,\bar{\mathcal H}_f]g$. Direct calculation shows that
\begin{align*}
	\Delta D_t\bar{\mathcal H}_fg&=D_t\Delta\bar{\mathcal H}_fg+2\nabla \vu:\nabla^2\bar{\mathcal H}_fg+\Delta\vu\cdot\nabla\bar{\mathcal H}_fg\\
	&=2\nabla \vu:\nabla^2\bar{\mathcal H}_fg+\Delta \vu\cdot\nabla\bar{\mathcal H}_fg.
\end{align*}
So we have
\begin{align*}
	[D_t,\bar{\mathcal H}_f]g=\bar\Delta^{-1}(2\nabla \vu:\nabla^2\bar{\mathcal H}g+\Delta \vu\cdot\nabla\bar{\mathcal H}_fgg)+\bar{\bar{\mathcal H}}_fg,
\end{align*}
where $\bar\Delta^{-1}$ means
\begin{equation}
  \left\{
  	\begin{array}{ll}
  		\Delta\bar\Delta^{-1}g=g&x\in\Omega_f;\\
  		\bar\Delta^{-1}g=0&x\in\Gamma_f;\\
  		\pa_3\bar\Delta^{-1}g=0&x\in\Gamma^-,
  	\end{array}
  \right.
\end{equation}
and $\bar{\bar{\mathcal H}}_f$ means
\begin{equation}
  \left\{
  	\begin{array}{ll}
  		\Delta\bar{\bar{\mathcal H}}_fg=0&x\in \Omega_f;\\
  		\bar{\bar{\mathcal H}}_fg=0&x\in \Gamma_f;\\
  		\pa_3\bar{\bar{\mathcal H}}_fg=\pa_3u_1\pa_1\bar{\mathcal H}_fg+\pa_3u_2\pa_2\bar{\mathcal H}_fg&x\in\Gamma^-.
  	\end{array}
  \right.
\end{equation}
Next we consider $\bar{\mathcal N}_f$. Using Lemma \ref{lem-basic-n} we  get
\begin{align*}
	[D_t,\bar{\mathcal N}_f]g&=D_t(\vN_f\cdot\uline{\nabla\bar{\mathcal H}_fg})-\vN_f\cdot\uline{\nabla\bar{\mathcal H}_f(D_tg)}\\
	&=\vN_f\cdot\uline{\nabla D_t\bar{\mathcal H}_fg}-\uline{\nabla\bar{\mathcal H}_fg}\cdot(\vN_f\cdot\uline{\nabla\vu})+\uline{\nabla\bar{\mathcal H}_fg}\cdot (D_t\vN_f)-\vN_f\cdot\uline{\nabla\bar{\mathcal H}_f(D_tg)}\\
	&=\vN_f\cdot\uline{\nabla\bar\Delta^{-1}(2\nabla \vu:\nabla^2\bar{\mathcal H}_fg}+\uline{\Delta \vu}\cdot\uline{\nabla\bar{\mathcal H}_fg)}+\vN_f\cdot\uline{\nabla\bar{\bar{\mathcal H}}_fg}-\uline{\nabla\bar{\mathcal H}_fg}\cdot(\vN_f\cdot\uline{\nabla\vu})\\
	&\quad+\frac{\big(-\pa_1'\underline{\vu}\cdot\vN_f-(\pa_2'f)^2\pa_1'\underline{\vu}\cdot\vN_f+\pa_1'f\pa_2'f\pa_2'\uline{\vu}\cdot\vN_f\big)\pa_1'g}{1+(\pa_1'f)^2+(\pa_2'f)^2}\\
	&\quad+\frac{\big(-\pa_2'\underline{\vu}\cdot\vN_f-(\pa_1'f)^2\pa_2'\underline{\vu}\cdot\vN_f+\pa_1'f\pa_2'f\pa_1'\uline{\vu}\cdot\vN_f\big)\pa_2'g}{1+(\pa_1'f)^2+(\pa_2'f)^2}\\
	&\quad+\frac{\pa_1'f\pa_1'\underline{\vu}\cdot\vN_f+\pa_2'f\pa_2'\underline{\vu}\cdot\vN_f}{1+(\pa_1'f)^2+(\pa_2'f)^2}\bar{\mathcal N}_fg.
\end{align*}
As a conclusion, it is easy for us to get
\begin{align*}
	|[\bar{\mathcal N}_f,D_t]g|_{H^{s}}\lesssim |f|^3_{H^{s+1}}||\vu||_{H^{s+3/2}(\Omega_f)}|g|_{H^{s+1}}.
\end{align*}
\end{proof}

\begin{lemma}\label{lem-com-f-DN}
	For any functions $a\in H^{3/2}(\bbT^2)$, $g\in H^{1/2}(\bbT^2)$, it holds that
	\begin{align*}
		|[\cN_f,a]g|_{L^2}\lesssim |f|_{H^3}|a|_{H^{3/2}}|g|_{H^{1/2}}.	
	\end{align*}
\end{lemma}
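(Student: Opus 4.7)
The plan is to reduce the commutator to an interior integral involving $\nabla\tilde a$ and $\nabla\cH_fg$ via the harmonic-extension structure. Let $\tilde a\eqdefa\cH_f a$ be the harmonic extension of $a$ (with zero Dirichlet data on $\Gamma^-$) and set $w\eqdefa\cH_f(ag)-\tilde a\cH_f g$. Since $\tilde a|_{\Gamma_f}=a$ and $\cH_f g|_{\Gamma^-}=0$, we have $w=0$ on both $\Gamma_f$ and $\Gamma^-$, and since $\tilde a$ is harmonic,
\begin{align*}
\Delta w=-\Delta(\tilde a\cH_f g)=-2\nabla\tilde a\cdot\nabla\cH_f g.
\end{align*}
Expanding $\vN_f\cdot\uline{\nabla\cH_f(ag)}$ by the product rule on $\Gamma_f$ and using $\vN_f\cdot\uline{\nabla\tilde a}=\cN_f a$ yields the key identity
\begin{align*}
[\cN_f,a]g=\vN_f\cdot\uline{\nabla w}+g\,\cN_f a.
\end{align*}

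For the lower-order piece $g\,\cN_f a$, I invoke the 2D Sobolev embedding $H^{1/2}(\bbT^2)\hookrightarrow L^4(\bbT^2)$ together with the standard mapping bound $|\cN_f a|_{H^{1/2}}\lesssim|a|_{H^{3/2}}$ (with constant controlled by $|f|_{H^3}$). Hölder then gives $|g\,\cN_f a|_{L^2}\le|g|_{L^4}|\cN_f a|_{L^4}\lesssim|g|_{H^{1/2}}|a|_{H^{3/2}}$, which is of the desired form.

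For the principal piece $\vN_f\cdot\uline{\nabla w}$, I argue by duality. For $\phi\in C^\infty(\bbT^2)$ with $|\phi|_{L^2}=1$, I use that $w$ vanishes on the whole boundary and that $\cH_f\phi$ is harmonic with $\cH_f\phi|_{\Gamma^-}=0$; two integrations by parts (or Green's identity) give
\begin{align*}
\int_{\bbT^2}\phi\,\vN_f\cdot\uline{\nabla w}\,dx'=\int_{\Omega_f}(\cH_f\phi)\Delta w\,dx=-2\int_{\Omega_f}(\cH_f\phi)\,\nabla\tilde a\cdot\nabla\cH_f g\,dx.
\end{align*}
Applying Hölder in $\Omega_f$ with exponents $(3,6,2)$ and using the 3D Sobolev embeddings $H^{1/2}(\Omega_f)\hookrightarrow L^3$, $H^1(\Omega_f)\hookrightarrow L^6$, together with the harmonic-extension estimates $\|\cH_f\phi\|_{H^{1/2}(\Omega_f)}\lesssim|\phi|_{L^2}$, $\|\tilde a\|_{H^2(\Omega_f)}\lesssim|a|_{H^{3/2}}$, and $\|\nabla\cH_f g\|_{L^2(\Omega_f)}\lesssim|g|_{H^{1/2}}$, I obtain the bound $\lesssim|f|_{H^3}|a|_{H^{3/2}}|g|_{H^{1/2}}$ uniformly in $\phi$. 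Taking the supremum over $\phi$ then produces the $L^2$-bound on $\vN_f\cdot\uline{\nabla w}$, and combining with the previous paragraph completes the proof.

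The main obstacle is tracking the dependence on $|f|$ in the harmonic-extension constants: the threshold $|f|_{H^3}$ is exactly what is needed to ensure $\tilde a\in H^2(\Omega_f)$ (so that $\nabla\tilde a\in L^6$ via the 3D embedding) and to make the product-rule manipulation $\Delta(\tilde a\cH_f g)=2\nabla\tilde a\cdot\nabla\cH_f g$ justified in $L^2(\Omega_f)$. All other ingredients are standard Sobolev embeddings and the mapping properties of $\cH_f$ and $\cN_f$ already recalled in Lemmas~\ref{lem:basic} and~\ref{lem:DN}.
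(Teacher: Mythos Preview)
Your proof is correct and follows essentially the same approach as the paper. Your function $w=\cH_f(ag)-\tilde a\,\cH_fg$ is precisely $-2\Delta^{-1}(\nabla\cH_fa\cdot\nabla\cH_fg)$ (zero Dirichlet data on both boundaries), so your decomposition $[\cN_f,a]g=\vN_f\cdot\uline{\nabla w}+g\,\cN_fa$ coincides with the paper's identity, and both estimate the two pieces identically: $L^4$--$L^4$ H\"older plus $H^{1/2}\hookrightarrow L^4$ for $g\,\cN_fa$, and a $(3,6,2)$ interior H\"older inequality for the principal term. The only cosmetic difference is that the paper bounds $|\vN_f\cdot\uline{\nabla\Delta^{-1}(\cdots)}|_{L^2}$ via the elliptic estimate $\|\Delta^{-1}h\|_{H^{3/2}}\lesssim\|h\|_{H^{-1/2}}$ and then tests $h$ against $\phi\in H^{1/2}(\Omega_f)$, whereas you test directly against $\phi\in L^2(\bbT^2)$ on the boundary and pass to the interior by Green's formula with the harmonic extension $\cH_f\phi$; since $\|\cH_f\phi\|_{H^{1/2}(\Omega_f)}\lesssim|\phi|_{L^2}$, these two dualities are equivalent.
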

\begin{proof}
  Similar to Lemma \ref{lem-com-t-DN}, we have that
  \begin{align*}
    [\cN_f,a]g=g\cN_fa-2\vN_f\cdot\uline{\nabla\Delta^{-1}(\nabla \cH_fa\cdot\nabla\cH_fg)}.
  \end{align*}
It holds that
\begin{align*}
  |g\cN_fa|_{L^2}\le|g|_{L^4}^{1/2}|\cN_fa|_{L^4}^{1/2}\lesssim|g|_{H^{1/2}}^{1/2}|\cN_fa|_{H^{1/2}}^{1/2}.
\end{align*}
On the other hand, one has
\begin{align*}
  |\vN_f\cdot\uline{\nabla\Delta^{-1}(\nabla \cH_fa\cdot\nabla\cH_fg)}|_{L^2}\lesssim |f|_{H^{5/2}}||\nabla \cH_fa\cdot\nabla\cH_fg||_{H^{-1/2}(\Om_f)}.
\end{align*}
For each test function $\phi$ with $||\phi||_{H^{1/2}(\Om_f)}=1$, we have
\begin{align*}
  &\int_{\Omega_f}\nabla \cH_fa\cdot\nabla\cH_fg\phi dx\\
  \le&||\phi||_{L^3(\Om_f)}||\nabla\cH_fg||_{L^2(\Om_f)}||\nabla \cH_fa||_{L^6(\Om_f)}\\
  \lesssim&||\phi||_{H^{1/2}(\Om_f)}|g|_{H^{1/2}}|a|_{H^{3/2}},
\end{align*}
then the conclusion follows easily.
\end{proof}

\begin{lemma}\label{lem-DN-b}
	For any functions $a \in H^{3/2}(\bbT^2)$, $g\in H^{1/2}(\bbT^2)$, it holds that
	\begin{align*}
		\int_{\bbT^2} a(\cN_fg)gdx'\lesssim |f|_{H^3}|a|_{H^{3/2}}|g|^2_{H^{1/2}}.
	\end{align*}
\end{lemma}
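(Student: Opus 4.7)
The plan is to prove this by converting the surface integral into a volume integral via integration by parts, then estimating the resulting bulk terms with Sobolev embeddings on $\Omega_f$. Specifically, I will harmonically extend $a$ to $\cH_f a$, using the same Dirichlet extension used to define $\cN_f$, so that $\cH_f a$ vanishes on $\Gamma^-$. Since $\cH_f g$ also vanishes on $\Gamma^-$, the product $\cH_f a \cdot \cH_f g \cdot \nabla \cH_f g$ vanishes on the lower boundary, which means no boundary contribution arises from $\Gamma^-$ in the divergence theorem.

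Concretely, I will write
\begin{align*}
\int_{\bbT^2} a (\cN_f g) g\,dx' = \int_{\bbT^2} a\, g\, \vN_f\cdot \nabla \cH_f g\,dx' = \int_{\Omega_f}\div\bigl(\cH_f a\cdot \cH_f g\cdot \nabla \cH_f g\bigr)\,dx,
\end{align*}
and then expand the divergence. Using $\Delta \cH_f g = 0$, this reduces to the two terms
\begin{align*}
\int_{\Omega_f} \cH_f a\,|\nabla \cH_f g|^2\,dx + \int_{\Omega_f} (\nabla\cH_f a\cdot \nabla \cH_f g)\,\cH_f g\,dx.
\end{align*}

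For the first term I will use $\|\cH_f a\|_{L^\infty(\Omega_f)}\lesssim \|\cH_f a\|_{H^2(\Omega_f)}\le C(|f|_{H^3})|a|_{H^{3/2}}$ (Sobolev embedding in $\R^3$ combined with Lemma \ref{lem:basic}), together with $\|\nabla \cH_f g\|_{L^2(\Omega_f)}\lesssim |g|_{H^{1/2}}$ from Lemma \ref{lem:DN}. For the second term I will use H\"older in the form $\|\nabla \cH_f a\|_{L^4}\,\|\cH_f g\|_{L^4}\,\|\nabla \cH_f g\|_{L^2}$, with the 3D Sobolev embedding $H^{3/4}\hookrightarrow L^4$ applied to $\cH_f g$ and $\nabla \cH_f a$ (both bounded via Lemma \ref{lem:basic} by $|g|_{H^{1/2}}$ and $|a|_{H^{3/2}}$ respectively, with a constant depending on $|f|_{H^3}$).

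The only mildly delicate point will be confirming that the chosen regularity budget closes: I want both $\|\cH_f a\|_{L^\infty}$ and $\|\nabla \cH_f a\|_{L^4}$ controlled only by $|a|_{H^{3/2}}$, which is why I extend $a$ to $H^2(\Omega_f)$ and apply standard 3D Sobolev embeddings. The dependence on $|f|_{H^3}$ enters exclusively through the harmonic-extension bounds of Lemma \ref{lem:basic}; no commutator or Dirichlet--Neumann manipulation is needed beyond what is already stated. Summing the two estimates gives the claimed inequality.
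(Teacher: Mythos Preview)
Your proof is correct and follows essentially the same route as the paper: both convert the surface pairing into the bulk identity
\[
\int_{\bbT^2} a(\cN_f g)g\,dx'=\int_{\Omega_f}\cH_f a\,|\nabla\cH_f g|^2\,dx+\int_{\Omega_f}(\nabla\cH_f a\cdot\nabla\cH_f g)\,\cH_f g\,dx
\]
via the divergence theorem and $\Delta\cH_f g=0$. The only minor difference is in the cross term: the paper integrates by parts once more (using $\Delta\cH_f a=0$) to rewrite it as $\tfrac12\int_{\bbT^2}(\cN_f a)\,g^2\,dx'$ and then estimates on the surface with $|\cN_f a|_{L^2}|g|_{L^4}^2$, whereas you estimate it directly in the bulk by $\|\nabla\cH_f a\|_{L^4}\|\cH_f g\|_{L^4}\|\nabla\cH_f g\|_{L^2}$; both close with the same regularity budget.
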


\begin{proof}
We use $\tilde a=\cH_fa$ to denote the harmonic extension of $a$. Then it holds that
\begin{align*}
	\int_{\bbT^2}a(\cN_fg)gdx'
	=&\int_{\Omega_f}(\nabla \cH_fg)\cdot\nabla(\tilde a\cH_fg)dx\\
	=&\int_{\Omega_f}(\nabla \cH_fg)\cdot(\nabla \tilde a\cH_fg+\tilde a\nabla \cH_fg)dx.
\end{align*}
Using integration by parts, we know that
\begin{align*}
	\int_{\Omega_f}\nabla \cH_fg\cdot(\nabla \tilde a\cH_fg)dx
	=&\int_{\bbT^2}\vN_f\cdot\uline{\nabla \tilde a} ggdx'-\int_{\Omega_f}\cH_fg\nabla\tilde a\cdot\nabla\cH_fgdx.
\end{align*}
Thus we have
\begin{align*}
	\int_{\bbT^2}a\cN_fggdx'
	=&\int_{\Omega_f}\tilde a(\nabla \cH_fg)^2dx+\frac{1}{2}\int_{\bbT^2}\cN_f a ggdx'\\
	\lesssim&|a|_{L^\infty}|g|^2_{H^{1/2}}+|f|_{H^3}|a|_{H^1}|g|^2_{L^4}\\
	\lesssim&|f|_{H^3}|a|_{H^{3/2}}|g|^2_{H^{1/2}}.
\end{align*}
\end{proof}
\begin{lemma}\label{lem-com-t-DN-2}
For any function $a\in H^{5/2}(\bbT^2)$, $g\in H^{1/2}(\bbT^2)$, it holds that
	\begin{align*}
		\int_{\bbT^2}a([D_t,\cN_f]g)gdx'\le C |a|_{H^{5/2}}|f|^2_{H^4}||\vu||_{H^{4}(\Omega_f)}|g|^2_{H^{1/2}}.
	\end{align*}
	
\end{lemma}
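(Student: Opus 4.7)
My plan is to derive an explicit representation for $[D_t,\cN_f]g$ analogous to the formula in Lemma \ref{lem-com-t-DN}, and then estimate the resulting pieces after pairing with $a$ and $g$. Since $\cH_f g$ vanishes on $\Gamma^-$ and $u_3|_{\Gamma^-}=0$, the function $[D_t,\cH_f]g$ satisfies a Dirichlet problem with zero boundary data on both $\Gamma_f$ (via $\uline{D_tv}=D_t\uline v$) and $\Gamma^-$; its source, after using $\Delta\cH_f g=0$ and $\div\vu=0$, reduces to $\Delta(\vu\cdot\nabla\cH_f g)$. Because $\vu\cdot\nabla\cH_f g$ itself vanishes on $\Gamma^-$, uniqueness of the Dirichlet problem yields the explicit formula
\begin{align*}
  [D_t,\cH_f]g=\vu\cdot\nabla\cH_f g-\cH_f(\uline{\vu\cdot\nabla\cH_f g}),
\end{align*}
which, combined with Lemma \ref{lem-basic-n} and $[D_t,\pa_i]=-(\pa_i u_k)\pa_k$, gives
\begin{align*}
  [D_t,\cN_f]g=D_t\vN_f\cdot\uline{\nabla\cH_f g}-\vN_f\cdot(\uline{\nabla\vu})^\top\uline{\nabla\cH_f g}+\vN_f\cdot\uline{\nabla(\vu\cdot\nabla\cH_f g)}-\cN_f(\uline{\vu\cdot\nabla\cH_f g}).
\end{align*}

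Substituting this into $\int_{\bbT^2}a([D_t,\cN_f]g)g\,dx'$, the first two terms are pointwise products on $\Gamma_f$ handled by Cauchy--Schwarz, the trace bound $|\uline{\nabla\cH_f g}|_{H^{-1/2}}\lesssim|g|_{H^{1/2}}$, and $H^{5/2}(\bbT^2)\hookrightarrow W^{1,\infty}(\bbT^2)$. For the $\cN_f$-piece, self-adjointness of $\cN_f$ together with Lemma \ref{lem-com-f-DN} allows one to move $\cN_f$ onto $ag$ modulo a controllable commutator, reducing the estimate to a symmetric $H^{1/2}$--$H^{-1/2}$ pairing. The principal term $\int_{\bbT^2}ag\,\vN_f\cdot\uline{\nabla(\vu\cdot\nabla\cH_f g)}\,dx'$ I would reshape via Green's identity with the harmonic extension $\tilde a=\cH_f a$ into
\begin{align*}
  \int_{\Om_f}\nabla(\tilde a\cH_f g)\cdot\nabla(\vu\cdot\nabla\cH_f g)\,dx+\int_{\Om_f}\tilde a\cH_f g\,\Delta(\vu\cdot\nabla\cH_f g)\,dx,
\end{align*}
the $\Gamma^-$ boundary contribution vanishing because $\cH_f g=0$ there. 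Expanding $\nabla(\vu\cdot\nabla\cH_f g)=(\nabla\vu)^\top\nabla\cH_f g+(\vu\cdot\nabla)\nabla\cH_f g$, the unpleasant $(\vu\cdot\nabla)\nabla\cH_f g$ piece is integrated by parts in $x_k$, using $\div\vu=0$ to transfer one derivative onto $\tilde a\cH_f g$; the boundary term on $\Gamma_f$ so produced is absorbed into the lower-order estimates above, while $\Gamma^-$ again contributes nothing. After this symmetrization every occurrence of $\cH_f g$ carries at most one spatial derivative in $L^2(\Om_f)$, and the combination $\|\nabla\tilde a\|_{L^\infty(\Om_f)}\lesssim|a|_{H^{5/2}}$ (by elliptic regularity for $\cH_f$ in the moving domain), $\|\vu\|_{W^{1,\infty}(\Om_f)}\lesssim\|\vu\|_{H^4(\Om_f)}$, and $\|\nabla\cH_f g\|_{L^2(\Om_f)}^2\lesssim|g|_{H^{1/2}}^2$ delivers the claim, with the two factors of $|f|_{H^4}$ collecting the explicit $\vN_f$'s (and $D_t\vN_f$ via Lemma \ref{lem-basic-n}) together with the moving-domain dependence of $\cH_f$.

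The main obstacle is precisely this last symmetrization: the principal term naively carries two derivatives of $\cH_f g$, whereas only one derivative of $g$ sits in $L^2$. The resolution is to exploit the double occurrence of $g$ and the divergence-free condition on $\vu$ to redistribute one derivative via integration by parts, so that every resulting interior integral is bilinear in $\nabla\cH_f g$; simultaneously one must keep careful track of where factors of $\vN_f$ and harmonic-extension constants appear to account for the two copies of $|f|_{H^4}$ in the final bound.
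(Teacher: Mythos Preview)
Your representation $[D_t,\cH_f]g=\vu\cdot\nabla\cH_f g-\cH_f(\uline{\vu\cdot\nabla\cH_f g})$ is correct and in fact equivalent to the paper's $\Delta^{-1}$ formulation, since $w-\cH_f\uline w=\Delta^{-1}(\Delta w)$ for $w=\vu\cdot\nabla\cH_f g$. The overall architecture---Green's identity plus integration by parts exploiting $\div\vu=0$---is also the paper's strategy.

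The gap is in your separate treatment of the principal term and the $\cN_f$ piece. When you apply Green to $\int ag\,\vN_f\cdot\uline{\nabla w}\,dx'$ alone and then integrate the $(\vu\cdot\nabla)\nabla\cH_f g$ contribution by parts, the $\Gamma_f$ boundary term contains
\[
\tfrac12\int_{\bbT^2} a\,(\vu\cdot\vN_f)\,|\uline{\nabla\cH_f g}|^2\,dx',
\]
which is \emph{not} controlled by $|g|_{H^{1/2}}^2$ (the trace $\uline{\nabla\cH_f g}$ lies only in $H^{-1/2}$, so its square is not integrable against a smooth weight). This is not a lower-order term that can be ``absorbed''. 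Likewise, your handling of the $\cN_f$ piece is incomplete: moving $\cN_f$ onto $ag$ yields $-\int a(\cN_f g)\,\uline w\,dx'$, and since $\uline w$ contains a $\frac{\pa_t f}{|\vN_f|^2}\cN_f g$ component, you are left with a term of the type $\int(\text{smooth})(\cN_f g)^2\,dx'$, again not bounded by $|g|_{H^{1/2}}^2$.

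These two bad contributions in fact cancel each other (together with further cancellations against the $D_t\vN_f$ and $(\uline{\nabla\vu})^\top$ terms), but this must be exhibited explicitly. The paper does this by keeping the principal term and the $\cN_f$ piece combined as $\vN_f\cdot\uline{\nabla\Delta^{-1}(\Delta w)}$; Green's identity then produces $\int_{\Omega_f}\nabla\Delta^{-1}(\Delta w)\cdot\nabla(\tilde a\cH_f g)\,dx$, which is directly controllable since $\|\nabla\Delta^{-1}(\Delta w)\|_{L^2}\lesssim\|\Delta w\|_{H^{-1}}$, plus $\int_{\Omega_f}\tilde a\cH_f g\,\Delta w\,dx$. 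The latter still requires integration by parts, but now the boundary terms so produced match \emph{exactly} the remaining surface terms from $D_t\vN_f$ and $-\vN_f\cdot(\uline{\nabla\vu})^\top\uline{\nabla\cH_f g}$ (via the decomposition in Lemma~\ref{lem-basic-n}), leaving only a single $\int b(\cN_f g)g\,dx'$ term of Lemma~\ref{lem-DN-b} type. You need either to combine the two pieces before applying Green, or to track the boundary cancellations explicitly; as written, the argument does not close.
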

\begin{proof}
	Similar to Lemma \ref{lem-com-t-DN}, we have
\begin{align*}
	[D_t,\cN_f]g=&\vN_f\cdot\uline{\nabla\Delta^{-1}(2\nabla \vu:\nabla^2{\mathcal H}_fg+\Delta \vu\cdot\nabla{\mathcal H}_fg)}-\uline{\nabla{\mathcal H}_fg}\cdot(\vN_f\cdot\uline{\nabla\vu})\\
	&+\frac{\big(-\pa_1'\underline{\vu}\cdot\vN_f-(\pa_2'f)^2\pa_1'\underline{\vu}\cdot\vN_f+\pa_1'f\pa_2'f\pa_2'\uline{\vu}\cdot\vN_f\big)\pa_1'g}{1+(\pa_1'f)^2+(\pa_2'f)^2}\\
	&+\frac{\big(-\pa_2'\underline{\vu}\cdot\vN_f-(\pa_1'f)^2\pa_2'\underline{\vu}\cdot\vN_f+\pa_1'f\pa_2'f\pa_1'\uline{\vu}\cdot\vN_f\big)\pa_2'g}{1+(\pa_1'f)^2+(\pa_2'f)^2}\\
	&+\frac{\pa_1'f\pa_1'\underline{\vu}\cdot\vN_f+\pa_2'f\pa_2'\underline{\vu}\cdot\vN_f}{1+(\pa_1'f)^2+(\pa_2'f)^2}{\mathcal N}_fg,
\end{align*}
where $\Delta^{-1}$ means
\begin{equation}
  \left\{
  	\begin{array}{ll}
  		\Delta\Delta^{-1}g=g&x\in\Omega_f;\\
  		\Delta^{-1}g=0&x\in\Gamma_f;\\
  		\Delta^{-1}g=0&x\in\Gamma^-.
  	\end{array}
  \right.
\end{equation}
Using  $\tilde a=\cH_fa$ to denote the harmonic extension of $a$,  we can write
\begin{align*}
	&\int_{\bbT^2}a\vN_f\cdot\uline{\nabla\Delta^{-1}(2\nabla \vu:\nabla^2{\mathcal H}_fg+\Delta \vu\cdot\nabla{\mathcal H}_fg)}gdx'\\
	=&\int_{\Omega_f}(2\nabla \vu:\nabla^2{\mathcal H}_fg+\Delta \vu\cdot\nabla{\mathcal H}_fg)\tilde a \cH_fgdx\\
	&+\int_{\Omega_f}\nabla\Delta^{-1}(2\nabla \vu:\nabla^2{\mathcal H}_fg+\Delta \vu\cdot\nabla{\mathcal H}_fg)\cdot(\tilde a \nabla \cH_fg+\cH_fg\nabla\tilde a)dx.
\end{align*}
Since $\vu$ is divergence free, one can get
\begin{align*}
	&\int_{\Omega_f}(2\nabla \vu:\nabla^2{\mathcal H}_fg)\tilde a \cH_fgdx\\
	=&2\int_{\Omega_f}(\pa_ru_s\pa_r\pa_s{\mathcal H}_fg)\tilde a \cH_fgdx\\
	=&\int_{\Omega_f}(\pa_ru_s\pa_r\pa_s{\mathcal H}_fg)\tilde a \cH_fgdx+\int_{\bbT^2}(\uline{\nabla{\mathcal H}_fg}\cdot(\vN_f\cdot\uline{\nabla\vu}))a gdx'\\
	&-\int_{\Omega_f}(\pa^2_ru_s\pa_s{\mathcal H}_fg)\tilde a \cH_fgdx-\int_{\Omega_f}\pa_ru_s\pa_s{\mathcal H}_fg\pa_r(\tilde a \cH_fg)dx\\
	=&\int_{\bbT^2}(\uline{\nabla{\mathcal H}_fg}\cdot(\vN_f\cdot\uline{\nabla\vu}))a gdx'-\int_{\Omega_f}(\pa^2_ru_s\pa_s{\mathcal H}_fg)\tilde a \cH_fgdx\\
	&+\int_{\Omega_f}\pa_ru_s\big(\pa_s(\pa_r{\mathcal H}_fg\tilde a \cH_fg)-\pa_r{\mathcal H}_fg\pa_s(\tilde a \cH_fg)-\pa_s{\mathcal H}_fg\pa_r(\tilde a \cH_fg))dx\\
	=&\int_{\bbT^2}(\uline{\nabla{\mathcal H}_fg}\cdot(\vN_f\cdot\uline{\nabla\vu}))a gdx'-\int_{\Omega_f}(\pa^2_ru_s\pa_s{\mathcal H}_fg)\tilde a \cH_fgdx\\
	&+\int_{\bbT^2}(\uline{\pa_r\vu}\cdot\vN_f\uline{\pa_r{\mathcal H}_fg})a gdx'-\int_{\Omega_f}\pa_ru_s\big(\pa_r{\mathcal H}_fg\pa_s(\tilde a \cH_fg)+\pa_s{\mathcal H}_fg\pa_r(\tilde a \cH_fg))dx.
\end{align*}
Analyzing the boundary term carefully, we find that
\begin{align*}
	&\int_{\bbT^2}(\uline{\pa_r\vu}\cdot\vN_f\uline{\pa_r{\mathcal H}_fg)}a gdx'\\
	=&\int_{\bbT^2}\frac{\big(\pa_1'\underline{\vu}\cdot\vN_f+(\pa_2'f)^2\pa_1'\underline{\vu}\cdot\vN_f-\pa_1'f\pa_2'f\pa_2'\uline{\vu}\cdot\vN_f\big)\pa_1'g}{1+(\pa_1'f)^2+(\pa_2'f)^2}a gdx'\\
	&+\int_{\bbT^2}\frac{\big(\pa_2'\underline{\vu}\cdot\vN_f+(\pa_1'f)^2\pa_2'\underline{\vu}\cdot\vN_f-\pa_1'f\pa_2'f\pa_1'\uline{\vu}\cdot\vN_f\big)\pa_2'g}{1+(\pa_1'f)^2+(\pa_2'f)^2}a gdx'\\
	&+\int_{\bbT^2}\frac{\big(\uline{(\vN_f\cdot\nabla)\vu}\cdot\vN_f\big)\cN_fg}{1+(\pa_1'f)^2+(\pa_2'f)^2}a gdx',
\end{align*}
Combining above results, we have
\begin{align*}
	&\int_{\bbT^2}a([D_t,\cN_f]g)gdx'\\
	=&\int_{\Omega_f}\nabla\Delta^{-1}(2\nabla \vu:\nabla^2{\mathcal H}_fg+\Delta \vu\cdot\nabla{\mathcal H}_fg)(\tilde a \nabla g+g\nabla\tilde a)dx\\
	&+\int_{\bbT^2}\uline{\pa_3\vu}\cdot\vN_f\cN_fga gdx'-\int_{\Omega_f}(\nabla\vu+\nabla\vu^\top):\nabla\cH_f g\otimes\nabla(\tilde a\cH_fg)dx.
\end{align*}
Using Lemma \ref{lem-DN-b}, we finally arrive at
\begin{align*}
	\int_{\bbT^2}a([D_t,\cN_f]g)gdx'\le C |a|_{H^{5/2}}|f|^2_{H^4}||\vu||_{H^{4}(\Omega_f)}|g|^2_{H^{1/2}}.
\end{align*}
\end{proof}
\end{appendix}

\section*{Acknowledgment}
Wei Wang is supported by NSF of China under Grant 11871424 and 11922118 and and the Young Elite Scientists
Sponsorship Program by CAST. Zhifei Zhang is partially supported by NSF of China under Grant 11425103.

\end{document}